\newcommand {\debeq}	{\begin{eqnarray*}}
\newcommand {\fineq}	{\end{eqnarray*}}
\newcommand	{\intgen}	
{\int_0^\infty}
\newcommand	{\dtf}	{\bar{d}_f}
\newcommand{\tr}{\mathbbm{t}}
\newcommand{\trT}{\tr^{\{T\}}}
\newcommand{\dottrT}{\dot\tr^{\{T\}}}
\newcommand{\gw}{\mathcal{B}}
\newcommand{\mc}{\mathcal{M}}
\newcommand{\xc}{\mathcal{X}}
\newcommand{\hc}{\mathcal{H}}
\newcommand{\tc}{\mathcal{T}}
\newcommand{\vc}{\mathcal{V}}
\newcommand{\ec}{\mathcal{E}}
\newcommand{\yc}{\mathcal{Y}}
\newcommand{\kc}{\mathcal{K}}
\newcommand{\R}{\mathbb{R}}
\newcommand{\Z}{\mathbb{Z}}
\newcommand{\N}{\mathbb{N}}
\newcommand	{\E}{\mathbb{E}}
\newcommand{\U}{\mathbb{U}}
\newcommand{\Dc}{\mathscr{D}}
\newcommand{\Ec}{\mathscr{E}}
\newcommand{\Jc}{\mathscr{J}}
\newcommand{\Lc}{\mathscr{L}}
\newcommand{\Rc}{\mathscr{R}}
\newcommand{\Tc}{\mathscr{T}}
\newcommand{\Uc}{\mathscr{U}}
\newcommand{\Tn}{\Tc_n}
\newcommand{\Tnl}{\Tc_n^\ell}
\newcommand{\Rn}{\Rc_n}
\newcommand{\Rnl}{\Rc_n^\ell}
\newcommand{\URnl}{\texttt{\rm Unif}_{\Rnl}}
\newcommand{\UTnl}{\texttt{\rm Unif}_{\Tnl}}
\newcommand{\Pnurt}{P_n^{\texttt{\rm urt}}}
\newcommand{\Pnerm}{P_n^{\texttt{\rm erm}}}
\newcommand{\Pnpda}{P_n^{\texttt{\rm pda}}}
\newcommand{\Ppda} [1] {P_{#1}^{\texttt{\rm pda}}}
\newcommand{\qnerm}{q_n^{\texttt{\rm erm}}}
\newcommand{\qnpda}{q_n^{\texttt{\rm pda}}}
\def \Ã©{\'e}
\def \Ã¨{\`{e}}
\def \Ã¹{\`{u}}
\def \Ã®{\^{\i}}
\def \Ã {\`{a}}
\def \Ã§{\c{c}}
\newcommand	{\RR}{\mathbb{R}}
\newcommand	{\PP}{\mathbb{P}}
\newcommand	{\EE}{\mathbb{E}}
\newtheorem	{thm}		{Theorem}[section]
\newtheorem	{dfn}		[thm]{Definition}
\newtheorem	{lem} 	[thm]	{Lemma}
\newtheorem	{prop}	[thm]{Proposition}
\newtheorem     {rem}       [thm]    {Remark}
\newtheorem     {expl}        [thm]   {Example}
\newtheorem     {exo}        [thm]   {Exercise}
\newcommand	{\indic}	[1] {{\mathbbm{1}}_{#1}}
\newcommand	{\indicbis}	[1] {\indic{\{#1\}}}
\definecolor{couleur}{rgb}{0.4,0.8,0.7}
\begin{document}
\thispagestyle{empty}
\large
\noindent

\begin{center}
\textsc{XIX Escola Brasileira de Probabilidade}

\vskip 0.3cm

\textsc{3--8 de Agosto de 2015}

\vskip 0.3cm

\textsc{São Sebastião-SP, Brasil} 

\end{center}
\vskip 2cm

\centerline{\LARGE \bf 
Probabilistic Models for the (sub)Tree(s) of Life
}

\vskip 1cm
\centerline{\Large \bf  Amaury Lambert\footnote{UPMC Univ Paris 06. Please email your comments to \texttt{amaury.lambert@upmc.fr} and check updates at
\texttt{http://www.lpma-paris.fr/pageperso/amaury/}
}}

\vskip 2cm
\paragraph{Abstract:}

The goal of these lectures is to review some mathematical aspects of random tree models used in evolutionary biology to model species trees. 

We start with stochastic models of tree shapes (finite trees without edge lengths), culminating in the $\beta$-family of Aldous' branching models. 

We next introduce real trees (trees as metric spaces) and show how to study them through their contour, provided they are properly measured and ordered. 

We then focus on the reduced tree, or coalescent tree, which is the tree spanned by species alive at the same fixed time. We show how reduced trees, like any compact ultrametric space, can be represented in a simple way via the so-called comb metric. Beautiful examples of random combs include the Kingman coalescent and coalescent point processes. 

We end up displaying some recent biological applications of coalescent point processes to the inference of species diversification, to conservation biology and to epidemiology.

\paragraph{Keywords:} random tree; tree shape; real tree; reduced tree; branching process; coalescent; comb; phylogenetics; population dynamics; population genetics.

\tableofcontents

\chapter{Tree shapes}

Standard references on the topic of this chapter include: \citet{BGBook, StaBook, SSBook}.

\section{Definitions}

In these lecture notes, we call \emph{tree shape} any rooted, finite tree (no edge lengths, no plane embedding). A finite tree  $\tau$ is an acyclic graph $(\vc, \ec)$ where $\vc=\vc(\tau)$ is a finite set whose elements are called the \emph{vertices} or \emph{nodes} of $\tau$, and $\ec =\ec (\tau)$ is a subset of $\vc\times \vc$ whose elements are (not ordered and) called the \emph{edges} of $\tau$. The root of $\tau$ is a distinguished vertex of $\tau$.\\
\\ 
We will also use the following terminology.
\begin{itemize}
\item
Degree. The degree of a node $u$ of the tree shape $\tau$ is the number of its neighbors, where a neighbor of $u$ is an element $v$ of $\tau$ such that $(u,v)$ is an edge of $\tau$. 
We will always assume that the root has degree 1. 

\item
Partial order. If $u$ and $v$ are two nodes of $\tau$, we say that $v$ is descending from $u$ and we write $u\preceq v$, if $v$ is not in the same connected component of $\tau\setminus \{u\}$ as the root. The relation $\preceq$ is a (partial) order on the vertex set of $\tau$.
  
\item Tips/leaves. All nodes with degree 1 but the root, are called \emph{tips} or \emph{leaves}. All other nodes (including the root) are called \emph{internal nodes} (or internal vertices). 

\item
A tree shape is said \emph{binary} when each of its internal nodes (but the root) has degree 3.
\end{itemize}
In the next chapter, we will introduce a framework not needed at this stage, known as Ulam--Harris--Neveu coding, in which finite trees can be embedded into the set $\Uc$ of finite words, where a word $v$ descends from a word $u$ if $u$ is a prefix of $v$.

We also mention that in combinatorial phylogenetics, a tree whose tips are labelled by some finite set $X$ is usually identified as a so-called \emph{$X$-hierarchy} (\citealt{SSBook}). An $X$-hierarchy is a collection $\hc$ of subsets of $X$ containing all singletons and such that for any $A,B\in\hc$, $A\cap B\in\{\varnothing, A, B\}$. 
\begin{exo}
Display the labelled tree coded by the $\{1,2,3,4,5\}$-hierarchy $\hc=\{\{1\}, \{1,4\}, \{1,4,5\}, \{2\}, \{2,3\}, \{3\}, \{4\}, \{5\}\}$.
\end{exo}

\section{Combinatorics warm up}

\subsection{Counting trees}

\begin{dfn}
We let $\Tn$ denote the set of all binary tree shapes with $n$ tips, and $\Tnl$ the set of binary tree shapes with $n$ tips labelled by $\{1,\ldots, n\}$. The elements of $\Tnl$ are called \emph{cladograms} or \emph{labelled tree shapes} (with $n$ tips). 
\end{dfn}
\begin{dfn}
For any (labelled or not) tree shape $\tau$, we call \emph{radial order}  any total order $\sqsubset$ on the internal nodes of $\tau$ respecting the genealogical order. In other words, $\sqsubset$ is a radial order if for any internal nodes $u$ and $v$ of $\tau$,
$$
u\preceq v\Rightarrow u\sqsubset v,
$$
where we recall that $u\preceq v$ means that $v$ is descending from $u$. 
\end{dfn}
\noindent
Note that the minimal element in a radial order is always the root.

\begin{expl}
The archetypal example of radial order is the order in which splits occur through continuous time in a tree produced by a birth--death process. Note that this is specific to continuous time, since in discrete time there are always several nodes with the same generation (i.e., the same graph distance to the root). 
\end{expl}
\begin{dfn}
We let $\Rn$ denote the set of all binary tree shapes with $n$ tips endowed with a radial order. The elements of $\Rn$ are called \emph{ranked tree shapes} (with $n$ tips). 

Also, we let $\Rnl$ denote the set of ranked tree shapes with $n$ tips labelled by $\{1,\ldots, n\}$. The elements of $\Rnl$ are called \emph{labelled, ranked tree shapes} (with $n$ tips). 
\end{dfn}
\noindent
There is a canonical surjection $\ell:\Tnl\rightarrow \Tn$ mapping each labelled tree shape to the same tree shape without labels. We use the same notation for the surjection $\ell:\Rnl\rightarrow \Rn$. 

Similarly, there is a canonical surjection $r:\Rn\rightarrow \Tn$ mapping each ranked tree shape to the same tree shape without radial order. We use the same notation for the surjection $r:\Rnl\rightarrow \Tnl$. 

The mappings $\ell$ and $r$ are sometimes called the \emph{forgetful maps}, because they consist in `forgetting' the labels or the ranks, respectively.
It is obvious that $\ell\circ r = r\circ\ell$, so that the following diagram commutes.
\[
\begin{tikzcd}
\Huge
\Rnl \arrow{r}{r} \arrow[swap]{d}{\ell} & \Tnl \arrow{d}{\ell} \\
\Rn  \arrow{r}{r} & \Tn
\end{tikzcd}
\]
We now give explicit expressions for the cardinal numbers of $\Tnl$ and $\Rnl$.
\begin{prop}[\citealt{Mur84}] 
\label{prop:cardinals}
For each $n\ge 2$, set $t_n:=\#\Tnl$ and $r_n:=\#\Rnl$. Then
$$
t_n= (2n-3)!!
:=(2n-3)(2n-5)\cdots(3)(1)
$$
$$
r_n=\frac{n!\,(n-1)!}{2^{n-1} }
$$
\end{prop}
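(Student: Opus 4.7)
The plan is to prove both formulas by induction on $n$, by exhibiting explicit bijections that strip off one leaf (for $t_n$) or one coalescence event (for $r_n$) from a tree in the set to a smaller tree, and counting the fibers.

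For the first formula $t_n = (2n-3)!!$, the base case $t_2 = 1$ is immediate: there is only one labelled binary tree shape on $\{1,2\}$, namely the Y-shape consisting of the root-edge followed by the split into leaves $1$ and $2$. For the inductive step, I would define a map $\Tnl \to \Tc_{n-1}^\ell$ that removes leaf $n$ together with its pendant edge and then suppresses the resulting degree-$2$ internal node by merging its two incident edges. A binary tree on $n-1$ labelled tips has $n-1$ leaves, $n-2$ branching internal nodes of degree $3$, and one root of degree $1$, for a total of $2n-2$ vertices and hence $2n-3$ edges. Conversely, any element of $\Tc_{n-1}^\ell$ can be recovered from exactly $2n-3$ trees in $\Tnl$, one for each edge on which leaf $n$ could have been inserted. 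This gives $t_n = (2n-3)\, t_{n-1}$, and the formula $(2n-3)!!$ follows by induction.

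For the second formula $r_n = n!(n-1)!/2^{n-1}$, the idea is to read the tree backwards in radial order as a coalescence process. Given $T \in \Rnl$ with internal nodes listed in radial order as $w_1 \sqsubset w_2 \sqsubset \cdots \sqsubset w_{n-1}$, I set $v_k := w_{n-k}$ so that $v_1$ is the most tip-ward internal node and $v_{n-1}$ is the root. Starting from the $n$ leaves as lineages and proceeding through $v_1, v_2, \ldots, v_{n-1}$, at step $k$ the merge creating $v_k$ picks an unordered pair among the $n-k+1$ currently active lineages, which gives $\binom{n-k+1}{2}$ independent choices; the radial order on the internal nodes is recovered from the order in which these merges are performed. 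This produces a bijection between $\Rnl$ and the Cartesian product of choice sets of sizes $\binom{n}{2}, \binom{n-1}{2}, \ldots, \binom{2}{2}$, whence
\[
r_n = \prod_{k=1}^{n-1} \binom{n-k+1}{2} = \prod_{j=2}^{n} \frac{j(j-1)}{2} = \frac{n!(n-1)!}{2^{n-1}}.
\]

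The only genuine point to verify in either argument is the bijectivity of the stated operations: that inserting leaf $n$ on any of the $2n-3$ edges produces every tree in $\Tnl$ exactly once, and that the sequence of pair-merges recovers the underlying tree shape, the labelling and the radial order uniquely. In both cases this is straightforward once the definitions are unfolded, so there is no real obstacle; the proposition is essentially a careful bookkeeping exercise around the conventions that the root has degree $1$ and that internal branching nodes have degree $3$.
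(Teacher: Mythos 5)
Your proof of the first formula is essentially the paper's argument, just phrased as leaf removal $\Tnl \to \Tc_{n-1}^\ell$ rather than leaf insertion $\Tc_{n+1}^\ell \to \Tnl$; both amount to the edge count $2n-3$ (resp. $2n-1$) and the resulting recursion.

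For the second formula you take a genuinely different route. The paper mirrors its argument for $t_n$: it again considers the projection $p_n:\Rc_{n+1}^\ell\to\Rnl$ obtained by deleting the leaf labelled $n+1$, and computes the fiber size by noting that the new internal node can be inserted in one of $n$ possible positions of the radial order, and in position $k$ one of $k$ edges may be subdivided, giving $\#p_n^{-1}(\{\tau\})=1+\cdots+n=n(n+1)/2$ and the recursion $r_{n+1}=n(n+1)r_n/2$. You instead encode a ranked labelled tree directly as a coalescence history: a sequence of $n-1$ unordered pair-merges starting from the $n$ singleton lineages, with $\binom{n-k+1}{2}$ choices at step $k$, giving the product $\prod_{j=2}^n\binom{j}{2}$ in closed form with no recursion. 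Both are correct. The paper's version has the pedagogical merit of treating $\Tnl$ and $\Rnl$ by the same insertion device, and of extracting the quantity $n(n+1)/2$ that reappears in the subsequent proof that $\kc_n=\URnl$. Your version makes the uniformity of the Kingman $n$-coalescent on $\Rnl$ (proved a few pages later in the text) visible already here, since the bijection is exactly the space of coalescent histories; it also avoids the slightly fiddly step of justifying that in radial position $k$ exactly $k$ edges are available. The one point you flag as "straightforward once unfolded" --- that the pair-merge data determines the shape, labels and ranking uniquely, and that every element of $\Rnl$ arises exactly once --- is indeed routine, but worth stating: the ranking is read off as the reverse of the merge order, and injectivity/surjectivity follow because the highest-ranked internal node of any $\tau\in\Rnl$ necessarily subtends a cherry, which is the pair merged first.
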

\begin{rem} No explicit expression is known for $\#\Tn$ (sometimes called Wedderburn-Etherington number) or $\#\Rn$.
\end{rem}
\begin{proof}
We reason by induction.
Let $p_n:\Tc_{n+1}^\ell\rightarrow \Tnl$ denote the projection which maps each $\tau$ with $n+1$ labelled tips to the tree spanned by the tips of $\tau$ carrying labels in $\{1,\ldots,n\}$. 
Note that 
$$
t_{n+1} = \sum_{\tau\in \Tnl}\#p_n^{-1}(\{\tau\}).
$$
Now let us compute $\#p_n^{-1}(\{\tau\})$.
It is immediate that each tree shape with $n$ labelled tips $\tau\in \Tnl$ has $n$ external edges, $n-2$ internal edges and a root edge. This gives $2n-1$ distinct locations where to grow a new external edge carrying the label $n+1$, which means that $\#p_n^{-1}(\{\tau\})=2n-1$. In conclusion,  $t_{n+1} = (2n-1) t_n$ and the first result follows, since $t_2=1$.

For ranked tree shapes, we can similarly consider the projection (still denoted) $p_n:\Rc_{n+1}^\ell\rightarrow \Rnl$ and use the similar equality
$$
r_{n+1} = \sum_{\tau\in \Rnl}\#p_n^{-1}(\{\tau\}).
$$
Now let us compute $\#p_n^{-1}(\{\tau\})$.
Each ranked tree shape with $n$ labelled tips $\tau\in \Rnl$ has $n-1$ ordered internal nodes. Growing a new edge with label $n+1$ requires to add a new internal node. There are $n$ distinct locations where to insert it in the radial order, say between the $(k-1)$-st internal node and the $k$-th internal node for $1\le k\le n$. For a given $k$, there are $k$ distinct edges where to grow the new edge, which means  $\#p_n^{-1}(\{\tau\}) = 1+\cdots+n=n(n+1)/2$. In conclusion, $r_{n+1} =   n(n+1) r_n/2$ and the second result follows, since $r_2=1$.
 \end{proof}

\begin{exo}
Check that $t(x):=\sum_{n\ge 1} t_n\displaystyle \frac{x^n}{n!}$ is well defined for $|x|<\frac12$.
By combinatorial arguments, prove that for $n\ge 2$
$$
t_n=\frac 1 2 \sum_{i=1}^{n-1} {n\choose i} t_i\, t_{n-i}
$$
and deduce that $t(x) = x +\frac12 t\left(x\right)^2$.
Conclude that $t(x)=1-\sqrt{1-2x}$.
\end{exo}

\begin{exo}
A tree shape is said \emph{oriented} when each internal vertex has a \emph{left} and a \emph{right} descending subtree. Each orientation gives a tree a unique plane embedding. 
Prove that the number of ranked oriented trees with $n$ tips is $(n-1)!$ and that the number of ranked, oriented trees with $n$ labelled tips is $n!\, (n-1)!$. Also if $o$ denotes the map forgetting orientation, check that for any $\tau\in\Rnl$, $o^{-1}(\{\tau\}) = 2^{n-1}$. This gives another proof of the formula $r_n= \frac{n!\,(n-1)!}{2^{n-1} }$.
\end{exo}

\subsection{Counting rankings and labellings}

\begin{dfn}
For any $\tau\in \Tn$, an internal node $u$ of $\tau$ is said \emph{symmetric} if the two subtrees descending from $u$ (i.e., the two connected components of $\tau\setminus \{u\}$ not containing the root) are identical. 
A particular case of symmetric node is when $u$ subtends (i.e., is the most recent common ancestor of) a \emph{cherry}, that is $u$ only subtends (two) tips.

We denote by $s(\tau)$ the number of symmetric nodes of $\tau$, and by $c(\tau)$ the number of cherries of $\tau$. 

\end{dfn}
\noindent
Assume we were to extend the notion of symmetric node to labelled (resp. ranked) tree shapes, in the sense that the two descending subtrees of a symmetric node should have not only the same shape but also the same tip labels (resp. the same internal node ranks). Then in a labelled or ranked tree shape, only cherries would be symmetric. 
This explains the following convention. 
\begin{dfn}
For any $\tau\in \Tnl\cup\Rn\cup\Rnl$, we denote invariably by $s(\tau)$ or $c(\tau)$ the number of cherries of $\tau$.
\end{dfn}



\begin{exo}
Prove that for any unlabelled tree shape $\tau$ with $n$ tips  ($\tau\in\Tn\cup \Rn$), the number of distinct labellings of $\tau$ is
\begin{equation}
\label{eqn:l-1}
\#\ell^{-1}(\{\tau\}) = 2^{-s(\tau)}\, n!
\end{equation}
where we recall that if $\tau$ is ranked ($\tau\in \Rn$), $s(\tau)$ is the number of cherries of $\tau$ (see previous discussion).
\end{exo}

\begin{dfn}
For any tree shape $\tau$ (labelled or not, ranked or not), for any vertex $v\in \vc(\tau)$, if $\tau'$ denotes the subtree descending from $v$, we denote invariably by $\lambda(v)$ or by $\lambda(\tau')$ the number of leaves subtended by $v$, which is also the number of leaves of $\tau'$.
\end{dfn}
\noindent
Note that $\lambda(v)=1$ iff $v$ is a leaf.\\
\\
Eq \eqref{eqn:l-1} gives an explicit expression for the number of distinct labellings of a given tree shape $\tau$. The next statement gives the number of distinct rankings of any given (either labelled or not labelled) tree shape. 
\begin{prop}[\citealt{KnuBook}]
\label{prop:r-1}
For any unranked tree shape $\tau$ with $n$ tips ($\tau\in\Tn\cup\Tnl$), the number of distinct rankings of $\tau$ is
\begin{equation}
\label{eqn:r-1}
\#r^{-1}(\{\tau\}) =2^{c(\tau)-s(\tau)} \,\frac{(n-1)!}{\prod_{v\in\mathring\vc(\tau)}(\lambda(v)-1)},
\end{equation}
where $\mathring\vc(\tau)$ denotes the set of internal vertices of $\tau$. Recall that when $\tau$ is labelled ($\tau\in \Tnl$), $s(\tau)=c(\tau)$.
\end{prop}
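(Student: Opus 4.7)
My plan is to prove the labelled case first (a hook-length identity for rooted binary trees) by induction, and then deduce the unlabelled statement by double counting along the commutative square just displayed.

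For $\tilde\tau\in\Tnl$, the convention gives $s(\tilde\tau)=c(\tilde\tau)$, so the formula to prove simplifies to
\[
\#r^{-1}(\{\tilde\tau\}) \;=\; \frac{(n-1)!}{\prod_{v\in\mathring\vc(\tilde\tau)}(\lambda(v)-1)},
\]
where, as the formula forces, the product is understood to run over the branching (degree-three) internal vertices of $\tilde\tau$; the degree-one root always carries the smallest rank in any radial order and contributes no factor. The base case $n=2$ (one cherry, one ranking) is immediate. For the inductive step, let $v_0$ be the topmost branching vertex with hanging subtrees $T_1,T_2$ of $k$ and $n-k$ leaves respectively. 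Since $v_0$ precedes all its descendants, it must take the smallest rank among branching vertices, and the remaining $n-2$ positions are filled by an arbitrary interleaving of a radial order of $T_1$'s $k-1$ branching vertices with one of $T_2$'s $n-k-1$, giving $\binom{n-2}{k-1}$ interleavings. Applying the induction hypothesis to $T_1,T_2$, the binomial coefficient cancels the factorials $(k-1)!$ and $(n-k-1)!$, leaving $(n-2)!/\bigl[\prod_{v\in T_1}(\lambda(v)-1)\prod_{v\in T_2}(\lambda(v)-1)\bigr]$; since $\lambda(v_0)-1=n-1$, multiplying numerator and denominator by $n-1$ re-assembles $(n-1)!$ in the numerator and completes the product over $\mathring\vc(\tilde\tau)$ in the denominator, closing the induction.

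For the unlabelled case, I would enumerate $\#(\ell\circ r)^{-1}(\{\tau\})\subset\Rnl$ in two ways via the commutative square. Traversing through $\Tnl$, the labelled formula together with \eqref{eqn:l-1} gives
\[
\sum_{\tilde\tau\in\ell^{-1}(\{\tau\})}\#r^{-1}(\{\tilde\tau\})\;=\;2^{-s(\tau)}\, n! \cdot \frac{(n-1)!}{\prod_{v\in\mathring\vc(\tau)}(\lambda(v)-1)},
\]
since the ratio depends only on the underlying shape $\tau$. Traversing through $\Rn$ gives instead $\sum_{\rho\in r^{-1}(\{\tau\})}\#\ell^{-1}(\{\rho\})$; for any such $\rho$ one has $c(\rho)=c(\tau)$ (the cherry set is determined by the unranked shape) and $s(\rho)=c(\rho)$ by the convention for ranked shapes, so \eqref{eqn:l-1} gives $\#\ell^{-1}(\{\rho\})=2^{-c(\tau)}n!$ and the sum equals $\#r^{-1}(\{\tau\})\cdot 2^{-c(\tau)}n!$. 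Equating the two expressions and dividing by $n!$ produces the announced factor $2^{c(\tau)-s(\tau)}$.

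The only delicate point is the bookkeeping at the top of the hook-length step: the degree-one root must not contribute to $\prod_v(\lambda(v)-1)$ (otherwise one picks up a spurious factor $n-1$), whereas $v_0$ does contribute $\lambda(v_0)-1=n-1$, precisely the factor that upgrades $(n-2)!$ into $(n-1)!$. Once that convention is settled, the induction is short and the reduction from the unlabelled to the labelled case is a transparent double counting driven only by the invariance of $c$ under the forgetful map $r$.
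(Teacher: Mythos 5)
Your proof is correct and follows essentially the same route as the paper: an inductive hook-length computation for the labelled case (decomposing at the first branching vertex, counting interleavings by a binomial coefficient, and telescoping), followed by double-counting $\#(\ell\circ r)^{-1}(\{\tau\})$ along both legs of the commutative square to pass to unlabelled shapes. The only cosmetic difference is that you index the subtrees by number of leaves rather than number of internal vertices; your explicit remark that the degree-one root contributes no factor to $\prod_{v\in\mathring\vc(\tau)}(\lambda(v)-1)$ is consistent with how the paper silently uses $\#\mathring\vc(\tau)=\lambda(\tau)-1$.
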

\begin{proof}
First note that the total number of internal vertices of $\tau$ is $\#\mathring\vc(\tau)=\lambda(\tau)-1$. 

Now assume that $\tau\in \Tnl$ and let $\tau'$ and $\tau''$ denote the two labelled subtrees descending from the root of $\tau$ (say for example that $\tau'$ is the subtree containing the tip with label 1). Let $k$ denote the number of internal vertices of $\tau'$. Assuming that the $k$ internal nodes of $\tau'$ are ordered and that the $n-k-2$ internal nodes of $\tau''$ are ordered, the number of ways of ordering the internal nodes of $\tau'$ and $\tau''$ with respect to each other is ${n-2}\choose k$. In conclusion,
$$
\#r^{-1}(\{\tau\}) = {{n-2}\choose k} \#r^{-1}(\{\tau'\})\, \#r^{-1}(\{\tau''\}),
$$
which also reads
$$
\frac{\#r^{-1}(\{\tau\})}{(n-1)!} = \frac{1}{n-1} \,\frac{\#r^{-1}(\{\tau'\})}{k!}\, \frac{\#r^{-1}(\{\tau''\})}{(n-2-k)!}.
$$
An immediate induction yields
$$
\frac{\#r^{-1}(\{\tau\})}{(n-1)!}=\prod_{v\in\mathring\vc(\tau)}\frac{1}{\lambda(v)-1},
$$
which is the expected result for labelled tree shapes.

Now assume that $\tau\in\Tn$. The following proof relies on the equality 
$$
\#\ell^{-1}(r^{-1}(\{\tau\})) = \#r^{-1}(\ell^{-1}(\{\tau\})),
$$
due to the fact that $\ell$ and $r$ commute.

First, let $\tilde\tau\in\Rn$ such that $r(\tilde\tau)=\tau$. 
Recall from Eq \eqref{eqn:l-1} that the number of distinct labellings of $\tilde\tau$ is $2^{-s(\tilde\tau)}\,n!= 2^{-c(\tau)}\,n!$, which yields
$$
\#\ell^{-1}(r^{-1}(\{\tau\}))= \#r^{-1}(\{\tau\})\,2^{-c(\tau)}\,n! 
$$
Second, let $\bar\tau\in\Tnl$ such that $\ell(\bar\tau)=\tau$. From what precedes, we know that   
$$
\#r^{-1}(\{\bar\tau\})=(n-1)!\prod_{v\in\mathring\vc(\bar\tau)}\frac{1}{\lambda(v)-1} =(n-1)!\prod_{v\in\mathring\vc(\tau)}\frac{1}{\lambda(v)-1},
$$
which yields
$$
\#r^{-1}(\ell^{-1}(\{\tau\}))=  \#\ell^{-1}(\{\tau\})(n-1)!\prod_{v\in\mathring\vc(\tau)}\frac{1}{\lambda(v)-1} = 2^{-s(\tau)}\,n!\,(n-1)!\prod_{v\in\mathring\vc(\tau)}\frac{1}{\lambda(v)-1},
$$
thanks again to Eq \eqref{eqn:l-1}.
Equalling the expression for $\#\ell^{-1}(r^{-1}(\{\tau\}))$ and the expression for $\#r^{-1}(\ell^{-1}(\{\tau\}))$ provides the final result. 
 \end{proof}

\section{Random tree shapes}

\subsection{Uniform distributions}

\begin{dfn}
Let $\UTnl$ and $\URnl$ denote  the uniform distributions on $\Tnl$ and $\Rnl$ respectively.

We will adopt the following notation.
The distributions  $\Pnpda$ and $\Pnerm$ are the probabilities on $\Tnl$ defined as
$$
\Pnpda:=\UTnl\quad \mbox{ and } \quad \Pnerm = \URnl\circ r^{-1}.
$$
The distribution $\Pnurt$ is the probability on $\Rn$ defined as
$$
\Pnurt:=\URnl\circ \ell^{-1}
$$
\end{dfn}
\noindent
Note that $\Pnurt\circ r^{-1} =\Pnerm\circ \ell^{-1}$ is the push forward of $\URnl$ by $\ell\circ r$.
\begin{rem}
The preceding denominations come from the terminology used in the phylogenetics literature (\citealt{Ald96, Ald01, BF06, Bro94, LS13, SSBook}), where these three TLAs\footnote{Three-Letter Acronym} have the following meanings.
\begin{itemize}
\item PDA stands for `proportional to distinguishable arrangements',
\item ERM stands for `equal rates Markov',
\item URT stands for `uniform on ranked (labelled) trees'.
\end{itemize}
\end{rem}
\noindent
Thanks to Proposition \ref{prop:cardinals}, and to Equations \eqref{eqn:l-1} and \eqref{eqn:r-1}, here are the probabilities of any given tree $\tau$ under various of the previously defined distributions.
\noindent
For any $\tau\in \Tnl$,
$$
\Pnpda(\tau)= \frac{1}{t_n} = \frac{1}{(2n-3)!!}
$$ 
and
$$
\Pnerm (\tau) = \frac{\#r^{-1}(\{\tau\})}{r_n}=\frac{2^{n-1}}{n!}\prod_{v\in\mathring\vc(\tau)}\frac{1}{\lambda(v) -1} 
$$
For any $\tau\in\Rn$,
$$
\Pnurt(\tau) = \frac{\#\ell^{-1}(\{\tau\})}{r_n} = \frac{2^{n-1-c(\tau)}}{(n-1)!}
$$
For any $\tau\in \Tn$,
$$
\Pnpda\circ\ell^{-1}(\tau)= \frac{\#\ell^{-1}(\{\tau\})}{t_n} =\frac{n!\,2^{-s(\tau)}}{t_n}= \frac{2^{n-1-s(\tau)}}{c_{n-1}},
$$ 
where $c_k:=\displaystyle \frac{1}{k+1} {2k \choose k}$ is the $k$-th Catalan number, and
$$
\Pnerm\circ\ell^{-1} (\tau) =\#\ell^{-1}(\{\tau\})\frac{2^{n-1}}{n!}\prod_{v\in\mathring\vc(\tau)}\frac{1}{\lambda(v) -1}=\frac{2^{n-1-s(\tau)}}{\prod_{v\in\mathring\vc(\tau)}(\lambda(v) -1)}
$$

\begin{figure}[!ht]
\includegraphics[width=\textwidth]{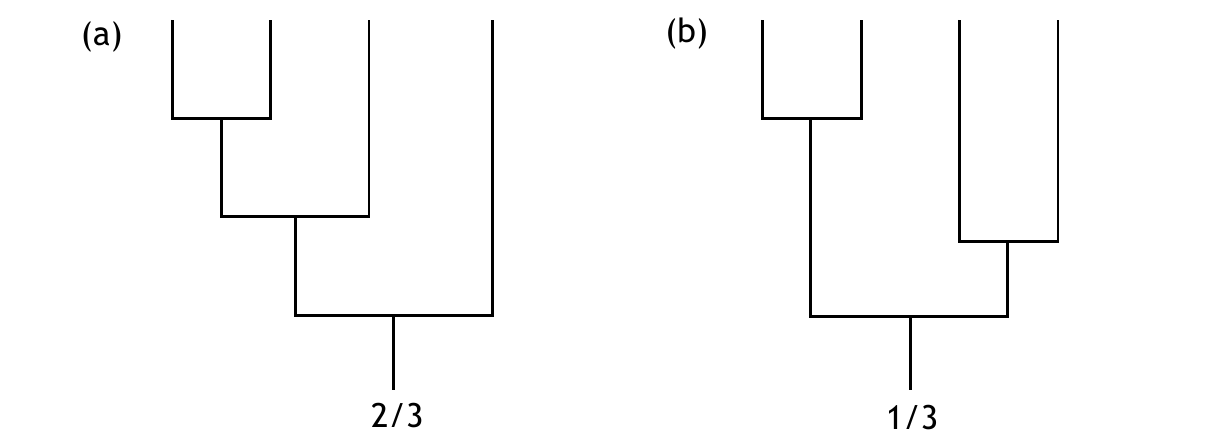}
\caption{Probabilities of trees shapes with $n=4$ tips under $\Pnurt$.
(a) 
the caterpillar tree with 4 tips has only one conforming ranked shape, with probability $2/3$;
(b) 
the symmetric tree with $4$ tips also has one conforming ranked shape, with probability $1/3$. 
}
\label{fig:fourtips}
\end{figure}

\begin{figure}[!ht]
\includegraphics[width=\textwidth]{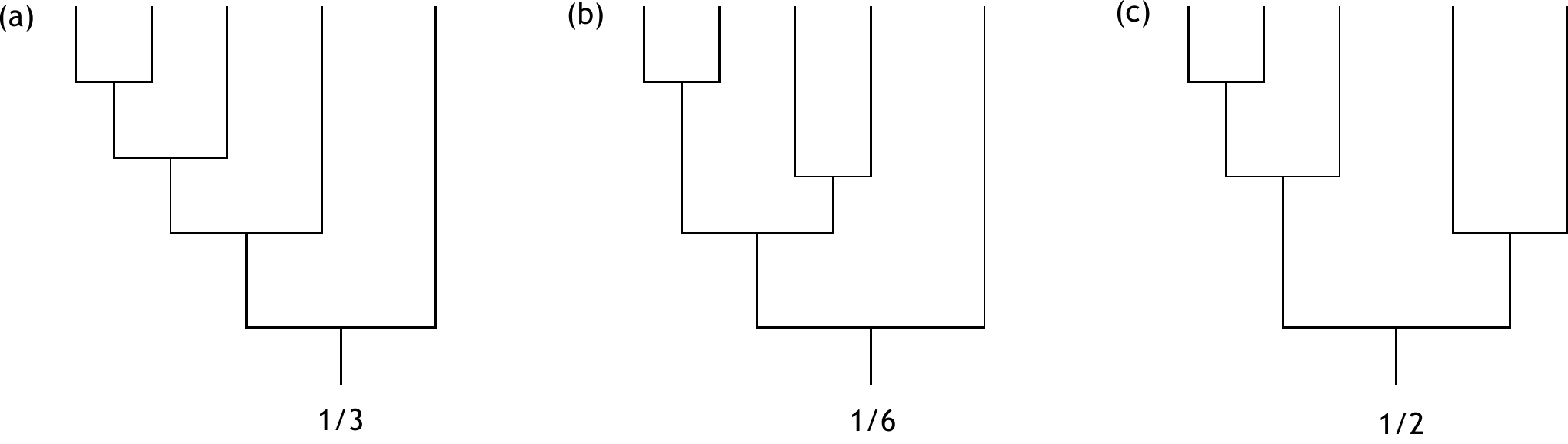}
\caption{Probabilities of trees shapes with $n=5$ tips under $\Pnurt$.
(a) 
is the caterpillar tree with 5 tips, which has only one conforming ranked shape, with probability $1/3$;
(b) 
has one conforming ranked shape with probability $1/6$;
(c) 
has 3 conforming ranked shapes, each with probability $1/6$.
}
\label{fig:fivetips}
\end{figure}

\begin{figure}[!ht]
\includegraphics[width=\textwidth]{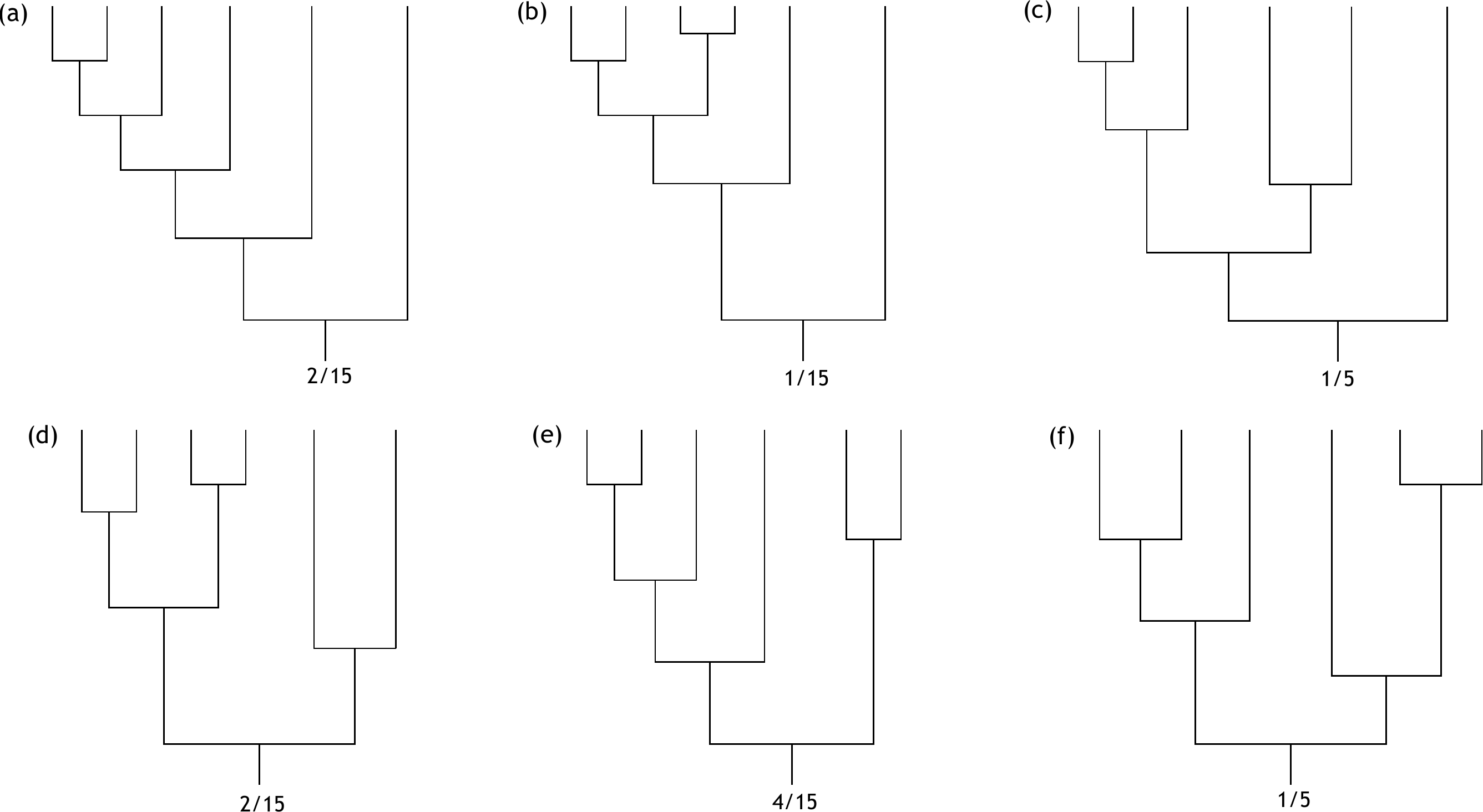}
\caption{Probabilities of trees shapes with $n=6$ tips under $\Pnurt$.
(a) 
is the caterpillar tree with 6 tips, which has only one conforming ranked shape, with probability $2/15$;
(b) 
 has one conforming ranked shape with probability $1/15$;
(c) 
has 3 conforming ranked shapes, each with probability $1/15$; 
(d) 
has 4 conforming ranked shapes, each with probability $1/30$; 
(e) 
has 4 conforming ranked shapes, each with probability $1/15$;
 (f) 
 has 3 conforming ranked shapes, each with probability $1/15$.
}
\label{fig:sixtips}
\end{figure}

\begin{exo}
We will see that the tree generated by a Yule (pure-birth) process stopped upon reaching $n$ particles, has the law $\Pnurt$ of the uniform labelled, ranked tree shape after ignoring labels. Check the computations shown for the Yule tree with $n$ tips ($n=4,5,6$) in Figures \ref{fig:fourtips}, \ref{fig:fivetips} and \ref{fig:sixtips}.
\end{exo}

\subsection{The law of your favourite random tree}

\subsubsection{The Bienaymé--Galton--Watson tree shape}

 Let $\gw$ stand for the law of a binary (unlabelled) Galton--Watson tree with parameter $p\in(0,1)$, where $p$ is the probability of begetting two offspring, and $\gw_n$ the law of the same tree conditioned to have $n$ leaves. Set $\sigma_n$ the probability of having $n$ leaves under $\gw$
$$
\sigma_n:=\gw(\lambda = n),
$$ 
so that for each $\tau\in\Tn$,
$$
\gw_n(\tau) = \frac{\gw(\tau)}{\sigma_n}.
$$
Now I will leave the reader convince herself (by drawing an example, say the two different binary trees with 4 leaves; or more rigorously, proving that the total number of plane orientations of an unlabelled, unranked tree shape $\tau$ is $2^{n-1-s(\tau)}$) 
 that 
\begin{equation}
\label{eqn:gw}
\gw(\tau) = 2^{n-1-s(\tau)}\left[
\prod_{v\in\mathring\vc(\tau)}p \right]
\left[
\prod_{u\text{ tip of }\tau} (1-p)\right] = 2^{n-1-s(\tau)} \,p^{n-1}\,(1-p)^n .
\end{equation}
Now thanks to Eq \eqref{eqn:l-1}, we know that
$$
t_n=\sum_{\tau\in\Tn}\ell^{-1}(\{\tau\}) = \sum_{\tau\in\Tn}2^{-s(\tau)}\,n!
$$
so that
\begin{equation}
\label{eqn:gw-nbtips}
\sigma_n=\sum_{\tau\in\Tn}2^{n-1-s(\tau)} \,p^{n-1}\,(1-p)^n = \frac{2^{n-1}\,t_n}{n!} \,p^{n-1}\,(1-p)^n.
\end{equation}
In conclusion, for each $\tau\in\Tn$,
$$
\gw_n(\tau) = \frac{\gw(\tau)}{\sigma_n} = \frac{n!\,2^{-s(\tau)}}{t_n}=\frac{\#\ell^{-1}(\{\tau\})}{t_n}=\Pnpda\circ\ell^{-1}(\tau),
$$
which can be recorded in the following statement
\begin{prop}
\label{prop:gw}
For each integer $n\ge 2$, $\gw_n=\Pnpda\circ\ell^{-1}$.
\end{prop}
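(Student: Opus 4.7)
The plan is to simply string together the three displayed computations that already appear in the excerpt, making sure each one is justified. The proof proceeds in three steps.

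First, I would compute $\gw(\tau)$ for a fixed $\tau\in\Tn$. The Galton--Watson construction naturally produces an \emph{oriented} (plane) tree: at each reproduction event the two offspring come in an order. So for a given unordered shape $\tau$ with $n$ tips, $\gw(\tau)$ equals the number of distinct plane embeddings of $\tau$, times the weight of any one such embedding. The weight of a fixed plane embedding is $p^{n-1}(1-p)^n$, since $\tau$ has $n-1$ internal non-root vertices (each contributing $p$) and $n$ tips (each contributing $1-p$). The number of plane embeddings is $2^{n-1-s(\tau)}$: each non-symmetric internal node admits two choices of left/right ordering of its descending subtrees, while each symmetric node admits only one. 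This yields Eq.~\eqref{eqn:gw}. This is the step that requires the most care; it is essentially the only place where one has to think (the count $2^{n-1-s(\tau)}$, together with the convention that the root has degree $1$ and so contributes no factor).

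Second, I would compute $\sigma_n$ by summing \eqref{eqn:gw} over $\tau\in\Tn$. The factor $p^{n-1}(1-p)^n$ is constant in $\tau$, and the sum $\sum_{\tau\in\Tn}2^{n-1-s(\tau)}$ is evaluated via Eq.~\eqref{eqn:l-1}: namely $\sum_{\tau\in\Tn}2^{-s(\tau)} = t_n/n!$, since the fibres of $\ell:\Tnl\to\Tn$ have size $\#\ell^{-1}(\{\tau\})=2^{-s(\tau)}n!$ and together they cover $\Tnl$, which has cardinality $t_n$. This gives Eq.~\eqref{eqn:gw-nbtips}.

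Third, I would divide: by definition $\gw_n(\tau)=\gw(\tau)/\sigma_n$, and after cancellation of $p^{n-1}(1-p)^n$ and of $2^{n-1}$ the result is
\[
\gw_n(\tau)=\frac{n!\,2^{-s(\tau)}}{t_n}=\frac{\#\ell^{-1}(\{\tau\})}{t_n}.
\]
On the other hand, $\Pnpda=\UTnl$ is uniform on a set of cardinality $t_n$, so its pushforward under $\ell$ gives mass $\#\ell^{-1}(\{\tau\})/t_n$ to each $\tau\in\Tn$. The two expressions coincide, which is the claim.

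No real obstacle is present; the only non-routine point is the combinatorial identification in Step 1 of the number of plane embeddings of $\tau$ with $2^{n-1-s(\tau)}$, which is exactly the fact the author asks the reader to verify just before Eq.~\eqref{eqn:gw}.
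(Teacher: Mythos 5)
Your proof is correct and follows precisely the route the paper takes: establish Eq.~\eqref{eqn:gw} via the count $2^{n-1-s(\tau)}$ of plane embeddings, sum over $\Tn$ using Eq.~\eqref{eqn:l-1} to get Eq.~\eqref{eqn:gw-nbtips}, then divide and identify the quotient with $\Pnpda\circ\ell^{-1}(\tau)=\#\ell^{-1}(\{\tau\})/t_n$. The only difference is that you spell out the justification of $2^{n-1-s(\tau)}$, which the paper leaves to the reader (and gives an independent derivation of later via Markov branching models on p.~\pageref{BGW-back}).
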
   
\noindent
If the reader is not convinced that \eqref{eqn:gw} does hold, another proof will be given in the context of Markov branching models page \pageref{BGW-back}.

Notice that $\gw_n$ does not depend on $p$. Also note for the record that the probability of a given labelled tree shape under the Galton--Watson model with uniform labelling is $\frac{2^{n-1}}{n!} p^{n-1}(1-p)^n$ (where $n$ is its number of tips).

\subsubsection{The Yule tree shape}

The reason why $\URnl\circ r^{-1}$ is denoted $\Pnerm$ is that it is the law of the (uniformly labelled) tree shape given by the genealogy of a population where all particles split independently and at the same rate $b$, called birth rate, into two new particles (more details to come in Chapter \ref{chap:reduced}). The process counting the size of the population is a Markov process jumping from $k$ to $k+1$ at rate $bk$ and is usually called pure birth process, or Yule (sometimes Yule--Furry) process (see \citealt{Lam08} for an introduction to stochastic models of population dynamics and genealogies). \\
\\
More specifically let $\yc_n$ denote the probability on $\Rn$ defined as the law of the \emph{ranked} tree shape generated by a pure-birth process started at 1 and stopped upon reaching $n$, where the radial order is the chronological order of node splits. It is clear that this probability does not depend on the birth rate $b$.
\begin{prop}
For each integer $n\ge 2$, $\yc_n=\Pnurt$. In particular, the law $\yc_n\circ r^{-1}$ of a Yule tree whose node ranks are ignored is  $\Pnerm\circ \ell^{-1}$.
\end{prop}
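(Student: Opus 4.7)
The plan is to prove the first equality $\yc_n = \Pnurt$ by induction on $n$, exploiting the Markov structure of the pure-birth process. The pivotal observation is that, by competition of exponentials, conditionally on the ranked shape produced by the first $n-1$ splits, the particle undergoing the $n$-th split is uniformly distributed over the $n$ currently living tips, independently of the past splitting times. The base case $n=2$ is immediate since both laws are concentrated on the unique ranked shape with two tips (a cherry).

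For the inductive step I would fix $\tau' \in \Rc_{n+1}$ and identify the unique $\tau \in \Rn$ obtained by ``undoing'' the last split of $\tau'$: because the highest-ranked internal node of $\tau'$ has no descendant internal node, it must subtend a cherry, and removing this cherry (turning the internal node into a tip) yields a well-defined $\tau \in \Rn$. The Markov property then reads
$$
\yc_{n+1}(\tau') \;=\; \yc_n(\tau) \cdot \frac{N(\tau, \tau')}{n},
$$
where $N(\tau, \tau')$ counts the tips $v$ of $\tau$ at which grafting a fresh cherry reproduces the unlabelled ranked shape $\tau'$.

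The main obstacle is the computation of $N(\tau, \tau')$. Since every internal node of $\tau$ carries a distinct rank, every automorphism of $\tau$ must fix all internal nodes and can only act by swapping pairs of sibling tips within cherries; hence each tip orbit has size $1$ or $2$, the latter case occurring iff the tip lies in a cherry of $\tau$. Consequently $N(\tau, \tau') \in \{1, 2\}$, with $N(\tau, \tau') = 2$ exactly when the grafting site was already half of a cherry of $\tau$. In that case the pre-existing cherry is destroyed by the split (its former tip becomes internal) while a new one is created, so $c(\tau') = c(\tau)$; otherwise a fresh cherry is created with no old one destroyed, so $c(\tau') = c(\tau) + 1$. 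Substituting the inductive hypothesis $\yc_n(\tau) = 2^{n-1-c(\tau)}/(n-1)!$ into either case yields $\yc_{n+1}(\tau') = 2^{n - c(\tau')}/n!$, which matches the explicit value of $P_{n+1}^{\texttt{\rm urt}}(\tau')$ recorded earlier in the excerpt, closing the induction.

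The ``in particular'' claim follows immediately from the commutation $\ell \circ r = r \circ \ell$ stated in the excerpt:
$$
\yc_n\circ r^{-1} \;=\; \Pnurt \circ r^{-1} \;=\; \URnl\circ\ell^{-1}\circ r^{-1} \;=\; \URnl\circ r^{-1}\circ\ell^{-1} \;=\; \Pnerm\circ\ell^{-1}.
$$
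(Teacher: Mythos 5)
Your proof is correct and takes essentially the same route as the paper's: induction on $n$, collapsing the latest-born cherry to pass from $\Rc_{n+1}$ to $\Rn$, the Yule Markov property giving the recursion $\yc_{n+1}(\tau')=N(\tau,\tau')\,\yc_n(\tau)/n$, and the cherry identity $c(\tau')=c(\tau)+1_{\{N(\tau,\tau')=1\}}$. The only cosmetic difference is that you justify $N(\tau,\tau')\in\{1,2\}$ via the automorphism orbits of tips in a ranked shape, whereas the paper reads the same dichotomy off the condition $\lambda(v')=3$ on the parent $v'$ of the highest-ranked interior node.
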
   
\begin{proof}
Let us prove the proposition by induction on $n$ (the proposition obviously holds for $n=2$). Let $n\ge 2$ and assume the proposition holds for this $n$.
Now let $\tau$ be a ranked tree shape with $n+1$ tips. Let $v$ denote the maximal interior node in the radial order and let $v'$ denote the maximal interior node in the genealogical order of the path from the root to $v$. Define $\hat\tau$ as the ranked tree shape with $n$ tips obtained from $\tau$ by collapsing the cherry subtended by $v$ into a single terminal edge. 
By definition of the Yule process, if $v'$ subtends a cherry in $\hat\tau$, then splitting any of the two tips of this cherry of $\hat\tau$ into a new cherry yields $\tau$. Otherwise, only one tip of $\hat\tau$ can be split to yield $\tau$. This can be expressed as
$$
\yc_{n+1}(\tau) = \frac{2^{1_{\lambda(v')=3}}}{n}\yc_{n}(\hat\tau)
$$
By the induction hypothesis,
$$
\yc_{n+1}(\hat\tau) = \Pnurt(\hat\tau) = \frac{2^{n-1-c(\hat\tau)}}{(n-1)!},
$$
so that
$$
\yc_{n+1}(\tau) = \frac{2^{1_{\lambda(v')=3}}\, 2^{n-1-c(\hat\tau)}}{n!}.
$$
But now check that $c(\tau) = c(\hat\tau)+1_{\lambda(v')\not=3}$, which yields the result.
\end{proof}

\subsubsection{The Kingman tree shape}
Conversely, consider a population where each pair of particles independently merges at the same rate $c$, called coalescence rate (or competition rate) into one single new particle. The process counting the size of the population is a Markov pure-death process jumping from $k$ to $k-1$ at rate $ck(k-1)/2$.\\ 
\\
More specifically, let $\kc_n$ denote the probability on $\Rnl$ of the labelled, ranked tree shape generated by this process started from $n$ labelled particles and naturally stopped when it reaches 1, usually called Kingman $n$-coalescent tree (\citealt{Kin82}). Obviously, $\kc_n$ does not depend on $c$.
\begin{prop}
For each integer $n\ge 2$, $\kc_n=\URnl$. In particular, the law $\kc_n\circ \ell^{-1}$ of a Kingman tree whose labels are ignored is the law $\URnl\circ \ell^{-1}=\yc_n$ of a Yule tree.
\end{prop}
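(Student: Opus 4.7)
The plan is to compute $\kc_n(\tau)$ for an arbitrary $\tau\in\Rnl$ and show that it equals the constant $1/r_n$; by Proposition \ref{prop:cardinals} this forces $\kc_n=\URnl$. First I would describe the coalescent output purely combinatorially. By the memorylessness of exponential clocks and the fact that the identity of the first of $\binom{k}{2}$ i.i.d.\ exponentials is uniform, at the jump from $k$ lineages to $k-1$ a pair is chosen uniformly among the $\binom{k}{2}$ currently available pairs, independently of the past. The tree $\tau$ is built by declaring each initial lineage a leaf with its original label, each merger an internal node whose children are the two merging lineages, and ordering the internal nodes in reverse chronological order, so that the root corresponds to the last (most ancient, in coalescent time) merger.

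Next I would establish a bijection between realizations of the coalescent --- encoded by sequences of pair choices $(\pi_n,\pi_{n-1},\ldots,\pi_2)$, where $\pi_k$ is the pair merged when going from $k$ to $k-1$ lineages --- and elements of $\Rnl$. The construction above is the forward direction. For the inverse, list the internal nodes of $\tau$ in decreasing radial order as $v_{n-1}\sqsupset v_{n-2}\sqsupset\cdots\sqsupset v_1$, so that $v_1$ is the root. Because $v_{n-1}$ is maximal in $\sqsubset$ and radial order respects genealogical order, $v_{n-1}$ cannot have an internal descendant, so its two children are leaves whose label pair is $\pi_n$. Contracting the cherry at $v_{n-1}$ into a single leaf carrying the block $\pi_n$ yields a new ranked tree with $n-1$ tips labelled by a partition of $\{1,\ldots,n\}$; iterating produces $(\pi_{n-1},\ldots,\pi_2)$. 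An easy induction confirms that this is indeed an inverse.

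Independence and uniformity of the pair choices then give
$$
\kc_n(\tau)=\prod_{k=2}^{n}\binom{k}{2}^{-1}=\prod_{k=2}^{n}\frac{2}{k(k-1)}=\frac{2^{n-1}}{n!\,(n-1)!}=\frac{1}{r_n},
$$
using Proposition \ref{prop:cardinals} for the last equality. Since this probability does not depend on $\tau$, we conclude $\kc_n=\URnl$. The second assertion follows at once from the previous proposition, since $\kc_n\circ\ell^{-1}=\URnl\circ\ell^{-1}=\Pnurt=\yc_n$. The only step requiring care is the bijection between coalescent realizations and labelled ranked tree shapes; this amounts to encoding the coalescent by its Markov chain of partitions of $\{1,\ldots,n\}$, which is standard but is where essentially all the bookkeeping lives.
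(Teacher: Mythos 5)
Your proof is correct and in substance the same as the paper's: both hinge on the observation that at each coalescence event the merged pair is uniform among the $\binom{k}{2}$ currently available pairs, and that collapsing the cherry subtended by the top-ranked internal node inverts a merge. The paper packages this as an induction via $\kc_{n+1}(\tau)=\frac{2}{n(n+1)}\kc_n(\hat\tau)$, which you simply unroll into an explicit bijection between coalescent realizations and elements of $\Rnl$ together with the closed-form product $\prod_{k=2}^n\binom{k}{2}^{-1}=2^{n-1}/(n!\,(n-1)!)=1/r_n$.
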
   
\begin{proof}
Let us prove the proposition by induction on $n$ (the proposition obviously holds for $n=2$). Let $n\ge 2$ and assume the proposition holds for this $n$. Let $\tau$ be a ranked tree shape with $n+1$ labelled tips. Let $i$ and $j$ be the labels of the cherry subtended by the maximal interior node in the radial order, and as in the previous proof, let $\hat\tau$ be the ranked tree shape with $n$ labelled tips obtained from $\tau$ by collapsing this cherry into one single terminal edge and relabelling the new tip. By induction $\kc_n(\hat\tau)= 1/r_n$, so that 
$$
\kc_{n+1}(\tau) = \frac{2}{n(n+1)} \kc_n(\hat\tau) =  \frac{2}{n(n+1)} \frac{1}{r_n} = \frac{1}{r_{n+1}},
$$
which terminates the proof.
\end{proof}   

\begin{exo}
Explain why the probability distribution $\Pnerm$ puts more weight on balanced trees 
than $\Pnpda$. If $a_n$ denotes the most imbalanced tree with $n$ tips, known as the caterpillar tree (see panel (a) in each of Figures \ref{fig:fourtips}--\ref{fig:sixtips}), prove that
$$
\Pnpda\circ\ell^{-1}(a_n)=\frac{2^{n-2}}{c_{n-1}}\quad\mbox{ and }\quad \Pnerm\circ\ell^{-1}(a_n)=\frac{2^{n-2}}{(n-1)!}.
$$ 
\end{exo}
\begin{table}[!ht]
\begin{center}
\large
\begin{tabular}{c | c | c | c | c | c}
$n$& $\ 3\ $ &$\ 4\ $&$\ 5\ $&$\ 6\ $&$\ 7\ $\\
\hline
 & & & & &\\
$\Pnpda\circ\ell^{-1}(a_n)$&$1$&$\frac45$&$\frac47$&$\frac8{21}$&$\frac8{33}$\\
 & & & & &\\
\hline
 & & & & &\\
$\Pnerm\circ\ell^{-1}(a_n)$&$1$&$\frac23$&$\frac13$&$\frac2{15}$&$\frac2{45}$\\
 & & & & &
\end{tabular}
\end{center}
\caption{Numerical values of the probability of the caterpillar tree under $\Pnerm$ and $\Pnpda$. Observe the larger weight put on this tree under $\Pnpda$ than under $\Pnerm$.}
\end{table}
\noindent
One can easily see that the probability of the caterpillar tree with $n$ tips, under either of these two distributions, vanishes as $n\to\infty$. Nevertheless, the probability that one of the two subtrees incident to the root subtends exactly one tip (i.e., a long external edge) converges to $\frac14$ under $\Pnpda$, as will be seen in Exercise \ref{exo:qnpda(i)}.\\
\\
It is generally observed that none of the random tree shapes generated by $\Pnerm$ or by $\Pnpda$ statistically give a good fit to empirical species trees. More specifically, real phylogenies are less balanced than random trees under $\Pnerm$ but more balanced than random trees under $\Pnpda$. In the following section, we introduce more general models of random tree shapes, as well as a one-parameter family of tree shape distributions interpolating in particular $\Pnerm$ and $\Pnpda$.

\section{Markov branching models}

\subsection{Definitions and interval splitting}

In what follows, for each $\tau\in \Tn\cup\Tnl$, we will write $\tau =\tau'\oplus\tau''$ to denote the fact that $\tau'$ and $\tau''$ are the two  subtrees descending from the root of $\tau$. It will be convenient to assume that $\tau'$ is chosen uniformly at random among the two possible subtrees (except in the case when $\tau$ is not labelled and the root is symmetric, since then $\tau'=\tau''$). Note that it would also have been possible to select $\tau'$ deterministically as the subtree containing the tip with label 1 say, or the subtree with the smaller number of tips.

\begin{dfn}[\citealt{Ald96}]
\label{dfn:Mbm}
A family of distributions $(P_n)$ on $\Tnl$ is a Markov branching model if there is a family of laws $(q_n)$ on  $\{1,\ldots, n-1\}$ such that $q_n(i) = q_n(n-i)$ for all $i$, and for any $\tau\in \Tnl$,
$$
P_n(\tau)= \frac{2q_n(i)}{{n\choose i}} \, P_i(\tau')\,P_{n-i}(\tau''),
$$
where $\tau=\tau'\oplus\tau''$ and $i=\lambda(\tau')$ (when $n=3, $ and $i=2$, $P_2(\tau')=1=P_1(\tau'')$).
\end{dfn}
\noindent
Note the abuse of notation, since $\tau'$ is not in general labelled by $\{1,\ldots,i\}$. It is implicit that $P_n$ is invariant by permutations of labels ($P_n$ is said exchangeable or equivariant) and defined independently from the chosen label set.
\begin{rem}
If $\tau'$ was chosen to be the subtree containing the tip with label 1 (instead of being chosen at random), the previous display would become
$$
P_n(\tau)= \frac{q_n(i)}{{n-1\choose i-1}} \, P_i(\tau')\,P_{n-i}(\tau'').
$$
A third possibility would be to choose $\tau'$ as the subtree with the smaller number of tips. The details are left to the reader (see also \citealt{Ald01}).
\end{rem}
\noindent
To generate a labelled tree shape with law $P_n$, proceed recursively. 
\begin{enumerate}
\item
Draw a random variable $K_n\in\{1,\ldots,n-1\}$ with law $q_n$.
\item
Conditional on $K_n=i$, select a subset $I$ of $\{1,\ldots,n\}$ with cardinality $i$, uniformly at random.
\item 
Create two edges joining the root of $\tau$ to $\tau'$ and $\tau''$ respectively, where $\tau'$ and $\tau''$ are two independent tree shapes labelled respectively by $I$ and its complement, with respective laws $P_i$ and $P_{n-i}$. 
\end{enumerate}
\begin{exo}
\label{exo:Mbml-1}
If $\tau\in \Tn$, recall that $\tau=\tau'\oplus\tau''$ still makes sense, and check that $s(\tau)=s(\tau')+s(\tau'')$ except when $\tau'=\tau''$, where $s(\tau)=1+s(\tau')+s(\tau'')$. Deduce that
$$
P_n\circ \ell^{-1}(\tau)= 2^{1_{\tau'\not=\tau''}}q_n(i) \, P_i\circ \ell^{-1}(\tau')\,P_{n-i}\circ \ell^{-1}(\tau''),
$$
where again $i=\lambda(\tau')$. Explain why $(P_n\circ \ell^{-1})$ can also be seen as a Markov branching model (on unlabelled tree shapes).
\end{exo}
\noindent
Now we introduce a specific way of designing Markov branching models. 
Let $f$ be a non-negative function on $(0,1)$ such that $f(x)=f(1-x)$ and $\int_0^{1} x(1-x)f(x)\,dx<\infty$. Set 
$$
\alpha_n:= \sum_{i=1}^{n-1} {n\choose i}\int_0^1x^i(1-x)^{n-i} f(x)\, dx = \int_0^1 \big(1-x^n-(1-x)^n\big)\,f(x)\,dx,
$$
which is finite by assumption. Then we define 
$$
q_n^f(i):= \alpha_n^{-1} \, {n\choose i} \int_0^1x^i(1-x)^{n-i} f(x)\, dx,
$$
as well as $(P_n^f)$ the associated Markov branching model.

\noindent
In the case when $f$ is integrable, one can generate a labelled tree shape with law $P_n^f$ by throwing uniformly $n$ points in $(0,1)$ and performing a recursive interval splitting procedure, down until each interval contains at most two of the initial points. Let  $U_1,\ldots, U_n$ be i.i.d. random variables uniform in $(0,1)$. Assume that $\int_0^1 f(x)\, dx=1$.
\begin{enumerate}
\item
Draw an independent r.v. $X$ with density $f$ in $(0,1)$. 
\item 
Let $I$ be the subset of $\{1,\ldots,n\}$ defined by: $i\in I\Leftrightarrow U_i<X$. Then conditional on $X=x$, proceed as in the general case, by putting the labels of $I$ in $\tau'$ and those of its complement in $\tau''$.
\item
Apply recursively the same procedure to the intervals $(0,x)$ and $(x,1)$ independently. 
\end{enumerate}
It is intuitive from this description that the more $f$ puts weight close to the boundaries of $(0,1)$, the more the associated random tree shape is imbalanced.

\subsection{ERM, PDA and Aldous' $\beta$-splitting model}

\begin{thm}[\citealt{Har71, Slo90, Bro94}]
Both $(\Pnerm)$ and $(\Pnpda)$ are Markov branching models with 
$$
\qnpda(i)= \frac{1}{2} {n \choose i} \frac{t_i\, t_{n-i}}{t_n}\quad\mbox{ and }\quad  \qnerm(i)= \frac{1}{n-1} 
$$
\end{thm}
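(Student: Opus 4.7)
The plan is to verify the Markov branching factorization directly for each family, by computing $P_n(\tau)$ from the explicit formulas already available in the previous subsection and then reading off $q_n(i)$.

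For the PDA family, this is almost immediate. Since $\Pnpda$ is uniform on $\Tnl$, the defining identity of Definition \ref{dfn:Mbm} reads
$$
\frac{1}{t_n} = \frac{2\qnpda(i)}{\binom{n}{i}}\cdot\frac{1}{t_i}\cdot\frac{1}{t_{n-i}},
$$
which forces $\qnpda(i)=\frac12\binom{n}{i}\,t_i t_{n-i}/t_n$. The symmetry $\qnpda(i)=\qnpda(n-i)$ is obvious, and to check that $\sum_i \qnpda(i)=1$ I would argue combinatorially: every $\tau\in\Tnl$ decomposes uniquely as $\tau=\tau'\oplus\tau''$ once we have chosen which of the two root subtrees is called $\tau'$, and labelling $\tau'$ amounts to choosing a subset $I\subset\{1,\ldots,n\}$ of size $i=\lambda(\tau')$. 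This gives $t_n=\tfrac12\sum_{i=1}^{n-1}\binom{n}{i}t_it_{n-i}$, i.e., the normalisation.

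For the ERM family, I would start from $\Pnerm(\tau)=\#r^{-1}(\{\tau\})/r_n$ and plug in the labelled version of Proposition \ref{prop:r-1}: for $\tau\in\Tnl$, $\#r^{-1}(\{\tau\})=(n-1)!\prod_{v\in\mathring\vc(\tau)}(\lambda(v)-1)^{-1}$. The key observation is that when $\tau=\tau'\oplus\tau''$ with $\lambda(\tau')=i$, the internal vertices of $\tau$ split as $\{\text{root}\}\sqcup\mathring\vc(\tau')\sqcup\mathring\vc(\tau'')$, and the root contributes a factor $1/(n-1)$ while the other two products rebuild $\#r^{-1}(\{\tau'\})/(i-1)!$ and $\#r^{-1}(\{\tau''\})/(n-i-1)!$. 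This yields
$$
\#r^{-1}(\{\tau\})=\binom{n-2}{i-1}\,\#r^{-1}(\{\tau'\})\,\#r^{-1}(\{\tau''\}),
$$
so that $\Pnerm(\tau)=\binom{n-2}{i-1}\,\frac{r_ir_{n-i}}{r_n}\,\Perm{i}(\tau')\Perm{n-i}(\tau'')$.

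The remaining step is a one-line simplification using $r_k=k!(k-1)!/2^{k-1}$ from Proposition \ref{prop:cardinals}: the powers of $2$ combine as $2^{n-1}/(2^{i-1}2^{n-i-1})=2$, and the factorials rearrange into $\binom{n-2}{i-1}\,\frac{r_ir_{n-i}}{r_n}=\frac{2}{(n-1)\binom{n}{i}}$, giving $\qnerm(i)=1/(n-1)$ as claimed. Symmetry and normalisation are then trivial. The main obstacle is really bookkeeping rather than insight: one has to be careful in the ERM case to account for the root vertex separately from the two subtrees when applying Proposition \ref{prop:r-1}, as it is precisely this root factor $1/(n-1)$ that produces the uniform splitting law. (A more conceptual alternative for ERM would be to invoke the Yule construction of $\Pnerm$: conditionally on the tree reaching size $n$, the size of one of the two lineages present after the first split is uniform on $\{1,\ldots,n-1\}$ by the standard Pólya-urn identity for Yule processes; I would mention this as an illuminating cross-check but rely on the direct algebraic verification above.)
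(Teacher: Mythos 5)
Your PDA verification is essentially identical to the paper's: both check directly that the uniform law on $\Tnl$ satisfies the Markov branching factorization, which forces the stated $\qnpda$.

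Your ERM argument, however, takes a genuinely different route. The paper establishes the split law $\qnerm(i)=\frac1{n-1}$ probabilistically (a two-colour Yule argument with induction on the strong Markov property), and then separately argues the conditional independence of the two root subtrees via the Kingman coalescent (observing that forbidding cross-coalescences leaves the within-clade exponential clocks independent). You instead do a single algebraic computation: you observe that the product $\prod_{v\in\mathring\vc(\tau)}(\lambda(v)-1)^{-1}$ in Proposition \ref{prop:r-1} factors over the root and the two subtrees, giving $\#r^{-1}(\{\tau\})=\binom{n-2}{i-1}\#r^{-1}(\{\tau'\})\#r^{-1}(\{\tau''\})$, and then cancel with the explicit value $r_k=k!(k-1)!/2^{k-1}$ to read off $\qnerm(i)=1/(n-1)$. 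I checked the algebra; the factorials and powers of $2$ cancel exactly as you claim. Your route has the merit of being self-contained (it relies only on the enumerative Propositions \ref{prop:cardinals} and \ref{prop:r-1}, already proved) and of establishing the marginal split law and the conditional independence simultaneously, whereas the paper's proof splits these into two probabilistic lemmas requiring the Yule and Kingman constructions. What the paper's route buys is insight: the Yule/P\'olya-urn argument explains \emph{why} the split is uniform, and the Kingman argument shows a stronger structural fact about conditioned coalescents, both of which recur later in the notes. Your closing remark that the Yule urn identity would serve as a cross-check is exactly the first half of the paper's own proof.
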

\begin{proof}
We know that for any $\tau\in\Tnl$ written $\tau=\tau'\oplus\tau''$, with $i=\lambda(\tau')$, $\Pnpda(\tau) = \frac{1}{t_n}$, $\Ppda{i}(\tau') = \frac{1}{t_i}$ and $\Ppda{n-i}(\tau'') = \frac{1}{t_{n-i}}$, so that 
$$
\Pnpda(\tau) =  \frac{1}{t_n} =  \frac{t_i\,t_{n-i}}{t_n}\, \Ppda{i}(\tau')\,\Ppda{n-i}(\tau''),
$$
which agrees with the characterization of Markov branching models choosing $\qnpda$ as in the theorem.

We now focus on the distribution of $\lambda(\tau')=:J_n$ under $\Pnerm$. 
Recall that $\Pnerm\circ\ell^{-1}$ is the law of the unranked and unlabelled Yule tree. Consider two particles, a red one and a black one. Give these two particles independent Yule descendances and stop the process when the total number of particles equals $n$. By the strong Markov property of the Yule process (applied at the time it reaches 2), $J_n$ has the same law as the number of red (say) particles when the process stops. 
By the strong Markov property applied at the first time when there are $n-1$ particles, it is easy to see that
$$
\PP(J_n = i) = \frac{i-1}{n-1}\,\PP(J_{n-1}=i-1) +  \frac{n-1-i}{n-1}\,\PP(J_{n-1}=i), 
$$
An immediate induction shows that $\PP(J_n = i) =\displaystyle \frac 1{n-1}$.

Now recall that $\Pnerm$ is also the law of the unranked Kingman coalescent tree. Fixing a subset $I$ of $\{1,\dots,n\}$ with cardinality $i$ and conditioning the Kingman coalescent tree to have on the one hand all lineages initially labelled by $I$ and on the other hand all lineages initially labelled by the complement of $I$, coalesce within each other before coalescing between each other, yields two independent Kingman coalescent trees (forbidding a restricted subset of the pairwise exponential clocks to ring does not alter the independence of the other clocks). Now these two unranked, labelled subtrees follow respectively $P_i^{\texttt{\rm erm}}$ and $P_{n-i}^{\texttt{\rm erm}}$, which yields the result.
\end{proof}
\begin{exo}
\label{exo:qnpda(i)}
Show that $\lim_{n\to\infty} \qnpda (i) = c_{i-1} 4^{-i}$, which `contrasts sharply with the flat distribution $\qnerm$' (\citealt{SSBook}). 
\end{exo}

\begin{table}
\begin{center}
\large
\begin{tabular}{c|c|c|c|c}
$\ i\ $&$\ 1\ $&$\ 2\ $&$\ 3\ $&$\ 4\ $\\
\hline
 & & & & \\
$\lim_n\qnpda(i)$&$\frac14$&$\frac1{16}$&$\frac1{32}$&$\frac5{128}$\\
 & & & & 
\end{tabular}
\end{center}
\caption{The limiting value, as $n\to\infty$, of a basal split $i$ vs $n-i$, under $\Pnpda$.}
\end{table}

\subsubsection{Back to the Galton--Watson tree}
\label{BGW-back}
Here, we want to give a proof of Proposition \ref{prop:gw} via Markov branching models, without using  \eqref{eqn:gw}.
Let $\tau\in\Tn$ and write $\tau=\tau'\oplus\tau''$, $i=\lambda(\tau')$. Then by the branching property,
$$
\gw(\tau) = p\,2^{1_{\tau'\not=\tau''}}\,\gw(\tau')\,\gw(\tau''),
$$
which becomes
$$
\gw_n(\tau) = p\,2^{1_{\tau'\not=\tau''}}\,\frac{\sigma_i\,\sigma_{n-i}}{\sigma_n}\,\gw_i(\tau')\,\gw_{n-i}(\tau'').
$$
Now recalling Exercise \ref{exo:Mbml-1}, this shows that $\gw_n=P_n\circ\ell^{-1}$, where $(P_n)$ is the Markov branching model associated with 
$$
q_n(i)=p\,\frac{\sigma_i\,\sigma_{n-i}}{\sigma_n}
$$
Then it remains to show that
\begin{equation}
\label{eqn:sigman}
\sigma_n= \frac{2^{n-1}\,t_n}{n!} \,p^{n-1}\,(1-p)^n,
\end{equation}
for this will ensure that
$$
q_n(i)= \frac12 { n \choose i} \,\frac{t_i\,t_{n-i}}{t_n},
$$
which indeed is the splitting probability of the PDA model.
Actually, \eqref{eqn:sigman} was already obtained as Eq \eqref{eqn:gw-nbtips}, but this equation was derived from \eqref{eqn:gw}, so we have to prove \eqref{eqn:sigman} by other means. This can actually be done in multiple ways, using for example the Lukasiewicz path associated to the tree and Dwass identity (see for example \citealt{PitBook}).

\subsubsection{Aldous' $\beta$-splitting model}

The $\beta$-splitting model of \citet{Ald96, Ald01} is a one-parameter family of interval splitting branching models. Specifically, for any $\beta\in(-2,+\infty)$, consider
$$
f_\beta(x)= x^\beta (1-x)^\beta\qquad x\in(0,1).
$$
The law $q_n^{f_\beta}$ associated with $f_\beta$ will be denoted $q_n^\beta$.
\begin{exo}
Prove that 
\begin{equation}
\label{eqn:qnbeta}
q_n^\beta (i)= \frac{1}{a_n(\beta)}\, \frac{\Gamma(\beta+i+1)\, \Gamma(\beta+n-i+1)}{\Gamma(i+1)\,\Gamma(n-i+1)} ,
\end{equation}
where $\Gamma$ is the usual Gamma function $\Gamma(x)=\intgen t^{x-1}\, e^{-t}\,dt$, for $x>0$,
and
$$
a_n(\beta) :=\frac{\Gamma(2\beta + n+2)}{\Gamma(n+1)}\,\int_0^1 \big(1-x^n-(1-x)^n\big)\,x^\beta(1-x)^\beta\,dx.
$$
\end{exo}
\noindent
This family has some interesting special cases. As noticed earlier, the balance of the tree increases with $\beta$. As $\beta \to\infty$, the intervals are split deterministically in their middle, while as $\beta\to -2$, the splitting procedure converges to pure erosion, that is $P_n^\beta$ puts weight converging to 1 on the caterpillar tree. The three other cases of interest are $\beta=0$, $\beta =-3/2$ and $\beta=-1$.

\begin{itemize}
\item $\beta=0$. Thanks to Eq \eqref{eqn:qnbeta}, $q_n^0(i)$ does not depend on $i$, which implies $q_n^0(i)=\displaystyle\frac 1{n-1}=\qnerm(i)$, so that
$$
\beta=0\ \Longrightarrow \ P_n^\beta =\Pnerm.
$$
\item $\beta=-3/2$. We are going to prove that
$$
\beta=-3/2\ \Longrightarrow \ P_n^\beta =\Pnpda.
$$
First notice that we can write
$$
t_n= \frac{(2n-3)!}{2^{n-2}\,(n-2)!} = \frac{\Gamma(2n-2)}{2^{n-2}\,\Gamma(n-1)} = 2^{n-1}\,\frac{\Gamma\big(n-\frac 1 2\big)}{\Gamma\big(\frac 1 2\big)},
$$
where we used the identities 
$\Gamma(x)\,\Gamma\big(x+\frac 1 2\big)=2^{1-2x}\,\sqrt\pi \,\Gamma(2x)$
 and $\Gamma\big(\frac 1 2\big)=\sqrt\pi$. Now tedious calculations (note the missing `4' in \citealt{Ald96}, equation following Eq 5) show that
$$
a_n\left(-\frac 32\right) = \frac{4\,\Gamma\big(n-\frac 1 2\big)\,\Gamma\big(\frac 1 2\big)}{\Gamma(n+1)},
$$
so that
\begin{multline*}
q_n^{-\frac 3 2}(i) = \frac{\Gamma(n+1)}{4\,\Gamma\big(n-\frac 1 2\big)\,\Gamma\big(\frac 1 2\big)}\,\frac{\Gamma\big(i-\frac 1 2\big)\, \Gamma\big(n-i-\frac 1 2\big)}{\Gamma(i+1)\,\Gamma(n-i+1)} \\
= \frac{1}{2}\, \frac{\Gamma(n+1)}{\Gamma(i+1)\,\Gamma(n-i+1)} \,\frac{t_i\,t_{n-i}}{t_n} = \qnpda(i) .
\end{multline*}
\item $\beta=-1$. This model is sometimes called AB model for `Aldous branching'. First check thanks to \eqref{eqn:qnbeta} that
$$
q_n^{-1} (i)= \frac{1}{a_n(-1)}\, \frac{\Gamma(i)\, \Gamma(n-i)}{\Gamma(i+1)\,\Gamma(n-i+1)} = \frac{1}{a_n(-1)}\, \frac{1}{i(n-i)}.
$$
Then
$$
a_n(-1) = \sum_{i=1}^{n-1}\frac{1}{i(n-i)} = \frac{1}{n} \sum_{i=1}^{n-1}\left(\frac{1}{i}+\frac{1}{n-i}\right) = \frac{2h_{n-1}}{n},
$$
with the usual notation $h_n$ for the harmonic series
$$
h_n:=1+\frac12 +\cdots +\frac1n.
$$
In the end, we get 
$$
q_n^{-1} (i)= \frac{n}{2h_{n-1}}\, \frac{1}{i(n-i)}
$$
\end{itemize}
The remarkable feature of the $\beta$-splitting family is that it interpolates between maximally imbalanced (caterpillar) trees and random (maximally) balanced trees passing through $\Pnpda$ and $\Pnerm$.
Table \ref{table-median} is taken from \citet[Table 3]{Ald01} and provides the median size of the smaller daughter clade at the basal split under $P_n^\beta$.

\begin{table}
\begin{center}
\large
\begin{tabular}{c|c|c|c|c|c}
$\beta$&$\ -2\ $&$\ -\frac32\ $&$\ -1\ $&$\ 0\ $&$\ \infty\ $\\
\hline
 & & & & & \\
smaller clade size&$1$&$1.5$&$\sqrt n$&$\frac n4$&$\frac n2$\\
 & & & & & 
\end{tabular}
\end{center}
\caption{Median value of the smaller clade size at the basal split as $n\to\infty$, under $P_n^\beta$ (taken from \citealt{Ald01}).}
\label{table-median}
\end{table}

\begin{rem} The reason why the name of Aldous is tied to the special case $\beta=-1$ is due to the empirical observation that the trees generated by $P_n^{-1}$ give the best fit to real phylogenies. In \citet{Ald01}, this was shown by a visual fit to a linear dependence with slope 1/2 in the log-log scale of the size of the median split vs $n$ (see previous table). In \cite{BF06}, the MLE of $\beta$ for species trees is remarkably centered around $-1$. The biological reason for this pattern is still very much debated (\citealt{HHS15, MLM15}).
\end{rem}
\noindent
For the record, we finally provide a closed-form expression for the probability of a given labelled tree shape $\tau\in\Tnl$ under $P_n^\beta$.
\begin{prop}
For any $\beta>-2$ and $\tau\in\Tnl$,
\begin{equation}
\label{eqn:explicitPnbeta}
P_n^\beta(\tau) =
\frac{ \Gamma(\beta+2)^n\,2^{n-1}}{\Gamma(\beta + n +1)}\,\prod_{v\in\mathring\vc(\tau)}\frac{\Gamma(\beta+\lambda(v)+1)}{\Gamma(\lambda(v)+1)\,a_{\lambda(v)}(\beta)}  
\end{equation}
\end{prop}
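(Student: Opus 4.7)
The natural strategy is induction on $n$, checking that the right-hand side of \eqref{eqn:explicitPnbeta} satisfies the same recursion that defines $P_n^\beta$ as a Markov branching model.

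\textbf{Recursion to verify.} By Definition \ref{dfn:Mbm} applied to $(P_n^\beta)$, together with the explicit expression \eqref{eqn:qnbeta} for $q_n^\beta$, one has for $\tau = \tau'\oplus\tau''$ with $i=\lambda(\tau')$,
$$
P_n^\beta(\tau) = \frac{2\,q_n^\beta(i)}{\binom{n}{i}}\,P_i^\beta(\tau')\,P_{n-i}^\beta(\tau'')
= \frac{2\,\Gamma(\beta+i+1)\,\Gamma(\beta+n-i+1)}{\Gamma(n+1)\,a_n(\beta)}\,P_i^\beta(\tau')\,P_{n-i}^\beta(\tau'').
$$
Denote by $F(n,\tau)$ the right-hand side of \eqref{eqn:explicitPnbeta}. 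The plan is to prove $F(n,\tau)=P_n^\beta(\tau)$ by induction on $n$, the base case $n=2$ being immediate: there is a unique element of $\Tc_2^\ell$, the unique vertex of $\mathring\vc$ has $\lambda(v)=2$, and a short computation with $a_2(\beta)=\Gamma(\beta+2)^2$ (obtained from the Beta integral) shows $F(2,\tau)=1$.

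\textbf{Inductive step.} The key observation is that when $\tau=\tau'\oplus\tau''$, the internal vertices split as
$$
\mathring\vc(\tau)=\mathring\vc(\tau')\sqcup\mathring\vc(\tau'')\sqcup\{v_\star\},
$$
where $v_\star$ is the bifurcation vertex at which $\tau'$ and $\tau''$ meet, and $\lambda(v_\star)=n$. Hence the product appearing in $F(n,\tau)$ factorises as
$$
\prod_{v\in\mathring\vc(\tau)}\frac{\Gamma(\beta+\lambda(v)+1)}{\Gamma(\lambda(v)+1)\,a_{\lambda(v)}(\beta)}
=\frac{\Gamma(\beta+n+1)}{\Gamma(n+1)\,a_n(\beta)}\cdot\!\!\prod_{v\in\mathring\vc(\tau')}\!\!(\cdots)\cdot\!\!\prod_{v\in\mathring\vc(\tau'')}\!\!(\cdots),
$$
and the two remaining products are exactly the ones that appear in $F(i,\tau')$ and $F(n-i,\tau'')$. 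Multiplying by the prefactor $\Gamma(\beta+2)^n\,2^{n-1}/\Gamma(\beta+n+1)$ of $F(n,\tau)$, the $\Gamma(\beta+n+1)$ factors cancel, the prefactors of $F(i,\tau')$ and $F(n-i,\tau'')$ combine via $\Gamma(\beta+2)^n=\Gamma(\beta+2)^i\Gamma(\beta+2)^{n-i}$ and $2^{n-1}=2\cdot 2^{i-1}\cdot 2^{n-i-1}$, and one obtains
$$
F(n,\tau)=\frac{2\,\Gamma(\beta+i+1)\,\Gamma(\beta+n-i+1)}{\Gamma(n+1)\,a_n(\beta)}\,F(i,\tau')\,F(n-i,\tau''),
$$
which is precisely the recursion satisfied by $P_n^\beta$. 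The induction closes.

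\textbf{Expected difficulty.} There is no real obstacle, only bookkeeping: the only substantive point is the decomposition $\mathring\vc(\tau)=\mathring\vc(\tau')\sqcup\mathring\vc(\tau'')\sqcup\{v_\star\}$ with $\lambda(v_\star)=n$, which is what makes the $\Gamma(\beta+n+1)$ in the denominator of the prefactor cancel with the $v_\star$-factor of the product. Everything else is algebra on Gamma functions and powers of $2$. A minor care is needed in the base case to get the normalisation of $a_2(\beta)$ right, but this follows from the definition of $a_n(\beta)$ and a standard Beta integral.
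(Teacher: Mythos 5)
Your proof is correct and takes essentially the same route as the paper: both unwind the Markov branching recursion by induction, using the decomposition of the internal-vertex set at the basal split and the identity $2 q_n^\beta(i)/\binom{n}{i} = 2\Gamma(\beta+i+1)\Gamma(\beta+n-i+1)/(\Gamma(n+1)\,a_n(\beta))$; the paper merely telescopes the product in one pass and reassociates $\Gamma$-factors rather than phrasing it as a verification that the candidate formula satisfies the recursion, but the underlying computation is identical. Your explicit check of the base case via $a_2(\beta)=\Gamma(\beta+2)^2$ is a small but welcome addition.
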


\begin{proof}
Thanks to Definition \ref{dfn:Mbm} and Eq \eqref{eqn:qnbeta}, writing $\tau=\tau'\oplus\tau''$ and $i=\lambda(\tau')$, we get by an immediate induction
\begin{eqnarray*}
P_n^\beta(\tau)&=&\frac{2q_n(i)}{{n\choose i}} \, P_i^\beta(\tau')\,P_{n-i}^\beta(\tau'')\\
 &=&  \frac{2}{a_n(\beta)\,n!} \,\Gamma(\beta+\lambda(\tau')+1)\, \Gamma(\beta+\lambda(\tau'')+1)\, P_i^\beta(\tau')\,P_{n-i}^\beta(\tau'')
\\ 
&=& \prod_{v\in\mathring\vc(\tau)}\frac{2}{a_{\lambda(v)}(\beta)\,\lambda(v)!} \,\Gamma(\beta+\lambda(v_1)+1)\,\Gamma(\beta+\lambda(v_2)+1),
\end{eqnarray*}
where for each internal node $v$, we have denoted by $v_1$ and $v_2$ its two offspring vertices. Therefore,
\begin{eqnarray*}
P_n^\beta(\tau)
&=&\Gamma(\beta + n +1)^{-1}\left[
\prod_{v\in\mathring\vc(\tau)}\frac{2\Gamma(\beta+\lambda(v)+1)}{a_{\lambda(v)}(\beta)\,\lambda(v)!} \right]
\left[
\prod_{u\text{ tip of }\tau} \Gamma(\beta+\lambda(u)+1)
\right] ,
\end{eqnarray*}
which yields \eqref{eqn:explicitPnbeta}.
\end{proof}

\begin{exo}
By giving the value 0 or $-3/2$ to $\beta$, recover from \eqref{eqn:explicitPnbeta} the explicit expressions 
$$
\Pnerm (\tau) =\frac{2^{n-1}}{n!}\prod_{v\in\mathring\vc(\tau)}\frac{1}{\lambda(v) -1}\quad \mbox{ and } \quad \Pnpda(\tau)=  \frac{1}{t_n}  
$$
For $\beta=0$ you will need to remember that $a_n(0)= n-1$, and for $\beta=-3/2$ that
$$
t_n=2^{n-1}\,\frac{\Gamma\big(n-\frac 1 2\big)}{\Gamma\big(\frac 1 2\big)} \quad \mbox{ and }\quad a_n\left(-\frac 32\right) = \frac{4\,\Gamma\big(n-\frac 1 2\big)\,\Gamma\big(\frac 1 2\big)}{\Gamma(n+1)}.
$$
\end{exo}

\subsection{Sampling consistency}

Following \cite{Ald96}, we will say that the Markov branching model $(P_n)$ is \emph{sampling consistent} if for each $n\ge 2$, the random tree shape  $\hat\tau$ obtained from the tree $\tau$ with law $P_{n+1}$, after removing its edge subtending the label $n+1$, has law $P_n$.
\begin{exo}
Prove that a Markov branching model where $q_4(2,2)=1$ cannot be sampling consistent. 
\end{exo}
\noindent
It is obvious that the interval splitting branching models $(P_n^f)$ are sampling consistent. 
The converse statement is given in the following theorem.
\begin{thm}
\label{thm:HMPW}
Let $(P_n)$ be a sampling consistent Markov branching model. Then there is a measure $\mu$ on $[0,1]$ invariant by $x\mapsto 1-x$ such that $\int_{(0,1)}x(1-x)\,\mu(dx)<\infty$ and
\begin{equation}
\label{eqn:qnfinal}
q_n(i)= \alpha_n^{-1}\,\left\{{n\choose i} \,\int_{(0,1)}\mu(dx)\,x^i\,(1-x)^{n-i} + n\mu(\{0\})1_{i=1}+ n\mu(\{1\})1_{i=n-1} \right\} 
\end{equation} 
where
$$
\alpha_n = \int_{(0,1)}\mu(dx)\,\big(1-x^n-(1-x)^n\big)+ n\mu(\{0,1\}).
$$
\end{thm}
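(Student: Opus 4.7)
The plan is to first rewrite sampling consistency as a linear recursion on $(q_n)$, then rescale the recursion to expose a Pascal-type identity for binomial moments, and finally extract $\mu$ via a Hausdorff / de~Finetti-type representation. For the first step I sample $\tau\sim P_{n+1}$ and decompose $\tau=\tau'\oplus\tau''$ with $\tau'$ chosen uniformly, so $J_{n+1}:=\lambda(\tau')$ has law $q_{n+1}$. Conditionally on $J_{n+1}=j$, by equivariance the tip labelled $n+1$ lies on the size-$j$ side with probability $j/(n+1)$. Removing this tip leaves the basal split intact (just decremented on the appropriate side) except when the side containing it has size $1$, in which case the root of $\hat\tau$ collapses and $\hat\tau$ equals the surviving subtree, of law $P_n$ by the Markov branching property. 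Tallying the resulting cases and using $q_{n+1}(j)=q_{n+1}(n+1-j)$, sampling consistency is equivalent to
\begin{equation*}
\big(n+1-2q_{n+1}(1)\big)\,q_n(i)\;=\;(n+1-i)\,q_{n+1}(i)+(i+1)\,q_{n+1}(i+1),\qquad 1\le i\le n-1.
\end{equation*}

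\textbf{Rescaling to a Pascal identity.} I define $\alpha_n>0$ inductively by $\alpha_2:=1$ and $\alpha_{n+1}\big(n+1-2q_{n+1}(1)\big):=(n+1)\alpha_n$ (positive because $q_{n+1}(1)\le \tfrac12$), and set $h_n(i):=\alpha_n\,q_n(i)\big/\binom{n}{i}$ for $1\le i\le n-1$. The elementary identity $(n+1-i)\binom{n+1}{i}=(n+1)\binom{n}{i}=(i+1)\binom{n+1}{i+1}$ turns the above recursion into the Pascal relation
\begin{equation*}
h_n(i)\;=\;h_{n+1}(i)+h_{n+1}(i+1),\qquad 1\le i\le n-1,
\end{equation*}
which is exactly the algebraic identity satisfied by the binomial moments $\int_{[0,1]}x^i(1-x)^{n-i}\mu(dx)$ of any positive measure on $[0,1]$.

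\textbf{Moment representation and closing.} The remaining step is to show that every nonnegative triangular array $(h_n(i))_{1\le i\le n-1}$ obeying this Pascal identity admits the representation
\begin{equation*}
h_n(i)\;=\;\int_{(0,1)}x^i(1-x)^{n-i}\,\mu(dx)+a\,\mathbf{1}_{i=1}+b\,\mathbf{1}_{i=n-1}
\end{equation*}
for some nonnegative measure $\mu$ on $(0,1)$ and some $a,b\ge 0$. This is a Hausdorff-type moment problem, which I would tackle either by a Choquet decomposition of the cone of Pascal-satisfying arrays (whose extremal rays are $(x^i(1-x)^{n-i})$ for $x\in(0,1)$ together with the two ``boundary'' rays $\mathbf{1}_{i=1}$ and $\mathbf{1}_{i=n-1}$, obtained as rescaled $x\to0,1$ limits), or, probabilistically, by Kolmogorov-extending the coupled family $(\tau_n)$ to an infinite tree $\tau_\infty$ and applying de~Finetti's theorem to the asymptotic proportion of tips on one side of the basal split---the atoms at $\{0,1\}$ then emerging from the event that one side remains bounded. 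Packaging $\mu|_{(0,1)}+a\delta_0+b\delta_1$ as a single measure $\mu$ on $[0,1]$ and substituting back in $q_n(i)=\binom{n}{i}h_n(i)/\alpha_n$ yields the announced formula. Finally, the normalisation $\sum_i q_n(i)=1$ pins down $\alpha_n=\int_{(0,1)}(1-x^n-(1-x)^n)\mu(dx)+n\mu(\{0,1\})$; the symmetry $q_n(i)=q_n(n-i)$ together with the uniqueness of the representation forces $\mu(dx)=\mu(d(1-x))$ (hence $a=b$); and $\alpha_2=1$ combined with the identity $1-x^2-(1-x)^2=2x(1-x)$ gives the integrability $\int_{(0,1)}x(1-x)\,\mu(dx)<\infty$. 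The main obstacle is this representation step: attributing the two boundary ``defects'' of the array to atoms of $\mu$ at the endpoints is delicate, since those two modes only arise as rescaled limits of the interior family $(x^i(1-x)^{n-i})$ rather than as members of it.
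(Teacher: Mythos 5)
Your derivation of the Pascal-type recursion
$$
\big(n+1-2q_{n+1}(1)\big)\,q_n(i)\;=\;(n+1-i)\,q_{n+1}(i)+(i+1)\,q_{n+1}(i+1),\qquad 1\le i\le n-1,
$$
and the subsequent rescaling $h_n(i):=\alpha_n q_n(i)/\binom{n}{i}$ leading to $h_n(i)=h_{n+1}(i)+h_{n+1}(i+1)$ are correct and constitute a genuinely different route from the paper. The paper never isolates this recursion: it immediately builds, via Kolmogorov extension, an infinite exchangeable partition $(J_n,J_n',J_n'')$ coupled across all $m\ge n$, proves a.s.\ positivity of the de~Finetti asymptotic frequency $Y_n$, and reads off $\mu$ from the law of $X_2'$ by a direct computation of $\PP(X_n'\in dx,\#K_n'=i)$. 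Your reformulation replaces all of this machinery with elementary algebra followed by a moment problem, which is a real gain in transparency. Where you stop short is in proving the representation of nonnegative Pascal arrays; your second suggested route (Kolmogorov + de~Finetti on the basal split) \emph{is} essentially the paper's argument, so it would not be an independent proof. However, the first route closes cleanly and is more elementary than you seem to fear: the sequence $c_m:=h_{m+2}(1)$, $m\ge 0$, is bounded and satisfies $(-\Delta)^k c_m=h_{m+k+2}(k+1)\ge 0$ for all $k,m\ge 0$, hence is completely monotone; the classical Hausdorff moment theorem then yields a finite measure $\nu'$ on $[0,1]$ with $c_m=\int y^m\,\nu'(dy)$, and unwinding gives $h_n(i)=\int_{[0,1]}y^{n-i-1}(1-y)^{i-1}\,\nu'(dy)$ for $1\le i\le n-1$. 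The atoms $\nu'(\{0\})$ and $\nu'(\{1\})$ contribute exactly $\nu'(\{0\})\,\indic{\{i=n-1\}}$ and $\nu'(\{1\})\,\indic{\{i=1\}}$ --- so the boundary ``defects'' you worried about appear as honest endpoint atoms of the Hausdorff measure, not as singular rescaled limits --- and setting $\mu(dx):=(x(1-x))^{-1}\nu'(1-dx)$ on $(0,1)$ with $\mu(\{0\}):=\nu'(\{1\})$, $\mu(\{1\}):=\nu'(\{0\})$ recovers \eqref{eqn:qnfinal}, with $\int_{(0,1)}x(1-x)\mu(dx)=\nu'((0,1))<\infty$ and the symmetry of $\mu$ following from that of $h_n$ by uniqueness of the Hausdorff measure. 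In short: your approach is correct and, once the single-sequence Hausdorff theorem is invoked, arguably shorter than the paper's exchangeability argument; the main caveat is that, as written, the representation step is only sketched and one of your two proposed ways to fill it simply reproduces the paper's proof.
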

\begin{rem}
If $\mu$ has a density $f$ w.r.t. Lebesgue measure, then we are left with the interval splitting models of Aldous. The terms due to atoms at 0 and 1 correspond to single labels being taken away from the rest, a phenomenon called \emph{erosion}. The theorem states that the only Markov branching models that are sampling-consistent combine interval splitting with erosion. In the case of pure erosion (i.e., when $\mu$ charges only $\{0,1\}$), all trees are caterpillar trees a.s.  
\end{rem}
\begin{rem}
A more general version of the previous statement (i.e., not restricted to binary trees) is shown in \cite{HMPW} by identifying the splitting rules with the transitions of a general fragmentation process (\citealt{BerBook2}). This also allows the authors to study scaling limits of these random tree shapes. In particular, when $\beta\in(-2,-1)$, the trees with $n$ tips generated by the $\beta$-splitting branching model, converge as $n\to\infty$ when their edges are given properly scaled lengths, to some closed set called real tree (weakly in the Gromov--Hausdorff topology, see next chapter). 
\end{rem}

\begin{proof}[Proof of Theorem \ref{thm:HMPW}]
Assume that $(P_n)$ form a Markov branching model, by definition exchangeable, and by assumption sampling consistent. 
Let $1\le i\le n\le m$. Consider the tree generated by $P_m$ and let $\sigma_{n,m}$ be the most recent common ancestor (mrca, i.e., the ancestor with maximal distance to the root) of all its tips carrying a label in $\{1,\ldots,n\}$, let $\tau_{n,m}$ denote the descending subtree of $\sigma_{n,m}$, and write $\tau_{n,m}=\tau_{n,m}'\oplus \tau_{n,m}''$ (each subtree being equally likely to be chosen as $\tau_{n,m}'$). Now let $J_{n,m}$ (resp. $J_{n,m}'$, $J_{n,m}''$) denote the set of labels carried by the tips of $\tau_{n,m}$ (resp. $\tau_{n,m}'$, $\tau_{n,m}''$). By construction, $\{1,\ldots, n\}$ is entirely contained in $J_{n,m}$, and intersects both $J_{n,m}'$ and $J_{n,m}''$, which form a partition of $J_{n,m}$. 

Next, by sampling consistency, the triple $(J_{n,m+1}, J_{n,m+1}', J_{n,m+1}'')$ restricted to $\{1,\ldots,m\}$ has the same law as $(J_{n,m}, J_{n,m}', J_{n,m}'')$. By Kolmogorov's extension theorem, all these triples can be coupled on a same probability space, i.e., there exists a random triple $(J_n, J_n', J_n'')$ of random subsets of $\N$, such that $J_n$ contains $\{1,\ldots,n\}$, $(J_n', J_n'')$ form a partition of $J_n$, both intersect $\{1,\ldots, n\}$, and the restriction of $(J_n, J_n', J_n'')$ to $\{1,\ldots,m\}$ has the same law as $(J_{n,m}, J_{n,m}', J_{n,m}'')$.

From here on, we denote $\{1,\ldots,n\}$ by $[n]$.
We define $I_n$ as the set $(J_n\setminus [n])-n$, that is
$$
I_n:=s_n(J_n\cap \{n+1,n+2,\ldots \}),
$$ 
where $s_n(x) = x-n$. We define similarly $I_n'$ and $I_n''$. Now the triple $(I_n, I_n', I_n'')$ is exchangeable, so by de Finetti's theorem, they all have an asymptotic frequency, say $Y_n, Y_n'$ and $Y_n''$ respectively. Note that conditional on $Y_n=y$, the random variables $(\indicbis{k\in I_n})_k$ are i.i.d. Bernoulli random variables with parameter $y$. In particular, $I_n$ is empty iff $Y_n=0$ and if $Y_n\not=0$, $I_n$ is infinite with positive asymptotic frequency $Y_n$. 

Let us prove that $\PP(I_n=\varnothing)=0$ so that $Y_n>0$ a.s. Set $p_{n,m}:=\PP(J_{n,m}=[n])$. In particular, $p_{2,m}$ is the probability that in the tree $T_m$, the tips labelled 1 and 2 form a cherry. Then by exchangeability,
$$
1\ge \PP(\text{ the tip labelled 1 belongs to a cherry in }T_m)=(m-1)\,p_{2,m},
$$
so that $p_{2,m}$ vanishes as $m\to\infty$. This shows that 
$$
\PP(I_2=\varnothing) = \PP(J_{2}=[2]) = \lim_{m\to\infty} p_{2,m} = 0.
$$
Following the preceding discussion, $I_2$ is a.s. infinite. Now it can be seen that $\tau_{2,m}$ is a subtree of $\tau_{n,m}$, so that $J_{2,m}\subseteq J_{n,m}$. As a consequence, $\#J_2$ is stochastically smaller than $\# J_n$, which shows that $J_n$ (and hence $I_n$) is infinite a.s. We have proved that $Y_n>0$ a.s. 

Now let $X_n'$ (resp. $X_n''$) be the asymptotic fraction of $I_n'$ (resp. $I_n''$) in $I_n$, that is
$$
X_n':=\lim_{k\to\infty}\frac{\#I'_n\cap[k]}{\#I_n\cap[k]}=\frac{Y_n'}{Y_n}\quad\mbox{ and }\quad X_n'':=\lim_{k\to\infty}\frac{\#I''_n\cap[k]}{\#I_n\cap[k]}=\frac{Y_n''}{Y_n}
$$
Note that $X_n'+X_n''=1$. Since $X_n'$ and $X_n''$ have the same law, we can record that $X'_n$ and $1-X'_n$ have the same law. 
The Markov property of the branching model and the exchangeability imply that $X_n'$ is independent of $I_n$ and in particular of $Y_n$. In addition, conditional on $J_n$ and $X_n'$, the indicator variables $(\indicbis{a_i\in I_n'})$, where $a_i$ is the $i$-th element of $I_n$, are independent copies of a Bernoulli r.v. with success parameter $X_n'$.

From now on, we write $K_n':=J_n'\cap [n]$ and $K_n''=J_n''\cap [n]$, so that $K_n'\cup K_n'' = [n]$.
Let $A$ be a fixed subset of $[n]$ such that $1\in A$ but $2\not\in A$. Now observe that on the event $K_n'=A$, the triple $(J_2,J_2', J_2'')$ is either equal to $(J_n, J_n', J_n'')$ or to $(J_n,J_n'', J_n')$  with probabilities equal to $1/2$. So for any $x\in(0,1)$, writing $A^c$ for $[n]\setminus A$,
$$
\PP(X_n'\in dx, K_n'=A) = \frac 12 \, \PP(X_2'\in dx, J_2'\cap [n]=A, J_2''\cap [n]=A^c).  
$$
Now recall that $J_2$ is independent of $X_2'$ and conditional on $J_2$ and $X_2'=x$, the indicator variables $(\indicbis{a_i\in I_2'})$, where $a_i$ is the $i$-th element of $J_2$, are independent copies of a Bernoulli r.v. with success parameter $x$. So we get
\begin{multline*}
\PP(X_2'\in dx, J_2'\cap [n]=A, J_2''\cap [n]=A^c)\\ = \PP([n]\subset J_2)\, \PP(X_2'\in dx)\, \PP(J_2'\cap [n]=A, J_2''\cap [n]=A^c|[n]\subset J_2, X_2'=x)\\ = \PP([n]\subset J_2)\,\PP(X_2'\in dx)\,x^{i-1}\,(1-x)^{n-i-1} ,
\end{multline*}
where we let $i$ denote the cardinality of $A$. We can rewrite the next-before-last equality as
$$
\PP(X_n'\in dx, K_n'=A) = \frac 12 \, \PP([n]\subset J_2)\,\PP(X_2'\in dx)\,x^{i-1}\,(1-x)^{n-i-1} . 
$$
Summing all these equalities over all possible $A$'s with cardinality $1\le i\le n-1$, gives 
$$
\PP(X_n'\in dx, \#K_n'=i, 1\in K_n', 2\not\in K_n') = \frac 12 \,  {{n-2}\choose {i-1}}\, \PP([n]\subset J_2)\,\PP(X_2'\in dx)\,x^{i-1}\,(1-x)^{n-i-1} .  
$$
By exchangeability again, the left-hand-side equals
$$
\PP(X_n'\in dx, \#K_n'=i, 1\in K_n', 2\not\in K_n') = \PP(X_n'\in dx, \#K_n'=i) \,\frac{i(n-i)}{n(n-1)},
$$
so that
$$
\PP(X_n'\in dx, \#K_n'=i) = {{n}\choose {i}}\, \PP([n]\subset J_2)\,\frac{\PP(X_2'\in dx)}{2x(1-x)}\,x^{i}\,(1-x)^{n-i} .  
$$
Now let us treat the case when $X_n'=0$, that is $I_n'$ is empty and $J_n'$ is reduced to a singleton. By exactly the same reasoning as above,
$$
\PP(X_n'=0, J_n'=\{1\}) = \frac12\,\PP([n]\subset J_2)\,\PP(X_2'=0), 
$$
so again by exchangeability
$$
\PP(X_n'=0, \#K_n'=1) = \frac n2\,\PP([n]\subset J_2)\,\PP(X_2'=0).
$$
Reasoning symmetrically with $X_n''$, we finally get for all $x\in[0,1]$ and $i\in[n-1]$, 
$$
\PP(X_n'\in dx, \#K_n'=i) = \PP([n]\subset J_2) \,{{n}\choose {i}}\,\left(x^{i}\,(1-x)^{n-i}\,1_{x\in(0,1)}+ 1_{(x,i)=(0,1)\text{ or }(1,n-1)}\right)\,\mu(dx),  
$$
where $\mu$ is the positive measure on $[0,1]$ defined by
$$
\mu(dx) := \frac{1}{2x(1-x)}\,\PP(X_2'\in dx)1_{x\in(0,1)} + \frac 12\PP(X_2'=0)\,\delta_0(dx)+\frac 12\PP(X_2'=1)\,\delta_1(dx) .
$$
 Summing on $i\in[n-1]$ yields
$$
\PP(X_n'\in dx) = \PP([n]\subset J_2) \,\left(\big(1-x^n-(1-x)^n\big)\,1_{x\in(0,1)}+  n\,1_{x\in\{0,1\}}\right)  \,\mu(dx). 
$$
Integrating w.r.t. $x$, we have
$$
\PP([n]\subset J_2) = \left(\int_{x\in(0,1)}\big(1-x^n-(1-x)^n\big)\,\mu(dx)+  n\mu(\{0,1\})\right)^{-1} =:\frac 1{\alpha_n} ,
$$
so that 
$$
\PP(X_n'\in dx, \#K_n'=i) = \alpha_n^{-1}\,\left({{n}\choose {i}}\,x^{i}\,(1-x)^{n-i}\,1_{x\in(0,1)}+ 1_{(x,i)=(0,1)\text{ or }(1,n-1)}\right)\,\mu(dx).  
$$
Integrating w.r.t. $x$ gives the result, because $q_n(i)= \PP(\#K_n'=i)$.
\end{proof}

\chapter{Real Trees}

Textbooks and surveys available on the topic of this chapter include: \citet{DLG02, EvaBook, LeG05}.

\section{Preliminaries}

\subsection{Scaling limits}
As seen in the previous chapter, it is tempting to investigate the limiting behaviour of some marginals of our random tree shapes with $n$ tips, as $n\to\infty$. Typically interesting marginals include the maximal leaf height (the generation at which the population becomes extinct), the maximal width (the maximal population size), the coalescence time (number of generations back to the most common recent 	ancestor of a given subpopulation). There are also higher dimensional, natural marginals like the leaf-height process (the sequence of heights of tip $i$, $i=1,\ldots,n$, for some plane embedding of the tree), or the width process (the process counting the number of individuals at each successive generation).\\
\\
If for the same proper rescaling several of these marginals converge in distribution, it is relevant to ask whether the trees themselves converge in some sense to some continuous object. Such a limit theorem would have several important implications.

First, if our marginals of interest can be obtained from the tree by a continuous mapping, then by the continuous mapping theorem, they should converge to the image of the limiting object by the same mapping (in practice however, it can be easier to prove the convergence directly than to prove the mapping's continuity...). 

Second, some difficult computations in the finite case can be smoothened out in the limiting case, just as solving differential equations can be simpler than solving difference equations. The limit theorem already provides the scaling, now computations can provide the constant in front of the scaling. For example the maximal distance achieved by a random walker in $n$ time units scales like $\sqrt{n}$ thanks to Donsker's theorem, and when rescaled by $\sqrt{n}$, it converges in distribution to the maximum of the reflected Brownian motion on $[0,1]$.

Third, exactly as in the case of Donsker's theorem, we could hope that the limit theorem is an invariance principle, in the sense that the law of the limiting object is the same for a wide class of converging random sequences. In practice, this has the very important consequence that the patterns predicted by the model do not depend on the details of the model.\\
\\
It is beyond the scope of these notes to give more details 
about limit theorems for random trees, see  \cite{LGM12} or \cite{H16}.
In this chapter, we want to directly pounce to the continuous objects, only mentioning in passing how they arise as limits of discrete objects. We end this section recalling some well-known definitions in this area. We will then  introduce the general framework of real trees and explain how they can be usefully coded by a real function called the (jumping) contour process.

\subsection{Local time} 
\label{subsec:local time}
If $A$ is a closed subset of $[0,\infty)$, a \emph{local time} associated to $A$ is a nondecreasing mapping $L:[0,\infty)\to[0,\infty)$ such that $L(0)=0$ and whose points of increase coincide with $A$. If $A$ is discrete, $L$ can be defined simply, for example as the counting process $L_t=\#[0,t]\cap A$. If $A$ is not discrete, the counting process will blow up at the first accumulation point of $A$, so a different strategy is needed.\\ 
\\
Assume that $A$ is perfect, i.e., it has empty interior and no isolated point. For any compact interval, say $[0,M]$, we can construct a continuous mapping $L:[0,M]\to [0,1]$ such that $L(0)=0$, $L(M)=1$ and for any $0\le s<t\le M$
$$
L_t>L_s \Leftrightarrow (s,t)\cap A\not=\varnothing.
$$
The construction is recursive, exactly as for the Cantor--Lebesgue function, also called `devil's staircase'. The reader who already knows this construction may skip the next paragraph. 

First recall that the open set $B:=(0,M)\setminus A$ can be written as a countable union of open intervals, say $(I_n)_{n\ge 1}$. Note that for any $\varepsilon>0$, there can be only finitely many of these intervals which have length larger than $\varepsilon$. Therefore, we can assume that the intervals $(I_n)$ are ranked by decreasing order of their lengths (in case of equality, in their order of appearance, say). Finally for each $n\ge 1$, write $I_n=(g_n,d_n)$. 

We are going to construct recursively, for each $n\ge 1$, a continuous mapping $L^n:[0,M]\to [0,1]$ which is piecewise affine and constant exactly on $\cup_{k=1}^n I_k$. First, $L^1$ is the function equal to $1/2$ on $I_1$, affine on $[0,g_1]$ and on $[d_1,1]$, such that $L^1(0)=0$ and $L^1(M)=1$. Now assume that we are given a continuous function $L^n:[0,M]\to [0,1]$ which is piecewise affine and constant exactly on $\cup_{k=1}^n I_k$. Writing $d_0=0$ and $g_0=1$, there is a unique pair $0\le k,j\le n$ such that $d_j<g_{n+1}<d_{n+1}<g_k$ minimizing $g_k-d_j$. Then we can define $L^{n+1}$ as the continuous function equal to $L^n$ outside $(d_k,g_j)$, constant to $\frac12(L^n(g_j) + L^n(d_k))$ on $[g_{n+1}, d_{n+1}]$, affine on $[d_j, g_{n+1}]$ and on $[d_{n+1}, g_k]$. It is easy to see that $L^{n+1}$ satisfies the announced properties. 

Now for any $p\in\N$, let $\Dc_p$ denote the set of dyadic numbers of $(0,1)$ whose dyadic expansion has length smaller than $p$, i.e., $\Dc_p=\{x\in (0,1): \exists (x_1,\ldots,x_p) \in\{0,1\}:x=\sum_{k=1}^px_k\,2^{-k}\}$. Also for $n\in \N$, let $\Jc_n$ denote the set of values taken by $L^n$ on its constancy intervals. Because $A$ has no isolated point, for each $p\ge 1$ there is an integer $N$ such that for all $n\ge N$, $\Dc_p\subset\Jc_n$, so that for any $n'\ge N$, $\| L^n-L^{n'}\|\le 2^{-p}$, where $\|\cdot\|$ is the supremum norm on $[0,1]$. This shows that $(L^n)$ is a Cauchy sequence for the supremum norm, and so converges uniformly on $[0,1]$ to a continuous function $L$. It is not difficult to see, using the stationarity of $(L^n)$ on $B$, that $L$ increases exactly on $A$.
\\
\\
In probability theory, local times are most often used to `count' the visits to a point or set by a stochastic process, e.g. the visit times of zero by Brownian motion. In this case, there are alternative ways of constructing the local time which ensure that it is at the same time adapted and unique up to a multiplicative constant. This contrasts with the recursive construction given above, where the local time is measurable, but is certainly neither adapted nor unique.

In the case of the standard Brownian motion $B$ (but also of many other Markov processes or semi-martingales) there exist several possible such constructions.
For example, if $N_\varepsilon(t)$ denotes the number of positive excursions of $B$ with height larger than $\varepsilon$, then a.s. for all $t$, $2\varepsilon \,N_\varepsilon (t)$ converges as $\varepsilon\downarrow 0$ to $L_t^0$, which is a local time at 0 for $B$. Also,
$$
L^0_t= \lim_{\varepsilon\downarrow 0} \frac{1}{2\varepsilon}\int_0^t\indic{|B_s|< \varepsilon}\,ds,
$$
where the limit again holds a.s. for all $t$. Even more interestingly, the local times of $B$ at all levels (not only 0) can be simultaneously constructed in a consistent manner. Namely, let $\nu_t$ denote the so-called occupation measure of $B$, that is for any non-negative Borel function $f$
$$
\int_\R f\,d\nu_t = \int_0^t f(B_s) \,ds.
$$ 
Then a.s. for all $t$, $\nu_t$ has a density $(L_t^a; a\in\R)$ w.r.t. Lebesgue measure. In addition, the doubly indexed process $(L_t^a; a\in\R)$ is bicontinuous and for each $a$, $(L_t^a;t\ge 0)$ is a local time for $B$ at level $a$.\\
\\
Not the least utility of an adapted local time $L$ (at 0 say) for a stochastic process $X$, is that it provides a way of indexing the excursions of $X$ away from 0 in their order of appearance. Indeed, if $J$ denotes the inverse $J$ of $L$, then to each jump $\Delta J_s$ of $J$ corresponds an excursion of $X$ away from 0 with length $\Delta J_s$, say $e_s$. In particular, if $X$ is a strong Markov process, $((s,\Delta J_s);s\ge 0)$ are the atoms of a Poisson point process in $[0,\infty)^2$. Furthermore, $((s, e_s);s\ge 0)$ are the atoms of a Poisson point process in $[0,\infty)\times \Ec$, where $\Ec$ is the space of paths with finite lifetime $V$ visiting $0$ at most at 0 and at $V$. The intensity measure of this Poisson point process is called \emph{Itô's excursion measure} and is the analogue to the common probability distribution of excursions when the visit times of 0 by $X$ form a discrete set.

\section{Definitions and examples}

\subsection{The real tree}

\begin{dfn} A real tree, or $\R$- tree,  is a complete metric space $(\tr,d)$ satisfying
\begin{itemize}
\item[(A)] Uniqueness of geodesics. For any $x,y\in \tr$, there is a unique isometric map $\phi_{x,y}:[0,d(x,y)]\to\tr$ such that $\phi_{x,y}(0)= 0$ and $\phi_{x,y}(d(x,y))=y$. 

The geodesic $\phi_{x,y}([0, d(x,y)])$, also called \emph{arc}, is denoted $\llbracket x,y\rrbracket$. 
\item[(B)] No loop. For any continuous, injective map $\psi:[0,1]\to\tr$, $\psi([0,1])= \llbracket \psi(0),\psi(1)\rrbracket$.
\end{itemize}
The root of an $\R$-tree $\tr$ is a distinguished element of $\tr$ denoted $\rho$.
\end{dfn}

\begin{thm}[Four points condition] The metric space $(\tr,d)$ is a real tree if it is complete, path-connected and satisfies for any $x_1, x_2, x_3, x_4\in\tr$
$$
d(x_1,x_2)+d(x_3,x_4)\le \max\left[d(x_1,x_3)+d(x_2,x_4), d(x_1,x_4)+d(x_2,x_3)\right]
$$
\end{thm}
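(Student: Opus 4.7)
The plan is to prove property (A) by constructing, for each pair $x, y \in \tr$, the isometric map $\phi_{x,y}$ from $[0, d(x,y)]$ to $\tr$ and then deducing its uniqueness; property (B) will follow by a short separate argument. Throughout, the sole algebraic tool is the four points condition, which I will apply systematically to quadruples of the form $(x, z_1, z_2, y)$ with $z_1, z_2$ on a path produced by path-connectedness. Completeness enters only to turn Cauchy sequences into points of $\tr$.

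Fix $x, y \in \tr$, set $L := d(x,y)$, and choose a continuous $\psi:[0,1]\to\tr$ with $\psi(0)=x$, $\psi(1)=y$. For $z\in\tr$ define
$$q(z) := \tfrac12\bigl(d(x,z)+L-d(z,y)\bigr),\qquad p(z) := \tfrac12\bigl(d(x,z)+d(z,y)-L\bigr) \ge 0,$$
so that $d(x,z) = q(z)+p(z)$ and $d(z,y) = L-q(z)+p(z)$. Applying the four points condition to $(x,z_1,z_2,y)$ and labelling the three sums
$$A := d(x,z_1)+d(z_2,y),\quad B := d(x,z_2)+d(z_1,y),\quad C := L + d(z_1,z_2),$$
one computes $A - B = -2(q(z_2) - q(z_1))$. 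Assuming $q(z_1) \le q(z_2)$ so $A \le B$, the requirement that the maximum be attained at least twice forces either $B = C$, or the degenerate case $q(z_1)=q(z_2)$ with $C \le B$. Translating via $p$ and $q$ yields the key identity
$$d(z_1, z_2) = |q(z_2) - q(z_1)| + p(z_1) + p(z_2)$$
in the generic case and the corresponding $\le$ inequality in the degenerate case. In particular $q\circ\psi$ is $1$-Lipschitz and, by the intermediate value theorem, surjective onto $[0, L]$.

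I next construct the isometry $\phi$. For each $t \in [0, L]$, set $\eta_t := \inf\{p(\psi(s)) : q(\psi(s)) = t\}$; the fiber is compact and $p$ continuous, so the infimum is attained. Any minimizing sequence $(s_n)$ yields $(\psi(s_n))$ Cauchy by the key identity applied with $q(\psi(s_n)) \to t$ and $p(\psi(s_n)) \to \eta_t$, hence converges by completeness to a point $\phi(t) \in \tr$ with $q(\phi(t)) = t$ and $p(\phi(t)) = \eta_t$. The crucial step is to show $\eta_t \equiv 0$. The strategy is to combine the upper bound $d(\phi(t_1), \phi(t_2)) \le |t_2 - t_1| + \eta_{t_1} + \eta_{t_2}$ from the identity with the triangle-inequality constraint $d(x,y) \le d(x,\phi(t_1)) + d(\phi(t_1),\phi(t_2)) + d(\phi(t_2),y)$, then iterate by running the whole construction on the sub-path $\psi|_{[s_1,s_2]}$ where $s_1 < s_2$ are approximate $\eta$-minimizers at values $t_1 < t_2$: this replaces $(x,y,L)$ by $(\phi(t_1),\phi(t_2),t_2-t_1+\eta_{t_1}+\eta_{t_2})$, and exploiting closedness of $\{t:\eta_t=0\}$ (which contains $0$ and $L$) together with the four points condition forces the set to exhaust $[0,L]$. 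Granted $\eta_t \equiv 0$, the key identity evaluated at $\phi(t_1), \phi(t_2)$ gives $d(\phi(t_1), \phi(t_2)) = |t_2 - t_1|$, so $\phi$ is the desired isometric parameterization of the arc $\llbracket x,y \rrbracket$.

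For uniqueness, given two such isometries $\phi_1, \phi_2 : [0, L] \to \tr$ with $\phi_i(0)=x$, $\phi_i(L)=y$, apply the four points condition to $(x, \phi_1(t), \phi_2(t), y)$: since $d(x, \phi_i(t)) = t$ and $d(\phi_i(t), y) = L-t$, the three sums become $L$, $L$, and $L + d(\phi_1(t), \phi_2(t))$, so the "max attained twice" forces $d(\phi_1(t), \phi_2(t)) = 0$. For (B), given an injective continuous $\psi$, the arc $\phi([0,L])$ is contained in $\psi([0,1])$ because $\eta_t = 0$ is attained on the path; conversely, any $z = \psi(s)$ with $p(z) > 0$ would satisfy $d(\phi(q(z)), z) = p(z) > 0$ by the key identity applied to the pair $(\phi(q(z)), z)$, and then the same Step 3 argument run on the two sub-paths of $\psi$ on either side of $s$ would exhibit two distinct preimages of $\phi(q(z))$ under $\psi$, contradicting injectivity; hence $p\circ\psi \equiv 0$ and $\psi([0,1]) = \llbracket\psi(0),\psi(1)\rrbracket$. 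The main obstacle I expect is the proof that $\eta_t \equiv 0$ in the construction of $\phi$: this is where completeness, path-connectedness, and the four points condition must be combined with the greatest care, and where I suspect the shortest rigorous write-up actually passes through the Gromov-product formulation of $0$-hyperbolicity rather than through the direct bookkeeping sketched here.
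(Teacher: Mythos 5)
The paper itself does not prove this theorem: it only cites \citet{DMT96} and \citet[p.2]{Duq06}, so there is no in-paper proof to compare against and your argument has to stand on its own. The scaffolding you set up is sound: $q$ and $p$ are the standard Gromov products in disguise, the ``two largest are equal'' reformulation of the four-point inequality is correctly extracted, the key identity $d(z_1,z_2)=|q(z_2)-q(z_1)|+p(z_1)+p(z_2)$ (with $\le$ in the degenerate case $q(z_1)=q(z_2)$) follows, and your uniqueness argument via the quadruple $(x,\phi_1(t),\phi_2(t),y)$ is short and fully rigorous.

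The genuine gap, which you yourself flag, is $\eta_t\equiv 0$, i.e., the existence of geodesics. The iteration you sketch is circular: to produce a point of the geodesic between $\phi(t_1)$ and $\phi(t_2)$ (endpoints of a maximal gap of $\{t:\eta_t=0\}$) inside the sub-path, you would need precisely the statement under proof applied to that pair, and the recursion has no base beyond $t\in\{0,L\}$. Combining the upper bound $d(\phi(t_1),\phi(t_2))\le(t_2-t_1)+\eta_{t_1}+\eta_{t_2}$ with the triangle-inequality lower bound $d(\phi(t_1),\phi(t_2))\ge(t_2-t_1)-\eta_{t_1}-\eta_{t_2}$ never squeezes $\eta$ to zero on its own; one would need a strict-improvement estimate (say, that passing to a sub-path decreases $\sup_t\eta_t$ by a definite factor), and none is supplied. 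A secondary slip: a minimizing sequence in the fiber $\{q\circ\psi=t\}$ is not Cauchy ``by the key identity'' as claimed, since in the fiber the identity only yields $d(\psi(s_n),\psi(s_m))\le p(\psi(s_n))+p(\psi(s_m))\to 2\eta_t$, not $\to 0$; compactness of the fiber (which you also invoke) gives existence of a minimizer, but until $\eta_t=0$ is established $\phi(t)$ is not canonically a single point. In sum, everything you write out in full is correct, but the heart of the theorem --- where completeness and path-connectedness must combine to manufacture a geodesic --- remains a plan rather than a proof, and the sketched bookkeeping does not appear to close without a further idea (e.g., passing to arcs, or a minimization over paths with a genuine compactness/lower-semicontinuity step).
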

\noindent
For references on real trees and the paternity of the last theorem, see \cite{DMT96} and \citet[p.2]{Duq06}.

\begin{dfn}
For any $x\in\tr$, the \emph{multiplicity}, or \emph{degree} of $x$ denotes the number of connected components of $\tr\setminus\{x\}$.
\begin{itemize}
\item $m(x)=1$ : $x$ is called a \emph{leaf}
\item $m(x)=2$ : $x$ is an \emph{internal vertex}
\item $m(x)\ge 3$ : $x$ is a \emph{branching point}.
\end{itemize}
The set of leaves of $\tr$ is denoted $\text{\rm Lf}(\tr)$ and the set of branching points $\text{\rm Br}(\tr)$.
The \emph{skeleton} of $\tr$ is $\text{\rm Sk}(\tr):=\tr\setminus\text{\rm Lf}(\tr)$.
\end{dfn}
\begin{exo} Prove that for any sequence $(x_n)$ dense in the $\R$-tree $\tr$,
$$
\text{\rm Sk}(\tr) = \bigcup_n\ \llbracket \rho, x_n\llbracket.
$$
\end{exo}
\noindent\textbf{From now on, we will assume that $\tr$ denotes a \emph{binary} $\R$-tree, that is, $m(x)\le 3$ for all $x\in\tr$.} We will also assume that $m(\rho)=1$.
We will further need the following notation and terminology.
\begin{itemize}
\item Mrca. For any $x,y\in\tr$ the \emph{most recent common ancestor} (in short mrca) of $x$ and $y$, denoted $x\wedge y$, is the unique $z\in \tr$ such that $\llbracket\rho,x\rrbracket \cap \llbracket\rho,y\rrbracket= \llbracket\rho,z\rrbracket$.
\item Partial order. For any $x,y\in\tr$, $y$ is said to \emph{descend} from $x$, and then $x$ is called an \emph{ancestor} of $y$ if $x\in\llbracket\rho,y\rrbracket$, and this is denoted $x\preceq y$. 
\item Lebesgue measure. Whenever $\tr$ is locally compact, there is a unique measure $\ell$ on the Borel $\sigma$-field of $\tr$, called Lebesgue measure or \emph{length measure}, such that for any $x,y\in\tr$, $\ell(\llbracket x,y\rrbracket) = d(x,y)$ (see Section 4.3.5 in \citealt{EvaBook}).
\item Orientation. For any $x\in\text{\rm Br}(\tr)$, $\tr\setminus\{x\}$ has 3 connected components: the one containing $\rho$ and two others, which are assumed to be labelled as the \emph{left subtree} $L_x$ and the \emph{right subtree} $R_x$.  
\end{itemize}

\subsection{First constructive example: Connecting segments}
\label{subsec:segments}

Fix some infinite-dimensional, complete vector space $\mathcal X$ and let $(\gamma_n)$ be a linearly independent, countable family of $\mathcal X$. Then construct a real tree by recursively connecting segments as follows.
\begin{enumerate}
\item Start with a segment colinear to $\gamma_1$;
\item Given a set consisting of the first $n$ segments properly connected, draw a uniform point (according to Lebesgue measure) in this set and glue one of the extremities of a segment colinear to $\gamma_{n+1}$ to this point.
\item When all segments are connected, take the closure $\tr$ of the resulting set.   
\end{enumerate}
The (random) set $\tr$ is obviously path connected and satisfies the four points condition. So it is a real tree iff it is complete, which holds iff the sequence $(\|\gamma_n\|)$ vanishes (see \citealt[Lemma 4.33]{EvaBook}). In this case $\tr$ is even compact.

\subsection{Second constructive example: Chronological trees}
We start with a well-known coding of discrete, rooted (plane) trees, sometimes denoted UHN, for Ulam--Harris--Neveu.
\begin{dfn}
A (rooted) \emph{discrete tree} $\tc$ is a subset of $\Uc:=\bigcup_{n\in \Z_+}\N^n$ (finite words), with the convention $\N^0 = \{\varnothing\}$, whose elements are called \emph{vertices}, satisfying
\begin{itemize}
\item[(i)] $\varnothing\in\tc$
\item[(ii)] if $v=uj\in\tc$, then $u\in\tc$
\item[(iii)] for any $u\in\tc$, there is $K_u\in\Z_+\cup\{+\infty\}$ such that
$$
uj\in \tc\Leftrightarrow 1\le j < K_u
$$
\end{itemize}
The vertex $\varnothing$ is called the root of $\tc$.
\end{dfn}
Let us give some further terminology and notation.
\begin{itemize}
\item Edge. An \emph{edge} is any (non-ordered) pair $\{u,uj\}$ such that $u\in\tc$, $uj\in\tc$. 
\item Partial order. The vertex $v$ is said to \emph{descend} from $u$, and then $u$ is called an \emph{ancestor} of $v$ if there is a finite word $w$ such that $v=uw$, and this is denoted $u\preceq v$.
\item Generation. The number of letters in the word $u$ is called its \emph{length}, or \emph{generation}, and denoted $|u|$. 
\item Ancestor. The ancestor of $u$ at generation $k$ is denoted $u|k$.
\item Mrca. The \emph{most common recent ancestor} (mrca) of $u$  and $v$ is 
$$
\displaystyle u\wedge v:=\arg\max_{w\in\tc}\{|w|:w\preceq u, w\preceq v\}.
$$
\item Distance. The graph distance $d$ in $\tc$ can be written as 
$$
d(u,v)=|u| +|v| -2|u\wedge v|\qquad u,v\in\tc.
$$  
\item Boundary. We denote by $\partial \tc$ the \emph{boundary} of $\tc$ defined as
$$
\partial \tc:=\{u\in\N^{\N} : \forall n\in\N,\ u|n\in \tc\}.
$$
\end{itemize}

\begin{dfn}
Let $\tc$ be a discrete, rooted tree. Assume that each vertex $u\in \tc$ is endowed with a date of birth $\alpha(u)\in[0,+\infty)$, a date of death $\omega(u)\in (\alpha(u),+\infty]$ and a lifetime duration $\zeta(u):=\omega(u)-\alpha(u)$, satisfying
\begin{itemize}
\item[(i)] $\alpha(\varnothing)=0$
\item[(ii)] for any $u\in\tc$, for any $j\in\N$,
$$
uj\in\tc\Rightarrow \alpha(u)<\alpha(uj)\le\omega(u).
$$
\item[(iii)] for any $u\in\partial \tc$,
$$
\alpha(u):=\lim_{n\uparrow\infty}\uparrow\alpha(u|n)<\infty\Rightarrow \lim_{n\to\infty}\zeta(u|n)=0,
$$
\end{itemize}
Then we can define the \emph{chronological tree} $\tr$ as the subset of $\U:=\Uc\times[0,\infty]$ defined by
$$
\tr:=\bigcup_{u\in\tc}\{u\}\times(\alpha(u),\omega(u)]\cup \bigcup_{u\in\partial\tc:\alpha(u)<\infty}\{u\}\times\{\alpha(u)\}
$$
The chronological tree is naturally rooted at $\rho:=(\varnothing,0)$.
If in addition 
$$
\alpha(ui)\not=\alpha(uj) \qquad u\in\tc, i\not=j,
$$
then the tree $\tc$ is said \emph{binary}.
\end{dfn}
We will further need the following notation and terminology. We use (here only) the notation $p_2:\U\to [0,\infty]$ for the canonical projection $p_2((u,t))=t$.
\begin{itemize}
\item Partial order. The point $y=(v,t)\in \tr$ is said to \emph{descend} from $x=(u,s)\in\tr$, and then $x$ is called an \emph{ancestor} of $y$, if either $u=v$ and $s\le t$, or $u\not=v$, $u\preceq v$ and $s\le \alpha(v)$. This is denoted $x\preceq y$.
\item Mrca. The \emph{most common recent ancestor} of $x\in\tr$  and $y\in\tr$ is 
$$
\displaystyle x\wedge y:=\arg\max_{z\in\tr}\{p_2(z):z\preceq x, z\preceq y\}.
$$
\item Distance. We still denote by $d$ the graph distance in $\tr$, defined by
$$
d(x,y) = p_2(x)+p_2(y)-2p_2(x\wedge y)\qquad x,y\in\tr.
$$
\end{itemize}
\begin{exo} Prove the following statement.
\end{exo}

\begin{thm} The metric space $(\tr, d)$ is a locally compact $\R$-tree. 
\end{thm}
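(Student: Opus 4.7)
My plan is to apply the four-point characterization theorem by verifying completeness, path-connectedness and the four-point inequality, and then to handle local compactness separately. Hypothesis (iii) on decay of lifetimes along infinite rays plays the crucial role in the nontrivial steps.

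I would begin by making geodesics explicit. For $x=(u,s)$ and $y=(v,t)$, the set of common ancestors $\{z\in\tr:z\preceq x,\,z\preceq y\}$ is totally ordered for $\preceq$ and admits a maximal element $x\wedge y=(w,r^\ast)$, where $w$ is the discrete mrca of $u,v$ in $\tc$ and $r^\ast$ is determined by $s$, $t$ and the relevant birth times $\alpha(\cdot)$. A geodesic from $x$ to $y$ is the concatenation of vertical segments along successive ancestral words, from $x$ up to $x\wedge y$ and then down to $y$; its length equals $p_2(x)+p_2(y)-2r^\ast=d(x,y)$. This yields simultaneously the triangle inequality (so $d$ is a genuine metric) and path-connectedness.

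For the four-point inequality, the union $\bigcup_{i<j}\llbracket x_i,x_j\rrbracket$ is a finite subtree with at most four leaves and at most two internal branching vertices. After relabeling into the standard ``two-cherry'' configuration, a direct computation of the three sums $d(x_1,x_2)+d(x_3,x_4)$, $d(x_1,x_3)+d(x_2,x_4)$, $d(x_1,x_4)+d(x_2,x_3)$ in terms of the depths of pairwise mrcas shows that the two largest coincide, which is exactly what is required.

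Completeness is the main obstacle. Let $(x_n)=(u_n,t_n)$ be Cauchy. The distance formula $d(x_n,x_m)=t_n+t_m-2p_2(x_n\wedge x_m)$ gives $t_n\to t^\ast$ for some $t^\ast\ge 0$ and $p_2(x_n\wedge x_m)\to t^\ast$. The prefixes $u_n|k$ stabilize for every $k$ up to some (possibly infinite) maximal level $K^\ast$, defining a word $w$ of length $K^\ast$ in $\tc\cup\partial\tc$. If $K^\ast<\infty$, then the discrete mrca $u_n\wedge_{\tc} u_m$ eventually equals $w$ along suitable pairs, hence $p_2(x_n\wedge x_m)\le\omega(w)$, which forces $t^\ast\le\omega(w)$; one checks that $x_n\to(w,t^\ast)\in\tr$, with $(w,\alpha(w))$ identified with the corresponding point of the parent branch when $t^\ast=\alpha(w)$. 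If $K^\ast=\infty$, then $w\in\partial\tc$ with $\alpha(w|n)\uparrow t^\ast<\infty$; hypothesis (iii) then yields $\zeta(w|n)\to 0$, which forces $(w,t^\ast)\in\tr$ by the very construction, and $d(x_n,(w,t^\ast))\to 0$ is direct.

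For local compactness, a small closed ball $\overline{B}(x,\varepsilon)$ around $x=(u,s)$ decomposes as a finite union of arcs: a piece of the branch $\{u\}$ through $x$, possibly a top piece of the parent branch, and initial pieces of child branches $\{uj\}$ whose roots $(u,\alpha(uj))$ lie in the ball. Only finitely many children can contribute a segment of length larger than $\varepsilon/2$: otherwise one could extract an infinite descending ray along which birth times accumulate in a bounded interval while lifetimes remain bounded away from zero, contradicting hypothesis (iii). The resulting finite metric graph is compact, providing a compact neighborhood of $x$.
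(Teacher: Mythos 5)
The paper leaves this statement as an exercise and gives no proof, so there is nothing to compare your write-up against; I will evaluate it on its own.

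The parts dealing with the real-tree structure (explicit geodesics through $x\wedge y$, triangle inequality, path-connectedness, four-point condition, completeness via the stabilizing-prefix word $w$ together with hypothesis (iii) on rays) are sound in spirit, modulo the usual tightening: one should pass to a subsequence when defining $K^\ast$, handle separately the case $u_n=w$ for infinitely many $n$, and treat $t^\ast=\alpha(w)$ by identifying $(w,\alpha(w))$ with the corresponding point of the parent branch. These are fixable.

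The local-compactness paragraph, however, has a genuine gap, and it is not cosmetic. You invoke hypothesis (iii) to rule out infinitely many child branches contributing segments of length bigger than $\varepsilon/2$, arguing that this would produce an infinite descending ray with bounded birth times and non-vanishing lifetimes. But hypothesis (iii) concerns elements of $\partial\tc$, i.e.\ infinite \emph{ancestral chains}; an infinite family of \emph{siblings} $uj$ whose birth times $\alpha(uj)$ accumulate at some $s\in(\alpha(u),\omega(u)]$ while $\zeta(uj)\ge\delta>0$ does not form a ray, so (iii) says nothing about it. Concretely, take $\tc=\{\varnothing,1,2,\dots\}$, $\omega(\varnothing)=2$, $\alpha(n)=1-1/n$, $\zeta(n)=1/2$: then $\partial\tc=\varnothing$, (i)--(iii) hold vacuously, yet the points $(n,\alpha(n)+\varepsilon/4)$ all lie in $B\bigl((\varnothing,1),\varepsilon\bigr)$ and are pairwise at distance $>\varepsilon/2$, so no neighbourhood of $(\varnothing,1)$ is totally bounded. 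Thus either the theorem implicitly assumes a local finiteness condition on births with non-negligible lifetime (which is indeed automatic for splitting trees with $\int(r\wedge1)\Lambda(dr)<\infty$), or it must be weakened to ``complete $\R$-tree''. Either way your argument as written does not close this case. A secondary issue in the same paragraph: a small ball around $(u,s)$ is not only made of a piece of $\{u\}$, a piece of the parent branch and initial pieces of the \emph{direct} children $\{uj\}$; descendants of arbitrary depth can enter the ball (a grandchild $(ujk,t)$ is within $\varepsilon$ of $(u,s)$ as soon as $(s-\alpha(uj))+(t-\alpha(uj))<\varepsilon$), so the decomposition into finitely many arcs must account for all of $\text{\rm Sk}(\tr)\cap\overline B(x,\varepsilon)$, not just the first generation.
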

\begin{exo} Check that the notions of partial order and mrca in the chronological tree $\tr$ coincide with the corresponding notions in $\R$-trees. Characterize the leaves and branching points of $\tr$. 
\end{exo}
\begin{rem}
For chronological trees represented in the plane with vertical edges, there is a natural orientation stemming from the rule that `daughters sprout to the right of their mother' (see Fig \ref{fig:smalltree} and see \citealt{Lam10} for a rigorous definition). We will always assume that chronological trees are endowed with this specific orientation. 
\end{rem}
 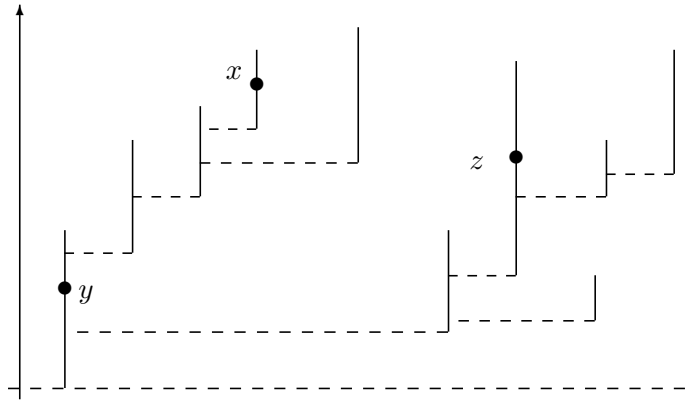
\begin{figure}[ht]
\unitlength 1.5mm 
\linethickness{0.6pt}
\begin{picture}(65,41)(-10,0)
\put(6,1){\vector(0,1){35}}
\put(4.96,1.96){\line(1,0){.984}}
\put(6.92,1.96){\line(1,0){.984}}
\put(8.89,1.96){\line(1,0){.984}}
\put(10.86,1.96){\line(1,0){.984}}
\put(12.82,1.96){\line(1,0){.984}}
\put(14.79,1.96){\line(1,0){.984}}
\put(16.76,1.96){\line(1,0){.984}}
\put(18.73,1.96){\line(1,0){.984}}
\put(20.69,1.96){\line(1,0){.984}}
\put(22.66,1.96){\line(1,0){.984}}
\put(24.63,1.96){\line(1,0){.984}}
\put(26.6,1.96){\line(1,0){.984}}
\put(28.56,1.96){\line(1,0){.984}}
\put(30.53,1.96){\line(1,0){.984}}
\put(32.5,1.96){\line(1,0){.984}}
\put(34.46,1.96){\line(1,0){.984}}
\put(36.43,1.96){\line(1,0){.984}}
\put(38.4,1.96){\line(1,0){.984}}
\put(40.37,1.96){\line(1,0){.984}}
\put(42.33,1.96){\line(1,0){.984}}
\put(44.3,1.96){\line(1,0){.984}}
\put(46.27,1.96){\line(1,0){.984}}
\put(48.23,1.96){\line(1,0){.984}}
\put(50.2,1.96){\line(1,0){.984}}
\put(52.17,1.96){\line(1,0){.984}}
\put(54.14,1.96){\line(1,0){.984}}
\put(56.1,1.96){\line(1,0){.984}}
\put(58.07,1.96){\line(1,0){.984}}
\put(60.04,1.96){\line(1,0){.984}}
\put(62.01,1.96){\line(1,0){.984}}
\put(63.97,1.96){\line(1,0){.984}}
\put(10,16){\line(0,-1){14}}
\put(16,24){\line(0,-1){10}}
\put(22,19){\line(0,1){8}}
\put(27,25){\line(0,1){7}}
\put(36,22){\line(0,1){12}}
\put(44,7){\line(0,1){9}}
\put(50,12){\line(0,1){19}}
\put(15.96,13.96){\line(-1,0){.857}}
\put(14.24,13.96){\line(-1,0){.857}}
\put(12.53,13.96){\line(-1,0){.857}}
\put(10.81,13.96){\line(-1,0){.857}}
\put(21.96,18.96){\line(-1,0){.857}}
\put(20.24,18.96){\line(-1,0){.857}}
\put(18.53,18.96){\line(-1,0){.857}}
\put(16.81,18.96){\line(-1,0){.857}}
\put(26.96,24.96){\line(-1,0){.833}}
\put(25.29,24.96){\line(-1,0){.833}}
\put(23.62,24.96){\line(-1,0){.833}}
\put(35.96,21.96){\line(-1,0){.933}}
\put(34.09,21.96){\line(-1,0){.933}}
\put(32.22,21.96){\line(-1,0){.933}}
\put(30.36,21.96){\line(-1,0){.933}}
\put(28.49,21.96){\line(-1,0){.933}}
\put(26.62,21.96){\line(-1,0){.933}}
\put(24.76,21.96){\line(-1,0){.933}}
\put(22.89,21.96){\line(-1,0){.933}}
\put(49.96,11.96){\line(-1,0){.857}}
\put(48.24,11.96){\line(-1,0){.857}}
\put(46.53,11.96){\line(-1,0){.857}}
\put(44.81,11.96){\line(-1,0){.857}}
\put(43.96,6.96){\line(-1,0){.996}}
\put(41.96,6.96){\line(-1,0){.996}}
\put(39.97,6.96){\line(-1,0){.996}}
\put(37.98,6.96){\line(-1,0){.996}}
\put(35.99,6.96){\line(-1,0){.996}}
\put(33.99,6.96){\line(-1,0){.996}}
\put(32,6.96){\line(-1,0){.996}}
\put(30.01,6.96){\line(-1,0){.996}}
\put(28.02,6.96){\line(-1,0){.996}}
\put(26.02,6.96){\line(-1,0){.996}}
\put(24.03,6.96){\line(-1,0){.996}}
\put(22.04,6.96){\line(-1,0){.996}}
\put(20.05,6.96){\line(-1,0){.996}}
\put(18.06,6.96){\line(-1,0){.996}}
\put(16.06,6.96){\line(-1,0){.996}}
\put(14.07,6.96){\line(-1,0){.996}}
\put(12.08,6.96){\line(-1,0){.996}}
\put(58,19){\line(0,1){5}}
\put(64,21){\line(0,1){11}}
\put(57,8){\line(0,1){4}}
\put(56.96,7.96){\line(-1,0){.929}}
\put(55.1,7.96){\line(-1,0){.929}}
\put(53.24,7.96){\line(-1,0){.929}}
\put(51.38,7.96){\line(-1,0){.929}}
\put(49.53,7.96){\line(-1,0){.929}}
\put(47.67,7.96){\line(-1,0){.929}}
\put(45.81,7.96){\line(-1,0){.929}}
\put(57.96,18.96){\line(-1,0){.889}}
\put(56.18,18.96){\line(-1,0){.889}}
\put(54.4,18.96){\line(-1,0){.889}}
\put(52.62,18.96){\line(-1,0){.889}}
\put(50.84,18.96){\line(-1,0){.889}}
\put(63.96,20.96){\line(-1,0){.857}}
\put(62.24,20.96){\line(-1,0){.857}}
\put(60.53,20.96){\line(-1,0){.857}}
\put(58.81,20.96){\line(-1,0){.857}}
\put(25,30){\makebox(0,0)[cc]{$x$}}
\put(27,29){\circle*{1.12}}
\put(10,10.88){\circle*{1.12}}
\put(50,22.5){\circle*{1.12}}
\put(11.88,10.25){\makebox(0,0)[cc]{$y$}}
\put(46.5,22){\makebox(0,0)[cc]{$z$}}
\end{picture}
\caption{A binary chronological tree, where edges are vertical, time flows upward, dashed lines represent filiation and daughters sprout to the right of their mother, conferring a natural orientation to the tree. The three points $x, y, z$ satisfy  $y\preceq x$ and $x\le y \le z$.
}
\label{fig:smalltree}
\end{figure}

\section{The contour process}

\subsection{From the $\R$-tree to its contour process}

Let $\tr$ be a binary, oriented $\R$-tree.
\begin{dfn}
\label{dfn:ord}
The relation $\le$ on $\tr$ is defined as follows. For any $x,y\in\tr$, 
$$
x\preceq y \Rightarrow y\le x,
$$
otherwise $x\wedge y \in \text{\rm Br}(\tr)$ and
$$
\left\{
\begin{array}{rcl}
x\in L_{x\wedge y}&\Rightarrow &x\le y\\
x\in R_{x\wedge y}&\Rightarrow &y\le x.
\end{array}
\right.
$$
\end{dfn}
\begin{exo}
Prove that $\le$ is a total order on $\tr$, that $\rho=\max \tr$, and find $\min\tr$ on the example shown in Figure \ref{fig:smalltree}.
\end{exo}
\begin{exo}
Prove that for any $x\in\tr$,
$$
\pi(x):=\{y\in\tr:y\le x\}
$$
is a Borel set.
\end{exo}
\noindent
Now we assume that we are given a finite measure $\mu$ on the Borel $\sigma$-field of $\tr$, called \emph{mass measure}, satisfying
\begin{itemize} 
\item[\textbf{Mes 1}] for any $x\le y\in\tr$,
$$
x\not=y \Rightarrow \mu(\pi(x))<\mu(\pi(y)).
$$
\item[\textbf{Mes 2}] $\mu$ is diffuse (no atom).
\end{itemize}
\begin{rem}
Whenever $\ell(\tr)<\infty$, the length measure $\ell$ is a natural example of finite measure which satisfies both  {\rm \textbf{Mes 1}} and {\rm \textbf{Mes 2}}. 
\end{rem}
Now define $\varphi:(\tr,\le, \mu)\rightarrow ([0,\mu(\tr)],\le, \text{\rm Leb})$ by
$$
\varphi(x):=\mu(\pi(x))\qquad x\in\tr,
$$
which always makes sense, since $\pi(x)$ is a Borel set of $\tr$. It is clear that $\varphi$ is one-to-one (by \textbf{Mes 1}), preserves the order and the measure. But it is not clear whether it is onto.
\begin{exo}
Display an example of a real tree that has no minimal element, and so for which $0\not\in \varphi(\tr)$.
\end{exo}
\begin{lem}
The set $D:=\varphi(\tr)$ is dense in $[0,\mu(\tr)]$.
\end{lem}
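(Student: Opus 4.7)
The plan is to argue by contradiction: suppose there exist $a<b$ in $[0,\mu(\tr)]$ with $(a,b)\cap D=\varnothing$, and exploit the strict order-preservation and injectivity of $\varphi$ together with the diffuseness of $\mu$ to force a set of positive $\mu$-measure to contain at most two points.

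First I would tighten the gap by setting $a':=\sup(D\cap[0,a])$ (with the convention $a'=0$ if $D\cap[0,a]=\varnothing$) and $b':=\inf(D\cap[b,\mu(\tr)])$; the infimum is well defined because $\varphi(\rho)=\mu(\tr)\in D$. By construction $a'\le a<b\le b'$ and $(a',b')\cap D=\varnothing$. Then I would extract a (possibly empty) sequence $(x_n)$, nondecreasing in the order $\le$, with $\varphi(x_n)\uparrow a'$, and a sequence $(y_n)$, nonincreasing in $\le$, with $\varphi(y_n)\downarrow b'$; these exist because $\varphi$ is a strictly order-preserving bijection onto $D$, so any monotone real sequence in $D$ converging to $a'$ or $b'$ lifts to a monotone sequence in $\tr$.

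The heart of the proof is to compare $A:=\bigcup_n\pi(x_n)$ and $K:=\bigcap_n\pi(y_n)$, both Borel by the preceding exercise. By continuity of the finite measure $\mu$ from below and from above,
$$
\mu(A)=\lim_n\varphi(x_n)=a'\qquad\text{and}\qquad \mu(K)=\lim_n\varphi(y_n)=b'.
$$
Moreover $A\subseteq K$, since any $z\in\pi(x_n)$ satisfies $\varphi(z)\le a'<b'\le\varphi(y_m)$ for every $m$, giving $z<y_m$ in the total order and hence $z\in\pi(y_m)$. Consequently $\mu(K\setminus A)=b'-a'>0$.

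On the other hand, any $z\in K\setminus A$ satisfies $z\le y_n$ for all $n$, so $\varphi(z)\le b'$; and $z\notin\pi(x_n)$ for any $n$, so (since $\le$ is total) $z>x_n$, whence $\varphi(z)>\varphi(x_n)$ and $\varphi(z)\ge a'$. Thus $\varphi(z)\in[a',b']$, and the gap $(a',b')\cap D=\varnothing$ forces $\varphi(z)\in\{a',b'\}$. By injectivity of $\varphi$, $K\setminus A$ contains at most two points, so $\mu(K\setminus A)=0$ by diffuseness, contradicting $b'-a'>0$. The only subtle point is the degenerate case $a'=0$ with no approximating sequence $(x_n)$, where one sets $A:=\varnothing$ (so $\mu(A)=0=a'$) and the argument goes through verbatim.
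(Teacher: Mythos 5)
Your proof is correct and rests on the same ingredients as the paper's argument: pulling back monotone sequences in $D$ via the order-preserving injection $\varphi$ to get nested $\pi$-sets, observing that the total order together with \textbf{Mes 1} forces the set-theoretic difference (your $K\setminus A$, the paper's $L\setminus G_t$) to have at most finitely many elements, and invoking \textbf{Mes 2} plus continuity of the finite measure $\mu$ to conclude. The paper proceeds directly (fixing $t$ and showing $s_t=i_t=t$) whereas you frame it as a contradiction from a hypothetical gap $(a',b')$, and you build the inner set $A$ explicitly as $\bigcup_n\pi(x_n)$ rather than identifying $G_t=\{x:\varphi(x)<t\}$ with a $\pi$-set, but these are cosmetic reformulations of the same argument.
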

\begin{proof}

Let $t\in(0,\mu(\tr))$. Set $G_t:=\{x\in\tr: \varphi(x)< t\}$ and $D_t:=\{y\in\tr: \varphi(y)\ge t\}$. Also set $s_t:=\sup\{\varphi(x):x\in G_t\}$ and $i_t:=\inf\{\varphi(y):y\in D_t\}$, so that in particular $s_t\le t\le i_t$.

First notice that for any $x\in G_t$, $\pi(x)\subset G_t$, so that $G_t$ is necessarily of the form $\pi(x)$ or $\pi(x)\setminus \{x\}$, which yields $\mu(G_t)=s_t$.

Now by definition of $i_t$, there is some $\le$-decreasing sequence $(y_n)$ of elements of $D_t$ such that $\lim_n\downarrow\varphi(y_n) = i_t$. Since $(y_n)$ is decreasing, the sequence $\pi(y_n)$ is also decreasing, let $L$ denote its limit. 
If there were two elements in $L\setminus G_t$, say $z_1<z_2$, we would have 
$t\le \varphi(z_1)<\varphi(z_2)\le i_t$ by \textbf{Mes 1}. Now this contradicts the definition of $i_t$, since $L\setminus G_t\subset D_t$, so $L\setminus G_t$ contains at most one element. By \textbf{Mes 2}, this shows that $\mu(L)= \mu(G_t)$. Now recall that $\mu(G_t)=s_t$, so that
$$
i_t =\lim_n\downarrow\varphi(y_n) = \lim_n\downarrow \mu(\pi(y_n))= \mu(L)=\mu(G_t)= s_t,
$$
which shows that $i_t=s_t=t$. 
\end{proof}
\noindent
In light of the previous lemma, we can define $\phi: [0,\mu(\tr)] \rightarrow \tr$ as
$$
\phi (t) :=\lim_{ s\downarrow t,\ s\in D} \varphi^{-1}(s),
$$
which we call the \emph{exploration process}. The existence and the uniqueness of this limit come from the fact that all monotonic sequences of $\tr$ do converge (see \citealt{LUB15}).
Of course, $\phi$ does not preserve the order any longer.

\begin{figure}[ht]
\centering
\unitlength 1.5mm 
\linethickness{0.2pt}
\input{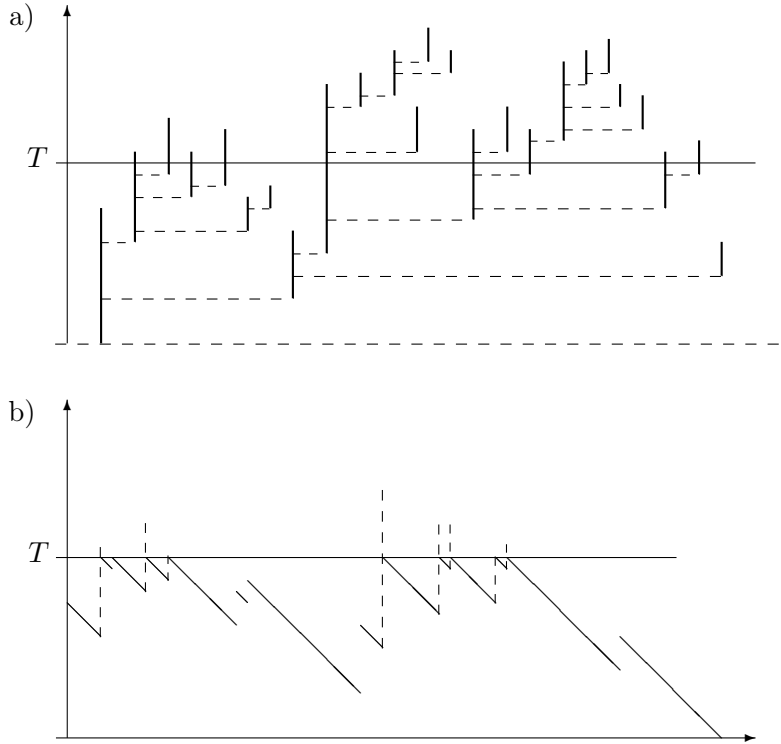}
\caption{ A chronological tree (a) and (b) the jumping contour process of its truncation below $T$, where $\mu$ is chosen equal to $\ell$.
}
\label{fig:jccp}
\end{figure}

\begin{thm}[\citealt{LUB15}]
The exploration process is the only càdlàg extension to $\varphi^{-1}$. 
The mapping $h:[0,\mu(\tr)]\to[0,\infty)$ defined by $h(s):=d(\rho,\phi(s))$ is called the \emph{jumping contour process} of $\tr$. It is càdlàg and has no negative jumps.
\end{thm}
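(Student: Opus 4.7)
I will handle the two parts of the theorem in sequence. For the first (that $\phi$ is the unique c\`adl\`ag extension of $\varphi^{-1}$) I split into: (i) $\phi$ extends $\varphi^{-1}$; (ii) right-continuity; (iii) existence of left limits; (iv) uniqueness. For (i), fix $t=\varphi(x)\in D$ and take any $\le$-decreasing sequence $s_n\downarrow t$ in $D$: the sets $\pi(\varphi^{-1}(s_n))\supset\pi(x)$ decrease with $\mu$-masses $s_n\downarrow t=\mu(\pi(x))$, and arguing as in the density lemma via $\sigma$-additivity together with \textbf{Mes 1}--\textbf{Mes 2}, the metric $\le$-monotone limit of $\varphi^{-1}(s_n)$ must equal $x$, hence $\phi(t)=x=\varphi^{-1}(t)$. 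For (ii), right-continuity at an arbitrary $t$ follows by a subsequence extraction: for $t_n\downarrow t$, pick $s_n\in D$ with $s_n>t_n$ and $d(\varphi^{-1}(s_n),\phi(t_n))<1/n$, extract from any subsequence a strictly decreasing sub-subsequence of $s_n$'s tending to $t$, and conclude $\varphi^{-1}(s_{n_k})\to\phi(t)$ from the defining limit, whence $\phi(t_{n_k})\to\phi(t)$. Existence and sequence-independence of $\phi(s^-):=\lim_{r\uparrow s,\,r\in D}\varphi^{-1}(r)$ for (iii) come from the standing $\le$-monotone convergence assumption on $\tr$, any two approximating sequences being interlaced into a common one. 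Claim (iv) is immediate: a c\`adl\`ag extension must equal $\varphi^{-1}$ on the dense set $D$, hence must equal $\phi$ by right-continuity. Finally, $d(\rho,\cdot)$ is $1$-Lipschitz on $\tr$, so $h=d(\rho,\phi(\cdot))$ inherits the c\`adl\`ag property from $\phi$.

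The substantial step is the absence of negative jumps, which I will reduce to the geometric assertion
\[
\phi(s^-)\preceq\phi(s)\qquad\text{for every }s\in[0,\mu(\tr)];
\]
granted this, the arc identity $d(\rho,\phi(s))=d(\rho,\phi(s^-))+d(\phi(s^-),\phi(s))$ yields $h(s^-)\le h(s)$. To prove the ancestor relation, I would first rule out jumps away from branch points: if $\phi(s^-)=a$ is a leaf or an internal (degree-$2$) vertex, a local analysis of the sequences $y_n=\varphi^{-1}(r_n)$, $r_n\uparrow s$, and $z_m=\varphi^{-1}(r_m')$, $r_m'\downarrow s$, forces both metric limits to coincide with $a$, so no jump occurs. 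For the remaining case, where $a$ is a branching point, the $\le$-increasing sequence $y_n\to a$ is eventually confined to one of the two descending subtrees of $a$, say $L_a$: the parent subtree and $R_a$ both lie $\ge a$ in $\le$, so no $\le$-increasing sequence living there can metric-converge to $a$. The sequence $z_m$ is $\le$-above every $y_n$, hence $\le$-above $L_a$; the $\le$-smallest upper bound of $L_a$ is the $\le$-minimum of $R_a$, which is a $\preceq$-descendant of $a$. Therefore $\phi(s)$ descends from $a=\phi(s^-)$, as required.

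The main obstacle is this last geometric step. The metric limit of a $\le$-monotone sequence can genuinely differ from its order-theoretic supremum (on a $Y$-tree, an increasing sequence on the left branch has metric limit the branching point $b$ but $\le$-sup the right leaf $\ell_R$), so one cannot conclude $\phi(s^-)\le\phi(s)$ in $\le$ from order-theoretic monotone convergence alone. The identification of $\phi(s^-)$ as a branching point and the location of $\phi(s)$ in the sibling descending subtree will need the binary hypothesis $m(x)\le 3$, the root convention $m(\rho)=1$, the orientation convention of Definition~\ref{dfn:ord}, and continuity of the $\preceq$-mrca operator in $\R$-trees together with closedness of descendant subtrees in the metric.
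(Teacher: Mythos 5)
The paper states this theorem with a citation to \citet{LUB15} and does not reproduce a proof, so there is no in-paper argument to compare yours against; I can only assess the plan on its own merits. Its architecture is sound: the split into extension / c\`adl\`ag regularity / uniqueness, the observation that $d(\rho,\cdot)$ is $1$-Lipschitz so $h$ inherits the c\`adl\`ag property from $\phi$, and above all the reduction of ``no negative jumps'' to the ancestral relation $\phi(s^-)\preceq\phi(s)$, which via $d(\rho,\phi(s))=d(\rho,\phi(s^-))+d(\phi(s^-),\phi(s))$ gives $h(s^-)\le h(s)$. You are also right to isolate as the crux the fact that the metric limit of a $\le$-monotone sequence may differ from its $\le$-supremum; that is indeed where the real content lives.

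However, the branching-point case of your sketch has a sign error that breaks the argument. Under Definition \ref{dfn:ord}, $x\preceq y\Rightarrow y\le x$: ancestors are $\le$-\emph{large}, descendants $\le$-\emph{small}, so both descending subtrees $L_a$ and $R_a$ of a branching point $a$ satisfy $L_a\le a$ and $R_a\le a$, and the order near $a$ reads $L_a<R_a<a<\llbracket\rho,a\llbracket$. Your claim that ``the parent subtree and $R_a$ both lie $\ge a$ in $\le$, so no $\le$-increasing sequence living there can metric-converge to $a$'' is therefore false on two counts. First, a sequence climbing the edge of $R_a$ toward $a$ is $\le$-increasing (each term is an ancestor of the previous) and metric-converges to $a$; the confinement of $(y_n)$ to $L_a$ does not follow, and in that scenario one must show instead that $\phi(s)=a$. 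Second, the parent component is not contained in $\{z:z\ge a\}$: left side-branches hanging off $\llbracket\rho,a\rrbracket$ lie $\le a$ (indeed $\le L_a$), and if such branch points accumulate at $a$, an $\le$-increasing sequence running through successive side-branches can also metric-converge to $a$; here $\phi(s)$ is the $\le$-infimum of $L_a$, not of $R_a$. So a correct treatment needs at least three cases ($(y_n)$ eventually in $L_a$, eventually in $R_a$, or living in ancestral side-branches), plus an argument ruling out oscillation, and in each case one should pin down $\phi(s)$ via the mass identity $\mu\bigl(\bigcap_m\pi(z_m)\setminus\bigcup_n\pi(y_n)\bigr)=0$ combined with \textbf{Mes~1}, rather than a picture of the neighbourhood of $a$.

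The same remark applies to the ``local analysis'' for $a$ of degree $\le 2$: a small ball around $a$ need not be an arc, because branch points can accumulate at $a$ without raising its degree, so the argument cannot be purely local. The mass-plus-injectivity route works there too and is the uniform mechanism to use throughout. Finally, step (i) (that $\phi$ extends $\varphi^{-1}$) silently uses the fact that a $\le$-decreasing sequence $w_n\to y$ satisfies $\pi(y)=\bigcap_n\pi(w_n)$ --- an order-versus-metric compatibility statement that is precisely the content of the \citet{LUB15} monotone-convergence lemma you invoke elsewhere, so it deserves to be called out explicitly rather than folded into ``arguing as in the density lemma.''
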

\noindent
Fig \ref{fig:jccp} shows an example of a real tree and of the jumping contour process of its truncation below $T$, when $\mu$ is chosen equal to $\ell$. Fig \ref{fig:jcp} shows how to recover a chronological tree from its contour.

\begin{figure}[!ht]
\includegraphics[width=\textwidth]{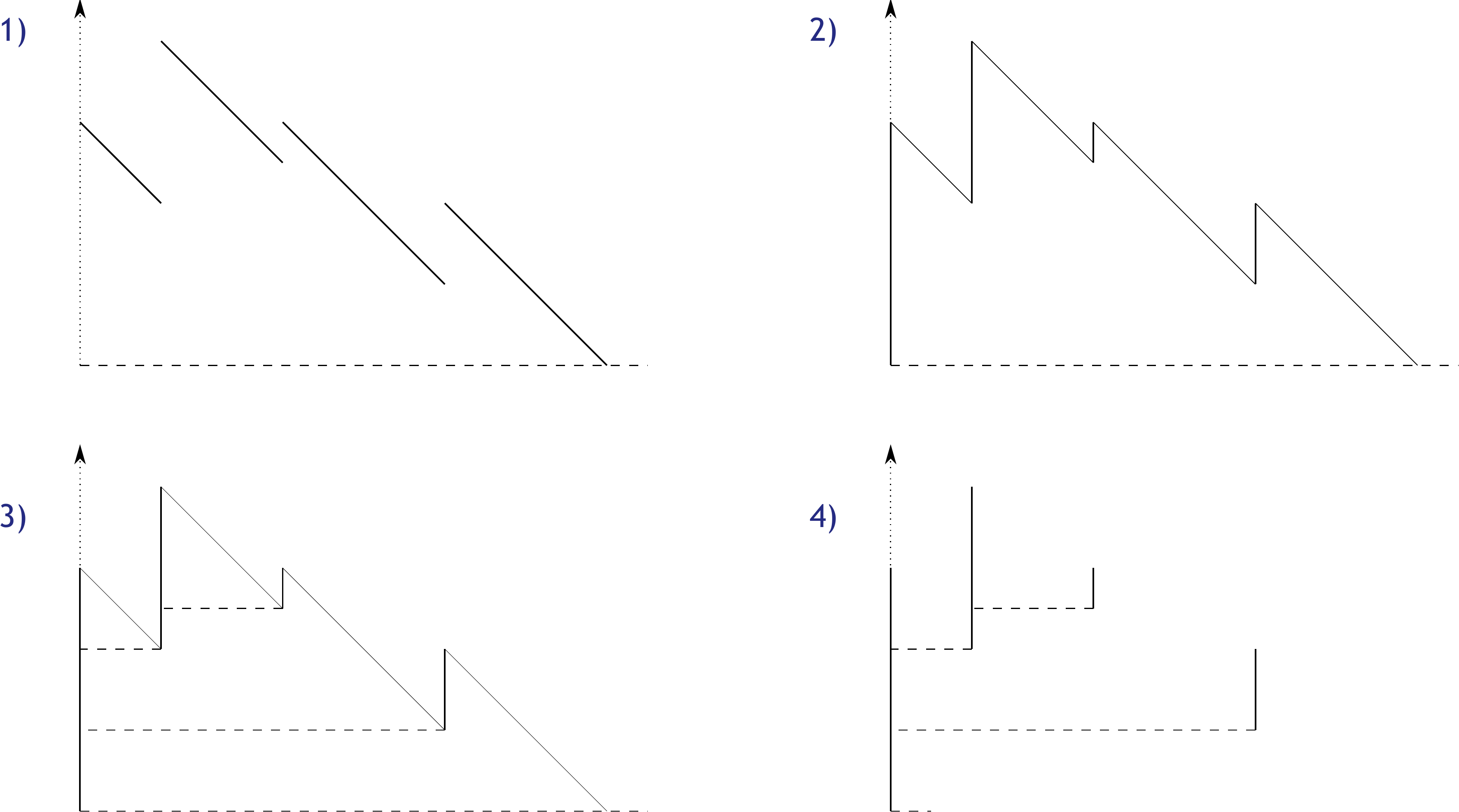}
\caption{The jumping contour process of a chronological tree with finite length, where $\mu$ is chosen equal to $\ell$: how to recover the tree from the contour. 1) Start with a càdlàg map with compact support; 2) Draw vertical solid lines in the place of jumps; 3) Report horizontal dashed lines from each edge bottom left to the rightmost solid point; 4) erase diagonal lines.}
\label{fig:jcp}
\end{figure}

\subsection{From the contour to the tree}

Let $h:[0,\infty)\to[0,\infty)$ be càdlàg with no negative jumps and compact support. Set $\sigma_h:=\sup\{t>0: h(t)\not=0\}$, as well as
$$
m_h(s,t):=\inf_{[s\wedge t, s\vee t]}h\qquad s,t\ge 0,
$$
and
$$
d_h(s,t):=h(s)+h(t) -2m_h(s,t).
$$
It is clear that $d_h$ is a pseudo-distance on $[0,\infty)$. Further let $\sim_h$ denote the equivalence relation on $[0,\infty)$
$$
s\sim_h t \Leftrightarrow d_h(s,t)=0 \Leftrightarrow h(s)=h(t)=m_h(s,t).
$$
\begin{thm}
Denote by $\tr_h$ the quotient space $[0,\sigma_h]|_{\sim_h}$. Then $(\tr_h,d_h)$ is a compact $\R$-tree. 
\end{thm}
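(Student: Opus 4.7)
The approach is the classical height-function coding of $\R$-trees, due to \citet{DLG02}, adapted to the c\`adl\`ag setting as in \cite{LUB15}. By the four-points characterization recalled earlier, it suffices to show that $(\tr_h, d_h)$ is compact (hence complete), path-connected, and satisfies the four-points inequality. As a preliminary, $d_h$ is a genuine pseudo-distance on $[0, \sigma_h]$: non-negativity follows from $m_h(s,t) \le h(s) \wedge h(t)$ (both $s$ and $t$ lie in the closed interval over which the infimum is taken), and the triangle inequality $d_h(s,u) \le d_h(s,t) + d_h(t,u)$ reduces to
\[
h(t) + m_h(s,u) \;\ge\; m_h(s,t) + m_h(t,u),
\]
which follows from the two elementary bounds $m_h(s,u) \ge m_h(s,t) \wedge m_h(t,u)$ (inclusion of the defining intervals) and $h(t) \ge m_h(s,t) \vee m_h(t,u)$ (since $t$ lies in both intervals). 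Hence $d_h$ descends to a metric on the quotient $\tr_h$.

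For the four-points inequality, I would fix representatives $s_1 \le s_2 \le s_3 \le s_4$ and set $a := m_h(s_1, s_2)$, $b := m_h(s_2, s_3)$, $c := m_h(s_3, s_4)$. The nested-interval identities $m_h(s_1, s_3) = a \wedge b$, $m_h(s_2, s_4) = b \wedge c$, $m_h(s_1, s_4) = a \wedge b \wedge c$ reduce the inequality to a short case analysis, on which of $a, b, c$ is smallest, showing that the two smallest among $a + c$, $(a \wedge b) + (b \wedge c)$, and $(a \wedge b \wedge c) + b$ are always equal.

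Path-connectedness I would establish by exhibiting an explicit continuous curve: given $s < t$ with $\mu := m_h(s, t)$, descend from $[s]$ along the ancestors of $[s]$ in $\tr_h$ down to height $\mu$ and then ascend along the ancestors of $[t]$ up to $[t]$. Concretely, for $\lambda \in [\mu, h(s)]$ set $\alpha_s(\lambda) := \sup\{u \le s : h(u) < \lambda\}$, producing an ancestor of $[s]$ at height approximately $\lambda$; the map $\lambda \mapsto [\alpha_s(\lambda)]$ is continuous precisely because $h$ has no negative jumps --- a discontinuity would require $h$ to drop across a level as time increases, which is forbidden. A symmetric construction along $t$ followed by concatenation yields the desired path.

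Finally, for compactness, note that $h$ is bounded and $\sigma_h < \infty$. Given $\varepsilon > 0$, c\`adl\`ag regularity furnishes a finite partition $0 = t_0 < t_1 < \cdots < t_n = \sigma_h$ whose pieces $[t_{i-1}, t_i)$ have $h$-oscillation at most $\varepsilon/2$; from $d_h(s, s') = (h(s) - m_h(s,s')) + (h(s') - m_h(s,s'))$ one deduces that each piece has $d_h$-diameter at most $\varepsilon$, so $\{[t_0], \ldots, [t_n]\}$ is a finite $\varepsilon$-net, proving total boundedness. For completeness, take a Cauchy sequence $([s_n])$, lift to $[0, \sigma_h]$, and extract a subsequence $s_{n_k} \to s^*$; if the approach is from the right, right-continuity of $h$ gives $d_h(s_{n_k}, s^*) \to 0$, and if from the left, the no-negative-jump hypothesis ensures $m_h(s_{n_k}, s^*) \to h(s^*-)$ and the Cauchy limit is represented by any $t^* \le s^*$ on the ancestral line of $s^*$ at height $h(s^*-)$, whose existence again uses the absence of negative jumps. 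The main obstacle is precisely this last step, where one must reconcile the jumps of $h$ with the existence of limits in $\tr_h$; this is also the only place where the no-negative-jumps hypothesis plays an essential role, being likewise used implicitly in the continuity statement of the preceding paragraph.
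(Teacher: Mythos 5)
The paper gives no proof of this statement --- it is left as an exercise with the hint to use the four-points condition --- so you are not being compared against a written argument but against a correct one. Your overall plan is right: check the pseudo-metric axioms, verify the four-points inequality by the nested-infimum identities, establish path-connectedness by explicit ancestral paths, and get compactness from total boundedness plus completeness. The triangle inequality, the four-points case analysis, and the total-boundedness argument (using the finite c\`adl\`ag partition and the oscillation bound $d_h(s,s') \le 2\,\mathrm{osc}_{[s\wedge s', s\vee s']}(h)$) are all correct.

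There is, however, a genuine error in the path-connectedness and completeness steps: you locate ancestors by looking backward in time, via $\alpha_s(\lambda) := \sup\{u \le s : h(u) < \lambda\}$, and claim this is continuous ``precisely because $h$ has no negative jumps.'' That claim is false in the presence of upward jumps. Take $h(t) = t$ on $[0,1)$ and $h(t) = 11 - t$ on $[1,11]$, so $h$ jumps from $1^-$ to $10$ at time $1$, and set $s = 1$. For $\lambda \in (1, 10]$ you get $\alpha_1(\lambda) = 1$ (constant), sitting at height $h(1) = 10$, while for $\lambda < 1$ you get $\alpha_1(\lambda) = \lambda$ at height $\lambda$; the image $\lambda \mapsto [\alpha_1(\lambda)]$ therefore jumps at $\lambda = 1$ (indeed $d_h(\alpha_1(\lambda),\alpha_1(1)) \to 9$ as $\lambda \uparrow 1$), and it does not even sweep out the segment of the ancestral line between heights $1$ and $10$. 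The resolution is to look \emph{forward}: define $\beta_s(\lambda) := \inf\{v \ge s : h(v) \le \lambda\}$. The absence of negative jumps, together with right-continuity, gives $h(\beta_s(\lambda)) = \lambda$ exactly and $m_h(s,\beta_s(\lambda)) = \lambda$, so $d_h(s,\beta_s(\lambda)) = h(s) - \lambda$ and $d_h(\beta_s(\lambda),\beta_s(\lambda')) = |\lambda - \lambda'|$ --- an isometric parametrization of the ancestral line. The two forward paths from $s$ and from $t$ meet at height $m_h(s,t)$ because $h \ge m_h(s,t)$ on $[\beta_s(m_h(s,t)),\,\beta_t(m_h(s,t))]$, giving $d_h = 0$ there. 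The same backward-vs-forward confusion appears in your completeness step, where you propose a representative $t^* \le s^*$ at height $h(s^*-)$; such a $t^*$ need not exist, whereas $\beta_{s^*}(h(s^*-))$ always does and is the actual limit of $[s_{n_k}]$ when $s_{n_k} \uparrow s^*$ across a jump of $h$.
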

\begin{exo}
Prove the last statement using the four points condition.
\end{exo}
\noindent
From now on, let $p_h:[0,\sigma_h]\to \tr_h$ map any element of $[0,\sigma_h]$ to its equivalence class relative to $\sim_h$. We can also endow $\tr_h$ with a total order and a mass measure, as follows. 
\begin{itemize}
\item Total order. We define $\le_h$ as the order of first visits, that is for any $x,y\in\tr_h$,
$$
x\le_h y \Leftrightarrow \inf p_h^{-1}(\{x\})\le \inf p_h^{-1}(\{y\}).
$$
\item Mass measure. The measure $\mu_h$ is defined as the push forward of Lebesgue measure by $p_h$.
\end{itemize}
\begin{thm}[\citealt{Duq06, LUB15}]
Let $(\tr, d)$ be a compact, binary $\R$-tree endowed with an orientation inducing a total order $\le$ (as in Definiftion \ref{dfn:ord}) and with a (finite) mass measure $\mu$ (satisfying \textbf{Mes 1} and \textbf{Mes 2}). Let $h$ denote the jumping contour process associated with $\le$ and $\mu$. Then $h$ is the unique càdlàg map such that the tree $(\tr_h, d_h, \le_h, \mu_h)$ is isomorphic to $(\tr, d, \le, \mu)$.
\end{thm}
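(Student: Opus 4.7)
The plan is to exhibit the natural isomorphism $\Phi:\tr_h\to\tr$ induced by the exploration process $\phi$, and then to derive uniqueness from the canonical nature of $\phi$.

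First I would set $\Phi\circ p_h:=\phi$, that is $\Phi([s]_{\sim_h}):=\phi(s)$ for $s\in[0,\mu(\tr)]$. The key identity, which simultaneously forces $\Phi$ to be well defined, injective and isometric, is
$$
d(\phi(s),\phi(t))\;=\;h(s)+h(t)-2\,m_h(s,t)\;=\;d_h(s,t),\qquad s,t\in[0,\mu(\tr)].
$$
Proving this is the main technical obstacle. I would first check that $\phi$ respects $\le$: this is immediate on $D=\varphi(\tr)$ because $\varphi^{-1}$ is strictly order-preserving there, and it extends to all of $[0,\mu(\tr)]$ by right-continuity of $\phi$ together with the structure of $\le$ under monotone limits in a compact $\R$-tree. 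Now fix $s<t$ and set $z:=\phi(s)\wedge\phi(t)$. By Definition \ref{dfn:ord}, exactly two cases occur: (i) $\phi(s)\preceq\phi(t)$, in which case $z=\phi(t)$; or (ii) $\phi(s)\in L_z$ and $\phi(t)\in R_z$ with $z$ a branching point. In either case $z$ is visited by $\phi$ at some $u^\ast\in[s,t]$, yielding $m_h(s,t)\le d(\rho,z)$. For the reverse inequality, for any $u\in[s,t]$ the sandwich $\phi(s)\le\phi(u)\le\phi(t)$, combined with a short case analysis using Definition \ref{dfn:ord}, forces $z\preceq\phi(u)$; hence $h(u)=d(\rho,\phi(u))\ge d(\rho,z)$, so $m_h(s,t)=d(\rho,z)$. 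Decomposing $d(\phi(s),\phi(t))=d(\phi(s),z)+d(z,\phi(t))$ along the unique geodesic through the mrca then yields the identity.

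Surjectivity of $\Phi$ is immediate since $\phi|_D=\varphi^{-1}$ maps $D$ onto $\tr$. The order $\le_h$ on $\tr_h$ is by construction the order of first visits by $\phi$, so the order-preservation of $\phi$ transports $\le_h$ onto $\le$ via $\Phi$. For the mass measure, the definition $\varphi(x)=\mu(\pi(x))$ together with \textbf{Mes 1}--\textbf{Mes 2} implies that $\varphi_{\ast}\mu$ coincides with Lebesgue measure on $D$; since $\mu$ is diffuse, a comparison of total masses shows that $\text{\rm Leb}([0,\mu(\tr)]\setminus D)=0$. Consequently $\phi_{\ast}\text{\rm Leb}=\mu$, which rewrites as $\Phi_{\ast}\mu_h=\mu$.

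For uniqueness, let $h'$ be any càdlàg map with an isomorphism $\Psi:\tr_{h'}\to\tr$ of ordered measured $\R$-trees, and set $\psi:=\Psi\circ p_{h'}:[0,\sigma_{h'}]\to\tr$. Then $\psi$ is càdlàg, order-respecting and pushes Lebesgue forward to $\mu$, so $\sigma_{h'}=\mu(\tr)$ and for Lebesgue-a.e.\ $s$ one has $\mu(\pi(\psi(s)))=s$, i.e.\ $\psi=\varphi^{-1}$ on the dense set $D$. Right continuity then forces $\psi=\phi$ on all of $[0,\mu(\tr)]$, and therefore $h'(\cdot)=d(\rho,\psi(\cdot))=d(\rho,\phi(\cdot))=h(\cdot)$.
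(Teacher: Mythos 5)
The paper states this theorem without proof, merely citing \citet{Duq06, LUB15}, so there is no in-text proof to compare against; I evaluate your proposal on its own terms.

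Your strategy---build the isomorphism from the exploration process $\phi$, verify the pseudometric identity $d(\phi(s),\phi(t))=d_h(s,t)$, transport order and measure through it, and read uniqueness off the same identities---is the right one. But the proof of the key inequality $m_h(s,t)\le d(\rho,z)$ contains a genuine error. You assert that ``in either case $z$ is visited by $\phi$ at some $u^\ast\in[s,t]$.'' This fails in the branching case. When $\phi(s)\in L_z$ and $\phi(t)\in R_z$, every $y\in L_z\cup R_z$ satisfies $z\preceq y$, hence $y<z$ for the total order, hence $\varphi(y)<\varphi(z)$ by \textbf{Mes 1}. The exploration therefore traverses all of $L_z$, then all of $R_z$, and only visits $z$ at time $\varphi(z)$, which is strictly larger than $t$ as soon as $\phi(t)\ne z$. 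What actually happens at the transition time $s_0$ between $L_z$ and $R_z$ is that $\phi(s_0^-)=z$ (left limit along $L_z$) while $\phi(s_0)\in R_z$: this is precisely an upward jump of $h$. The value $d(\rho,z)$ is realized only as the left limit $h(s_0^-)$, never as a value $h(u^\ast)$ with $u^\ast\in[s,t]$. (Take $\tr$ a caterpillar-free ``tripod'' with root edge, branching point $z$, and two pendant edges: $\phi$ visits $z$ only after finishing both pendant edges.) The bound $m_h(s,t)\le d(\rho,z)$ must therefore be obtained from the left limit of $h$ at $s_0\in(s,t]$, i.e.\ $\inf_{[s,t]}h\le\lim_{u\uparrow s_0}h(u)=d(\rho,z)$, not from a purported visit. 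Your reverse inequality $m_h(s,t)\ge d(\rho,z)$ via $z\preceq\phi(u)$ for all $u\in[s,t]$ is correct as written. A separate, minor misprint: in case (i) the ancestry relation consistent with $s<t$ is $\phi(t)\preceq\phi(s)$ (giving $\phi(s)\le\phi(t)$), not $\phi(s)\preceq\phi(t)$, which by Definition~\ref{dfn:ord} would instead force $t\le s$.
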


\subsection{A few words on topology}

Real trees are metric spaces. The `space' of real trees only makes sense if one can imbed all trees into the same metric space, say $(\xc,\delta)$, and if two compact real trees, seen as closed subsets of $\xc$, are identified when there is a root-preserving isometry mapping one tree onto the other. 

Rigorously, the space of real trees is then the set of isometry classes of trees which are closed subsets of $\xc$, and it can then be endowed with the usual Hausdorff metric $\delta_H$ associated with $\delta$ (i.e., $\delta_H(F_1,F_2)$ is the smallest $\varepsilon$ such that the $\varepsilon$-enlargement of $F_i$ is contained in $F_j$, $i\not=j$), called on this occasion the  \emph{Gromov--Hausdorff distance} and denoted $d_{GH}$, see e.g., \citet{BBIBook, Pau89}. 

In other words, this distance is defined for any two $\R$-trees $\tr_1$ and $\tr_2$ as
$$
d_{GH}(\tr_1,\tr_2)=\inf_{f_i:\tr_i\to\xc} \delta_H(f_1(\tr_1), f_2(\tr_2))\vee \delta(f_1(\rho_1), f_2(\rho_2)),
$$
where the infimum is taken over all isometries imbedding $\tr_1$ and $\tr_2$ into $\xc$.
\begin{thm}[\citealt{EPW05}]
The Gromov--Hausdorff distance makes the space of compact real trees a complete, separable space.
\end{thm}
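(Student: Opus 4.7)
The plan is to decompose the statement into two assertions: (a) the space $\mathcal{M}_\star$ of isometry classes of rooted compact metric spaces is complete and separable under $d_{GH}$; and (b) the set $\mathbb{T}$ of (isometry classes of) compact rooted real trees is closed in $\mathcal{M}_\star$. Completeness and separability of a closed subspace of a complete separable space then yield the theorem. I would treat (a) as the classical Gromov--Hausdorff result and concentrate effort on (b).

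For separability I would directly exhibit a countable dense family inside $\mathbb{T}$ (this is cleaner than inheriting it from $\mathcal{M}_\star$). Given $(\mathcal{T},d,\rho)\in\mathbb{T}$ and $\varepsilon>0$, pick a finite $\varepsilon$-net $\{x_0=\rho,x_1,\dots,x_n\}$ of $\mathcal{T}$, and define the subtree spanned by the net, $\mathcal{T}_\varepsilon:=\bigcup_{0\le i,j\le n}\llbracket x_i,x_j\rrbracket$, which is a finite real tree (finitely many edges of positive lengths) with the induced metric. Using the isometric inclusion $\mathcal{T}_\varepsilon\hookrightarrow\mathcal{T}$, one checks $d_{GH}(\mathcal{T},\mathcal{T}_\varepsilon)\le\varepsilon$. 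A further approximation of edge lengths by rationals shows that the countable family of finite rooted combinatorial trees with rational edge lengths is dense in $\mathbb{T}$.

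For (a), I would quote (or briefly recall) the standard fact that a Cauchy sequence in $\mathcal{M}_\star$ admits a limit: by extracting a subsequence with $d_{GH}(X_n,X_{n+1})\le 2^{-n}$ one couples the $X_n$ via $\varepsilon$-isometries into a single compact metric space, and the Hausdorff limit of their images is the required $X$. This is the content of the Gromov completeness theorem, and needs no tree structure.

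The heart of the argument, and where I expect the main difficulty, is closedness of $\mathbb{T}$ in $\mathcal{M}_\star$. Suppose $\mathcal{T}_n\in\mathbb{T}$ with $d_{GH}(\mathcal{T}_n,X)\to 0$. I would apply the four points condition theorem stated earlier in the chapter, so it suffices to verify that $X$ is complete (automatic from compactness), satisfies the four points inequality, and is path-connected. For the four points inequality, given $y_1,\dots,y_4\in X$ and $\eta>0$, take $n$ large enough that there exist $x_i^n\in\mathcal{T}_n$ with $|d_n(x_i^n,x_j^n)-\delta(y_i,y_j)|<\eta$ for all $i,j$ (a consequence of $d_{GH}$-approximation); the four points inequality in $\mathcal{T}_n$ then passes to the limit as $\eta\downarrow 0$, $n\to\infty$. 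Path-connectedness of $X$ is the delicate step: each $\mathcal{T}_n$, by uniqueness of geodesics, is a compact length space, and one invokes the classical fact that a Gromov--Hausdorff limit of compact length spaces is a length space; a complete length space is geodesic, hence path-connected. With these three properties verified, the four points theorem identifies $X$ as an element of $\mathbb{T}$, establishing closedness and concluding the proof.
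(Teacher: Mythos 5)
Your proposal is correct, but it does not mirror the paper's text, because the paper does not actually prove this theorem: it cites \citet{EPW05} and moves on. What the paper does offer, in the remark following the subsequent proposition, is a sketch of the separability half by a quite different route: the map $h\mapsto\tr_h$ from c\`adl\`ag contour functions to coded trees is $2$-Lipschitz from the Skorokhod metric to $d_{GH}$, and since Skorokhod space is separable, so is its image. Your argument --- exhibiting the countable dense family of finite rooted combinatorial trees with rational edge lengths obtained by passing to the subtree spanned by a finite $\varepsilon$-net and rounding lengths --- is more elementary and also more general, since it sidesteps the question of whether every compact $\R$-tree arises as $\tr_h$ for some c\`adl\`ag $h$ (the coding theorems invoked in the chapter are stated for \emph{binary}, oriented, measured trees). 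For completeness the paper says nothing at all; your decomposition into (a) Gromov's completeness theorem for the space of compact pointed metric spaces and (b) closedness of the real-tree subspace is the standard one and is essentially the route of \citet{EPW05}. The key step (b) is handled correctly: the four-points inequality passes to a Gromov--Hausdorff limit via approximating quadruples, and connectedness follows from stability of the length-space property under $d_{GH}$-limits together with Hopf--Rinow (note that compactness of $X$ already makes it a geodesic length space; you do not need the separate appeal to completeness of general length spaces). In short, a correct proof by a different and more self-contained argument than the one hinted at in the surrounding text.
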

\noindent
Actually, one can avoid resorting to the abstract space $\xc$ and directly deal with correspondences between $\tr_1$ and $\tr_2$. 
\begin{dfn}
A correspondence between $(\tr_1,d_1)$ and $(\tr_2,d_2)$ is a subset $\Rc$ of $\tr_1\times \tr_2$ such that
$$
\forall x_1\in \tr_1\ \ \exists x_2\in \tr_2\ \ (x_1, x_2)\in\Rc,
$$ 
$$
\forall y_2\in \tr_2\ \ \exists y_1\in \tr_1\ \ (y_1, y_2)\in\Rc.
$$ 
The distortion $\text{\rm dis}(\Rc)$ of the correspondence $\Rc$ is defined as
$$
\text{\rm dis} (\Rc) = \sup\{|d_1(x_1,y_1) -d_2(x_2, y_2)|: (x_1,x_2)\in\Rc, (y_1,y_2)\in \Rc\}.
$$
\end{dfn}
\noindent
Then we have the following useful equality (\citealt{BBIBook})
$$
d_{GH}(\tr_1,\tr_2) = \frac{1}{2} \inf_{\Rc}\text{\rm dis}(\Rc),
$$
where the infimum is taken over all distortions $\Rc$ between $\tr_1$ and $\tr_2$. This equality has the following consequence (which is a slight improvement of Lemma 2.4 in \citealt{LeG05}).
\begin{prop}
Let $h_1,h_2:\R_+\rightarrow\R_+$ be two càdlàg functions with compact support, and let $\tr_1:=\tr_{h_1}$ and $\tr_2:=\tr_{h_2}$ denote the real trees associated with $h_1$ and $h_2$ respectively.
Then
$$
d_{GH}(\tr_1, \tr_2)\le 2 d_{S}(h_1, h_2),
$$
where $d_S$ denotes the Skorokhod distance. 
\end{prop}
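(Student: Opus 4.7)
The plan is to build an explicit correspondence between $\tr_1$ and $\tr_2$ from a near-optimal time-change for the Skorokhod distance and then invoke the identity $d_{GH}(\tr_1,\tr_2)=\tfrac12\inf_\Rc\text{dis}(\Rc)$ stated just above the proposition.

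Fix $\varepsilon>0$ and pick an increasing homeomorphism $\lambda:\R_+\to\R_+$, equal to the identity outside a compact set, such that
$$\max\bigl(\|\lambda-\mathrm{id}\|_\infty,\|h_1-h_2\circ\lambda\|_\infty\bigr)\le d_S(h_1,h_2)+\varepsilon.$$
For $i=1,2$, extend $p_{h_i}:[0,\sigma_{h_i}]\to\tr_i$ to all of $\R_+$ by setting $p_{h_i}(s)=\rho_i$ for $s\ge\sigma_{h_i}$ (this is consistent with the equivalence relation $\sim_{h_i}$ since $h_i$ vanishes outside $[0,\sigma_{h_i}]$). Then define
$$\Rc:=\bigl\{\bigl(p_{h_1}(s),\,p_{h_2}(\lambda(s))\bigr):s\in\R_+\bigr\}\subset\tr_1\times\tr_2.$$
Because $\lambda$ is a bijection of $\R_+$ and each $p_{h_i}$ is surjective, $\Rc$ is a genuine correspondence; moreover $(\rho_1,\rho_2)\in\Rc$ since $\lambda(0)=0$.

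Next I would estimate the distortion. Given $(x_1,x_2)=(p_{h_1}(s),p_{h_2}(\lambda(s)))$ and $(y_1,y_2)=(p_{h_1}(t),p_{h_2}(\lambda(t)))$ in $\Rc$, the defining formula $d_{h_i}(u,v)=h_i(u)+h_i(v)-2m_{h_i}(u,v)$ gives
\begin{eqnarray*}
d_{h_1}(x_1,y_1)&=& h_1(s)+h_1(t)-2\inf_{u\in[s\wedge t,s\vee t]}h_1(u),\\
d_{h_2}(x_2,y_2)&=& (h_2{\circ}\lambda)(s)+(h_2{\circ}\lambda)(t)-2\inf_{u\in[s\wedge t,s\vee t]}(h_2{\circ}\lambda)(u),
\end{eqnarray*}
where the second equality uses that $\lambda$ is increasing so that $\inf_{[\lambda(s)\wedge\lambda(t),\lambda(s)\vee\lambda(t)]}h_2=\inf_{[s\wedge t,s\vee t]}h_2\circ\lambda$. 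Taking the difference and using $|\inf f-\inf g|\le\|f-g\|_\infty$ yields
$$\bigl|d_{h_1}(x_1,y_1)-d_{h_2}(x_2,y_2)\bigr|\le 4\|h_1-h_2\circ\lambda\|_\infty\le 4\bigl(d_S(h_1,h_2)+\varepsilon\bigr).$$

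Hence $\text{dis}(\Rc)\le 4(d_S(h_1,h_2)+\varepsilon)$, and the identity $d_{GH}=\tfrac12\inf_\Rc\text{dis}(\Rc)$ gives $d_{GH}(\tr_1,\tr_2)\le 2(d_S(h_1,h_2)+\varepsilon)$; letting $\varepsilon\downarrow0$ concludes the proof. The one point requiring care — and really the only non-routine step — is checking that the extended $p_{h_i}$ still has the right compatibility with $d_{h_i}$ when points beyond $\sigma_{h_i}$ appear, i.e.\ that collapsing the tail $[\sigma_{h_i},\infty)$ to the root is consistent with $d_{h_i}$; this is immediate because $h_i\equiv 0$ past its support forces $m_{h_i}(s,\cdot)=0$ there, so no spurious distances are created. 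Everything else is a direct bookkeeping of the Skorokhod time-change inside the contour formula, and no separate control of $\|\lambda-\mathrm{id}\|_\infty$ is needed since the inf in the contour formula is already along a monotone reparametrization.
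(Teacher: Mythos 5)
Your proof is correct and follows essentially the same route as the paper's: choose a near-optimal time-change $\lambda$, form the correspondence $\{(p_{h_1}(s),p_{h_2}(\lambda(s)))\}$ (the paper writes the symmetric version $\{(p_{h_1}(\lambda(t)),p_{h_2}(t))\}$), and bound the distortion by $4\|h_1-h_2\circ\lambda\|_\infty$ via the contour formula and the fact that $\lambda$ is monotone, then invoke $d_{GH}=\tfrac12\inf_\Rc\text{\rm dis}(\Rc)$. The only (welcome) extra care you take is extending $p_{h_i}$ beyond $\sigma_{h_i}$ to make surjectivity of the correspondence explicit, a detail the paper glosses over, and you correctly observe, as the paper also remarks, that no control on $\|\lambda-\mathrm{id}\|_\infty$ is needed.
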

\begin{rem}
A very important consequence of this proposition is that whenever a sequence $(X_n)$ of càdlàg, non-negative stochastic processes with compact support converges weakly in Skorokhod space to $X$, the trees coded by $X_n$ converge weakly in the Gromov--Hausdorff sense to the tree coded by $X$.

Also note that the separability of the Gromov--Hausdorff tree space stems from the separability of the Skorokhod space, thanks again to the last statement.
\end{rem}
\begin{proof}
Let $\varepsilon>0$ and let $\lambda$ be a perturbation such that $\|h_1\circ\lambda-h_2\|\le d_S(h_1,h_2)+\varepsilon$, 
where $\|\cdot\|$ denotes the supremum norm. Then let $\Rc$ be the correspondence defined by
$$
\Rc=\{(x_1,x_2)\in \tr_1\times \tr_2: \exists t\ge 0, p_{h_1}(\lambda(t))=x_1, p_{h_2}(t) =x_2\}.
$$
Then for any $(x_1, x_2)\in \Rc$ and $(y_1,y_2)\in\Rc$,  there are $s,t\ge 0$ such that
$$
x_1=p_{h_1}(\lambda(s)), x_2 = p_{h_2}(s)\mbox{ and }y_1=p_{h_1}(\lambda(t)), y_2 = p_{h_2}(t).
$$
Now
$$
d_1(x_1,y_1)=h_1(\lambda(s))+h_1(\lambda(t)) -2m_{h_1}(\lambda(s),\lambda(t))
$$ 
and 
$$
d_2(x_2,y_2)=h_2(s)+h_2(t) -2m_{h_2}(s,t),
$$
so that
\debeq
|d_1(x_1,y_1)-d_2(x_2,y_2)|& \le& |h_1\circ\lambda-h_2|(s) + |h_1\circ\lambda-h_2|(t) +2\,|\inf_{[s,t]}h_1\circ\lambda-\inf_{[s,t]}h_2|\\
 &\le& 4\|h_1\circ\lambda-h_2\|.
\fineq
Then by definition of the distortion, $\text{\rm dis}(\Rc)\le 4\|h_1\circ\lambda-h_2\|$, so the inequality stated before the proposition yields 
$$
d_{GH}(\tr_1,\tr_2) = \frac{1}{2} \inf_{\Rc}\text{\rm dis}(\Rc)\le 2\|h_1\circ\lambda-h_2\|\le 2d_S(h_1,h_2)+2\varepsilon,
$$
which yields the result. Note that we have not needed to control the difference between $\lambda$ and the identity. 
\end{proof}

\section{Random $\R$-trees}

\subsection{Splitting trees}

Splitting trees (\citealt{Gei96, GK97, Lam10}) are random chronological trees satisfying the branching property. More specifically, let $\Lambda$ be a positive measure on $(0,\infty]$, called the \emph{lifespan measure}, such that $\int_{(0,\infty]} (r\wedge 1)\,\Lambda(dr)<\infty$. A splitting tree is a random chronological tree, where individuals live and reproduce independently and conditional on the life span $(\alpha(u),\omega(u)]$ of a given individual $u$, pairs of birth times and lifetimes of the newborns of $u$ form a Poisson point process on $(\alpha(u),\omega(u)]\times (0,\infty]$ with intensity $\text{Leb}\otimes\Lambda$. If $\Lambda$ is finite with mass $b=\Lambda((0,\infty])$, then individuals give birth at rate $b$ to individuals with lifetime distribution $b^{-1}\,\Lambda$.

Observe that the width process of a splitting tree is not necessarily Markovian. When $\Lambda$ is finite, it is a binary, homogeneous Crump--Mode--Jagers (CMJ) process, and it is not Markovian unless the lifetime distribution is exponential (or a Dirac mass at $\{\infty\}$ in the pure-birth case). For modeling purposes, note that splitting trees with absolutely continuous lifetimes can equivalently be defined via a `death rate' that can be age-dependent.
\begin{rem}
There are two branching processes hidden in a splitting tree other than its width process. The first one is the process tracking the number of individuals alive at each given (discrete) generation, and the second one is the process tracking the total sum of lifetimes of individuals of each given generation. Both are Markovian branching processes in discrete time, the first one with integer values (a Galton--Watson process), and the second one with non-negative real values (a Jirina process). Note that the Jirina process can take finite values even when $\Lambda$ is not finite, which is not the case of the Galton--Watson process.
\end{rem}
\noindent
Recall that chronological trees are naturally endowed with the orientation associated with the rule that `daughters sprout to the right of their mother', so they are given a natural order $\le$ associated with this orientation. 

In addition, thanks to our assumption on $\Lambda$, the length measure is a.s. locally finite. So it is possible to use the length measure to define the exploration process and the jumping contour process for the tree truncated under some fixed, finite height. The law of the jumping contour process is particularly appealing in this setting (see also Fig \ref{fig:jcp}).
\begin{thm}[\citealt{Lam10}]
\label{thm:jccp}
Let $X_t=Y_t-t$, where $Y$ is the subordinator with Lévy measure $\Lambda$. Conditional on the lifetime $x$ of the root individual, the jumping contour process of the splitting tree with lifespan measure $\Lambda$ truncated below height $a$ is distributed like the process $X$ started at $x$, reflected below $a$ and killed upon hitting 0.
\end{thm}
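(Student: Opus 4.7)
The plan is to match, pathwise, the jumping contour process $H$ of the truncated splitting tree with the reflected-killed L\'evy process $\tilde{X}$ obtained from $X_t=Y_t-t$ by reflection below $a$ and killing at $0$. The strategy rests on two ingredients: the branching property of splitting trees, and the Poisson point process (PPP) description of births along each edge. Starting at the top of the root's edge (height $x$), the exploration moves downward along edges at unit speed with respect to the length measure $\ell$. Between jumps of $H$ the contour value---which equals the height of the currently visited point---therefore decreases linearly at rate $1$, matching the slope $-1$ of $X$ between jumps of $Y$.

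Next I would identify the jumps of $H$ with the atoms of a PPP on $(0,\infty)\times(0,\infty]$ of intensity $\mathrm{Leb}\otimes\Lambda$. Along the currently explored edge $u$, the daughter births form a PPP of intensity $\mathrm{Leb}\otimes\Lambda$ on $(\alpha(u),\omega(u)]\times(0,\infty]$; the exploration meets them in reverse time order, and at each encounter $(\alpha(d),\zeta)$ the contour jumps from height $\alpha(d)$ upward by $\zeta$, landing at $\omega(d)\wedge a=(\alpha(d)+\zeta)\wedge a$ --- i.e.\ at $d$'s top when it lies below $a$ and at $a$ otherwise, since the part of $d$'s subtree above $a$ is discarded by truncation. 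This is precisely Skorokhod reflection below $a$ applied to a jump of $X$ of size $\zeta$ at height $\alpha(d)<a$. Combined with the unit linear descent between jumps and the PPP of jumps with intensity $\Lambda$, this reproduces the law of $\tilde{X}$ on the first excursion between jumps.

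After such a jump, the contour recursively explores the subtree descending from $d$. By the branching property this subtree, conditional on $d$'s lifetime, is an independent splitting tree with lifespan measure $\Lambda$ and root-lifetime $(\omega(d)\wedge a)-\alpha(d)$; by induction on the (almost surely finite, since the length measure is locally finite on the truncation) number of individuals of the truncated tree, its contour has the same law as $\tilde{X}$ started at $\omega(d)\wedge a$, reflected below $a$, and killed when it first hits $\alpha(d)$. Translation invariance of the subordinator $Y$ then identifies this piece with the strong Markov increment of $\tilde{X}$ between its first jump and its return to $\alpha(d)$, after which the exploration resumes descending the root's edge and the argument iterates until $H$ reaches $0$, which is exactly the killing time of $\tilde{X}$. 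The main obstacle is to make this recursive concatenation rigorous when $\Lambda$ is infinite; for that I would approximate $\Lambda$ by $\Lambda_n:=\Lambda|_{[1/n,\infty]}$ (finite), run the induction for each $n$, and pass to the limit using the measurability of both constructions with respect to the underlying PPP of birth-time/lifetime pairs together with the continuity of the Skorokhod reflection map.
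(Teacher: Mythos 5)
The paper states this theorem only by citation to Lambert (2010) and does not reproduce a proof. Your argument is correct and follows essentially the same route as the cited reference: a path-by-path identification of the jumping contour with the reflected L\'evy process, built from the unit-speed descent between jumps, the Poisson point process of birth-time/lifetime pairs along each edge, the recognition of truncation at $a$ as Skorokhod reflection, and the recursion via the branching and strong Markov properties, with the general (infinite-mass $\Lambda$) case handled by approximation and continuity of the reflection map.
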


\subsection{The continuum random tree}

Recall how in Section \ref{subsec:segments} we have constructed a (random) real tree $\tr$ by connecting together segments colinear to the elements  $(\gamma_n)$ of a linearly independent family of a complete vector space $\mathcal X$. To define the \emph{Continuum Random Tree} (CRT) discovered by \cite{Ald91, Ald93}, we need the lengths of the segments to be random ($\mathcal X=\ell_1$ in the original paper).
\begin{dfn}
The CRT is the tree $\tr$ obtained by connecting segments whose lengths $(\|\gamma_n\|)$ are distributed as the successive distances between consecutive atoms of a Poisson point process on the half line with inhomogeneous intensity $t \, dt$. 
\end{dfn}
\begin{thm}[\citealt{Ald91, Ald93}]
 Let $\mathbf{e}$ stand for the normalized Brownian excursion, i.e., the positive Brownian excursion conditioned to have lifetime 1. The $\R$-tree $\tr_{\mathbf{e}}$ coded by $\mathbf{e}$ is isometric in law to the CRT.
\end{thm}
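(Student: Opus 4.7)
The plan is to identify both random metric spaces by matching the joint law, for every $k\ge 1$, of the subtree spanned by $k$ points sampled independently according to the natural mass measure on each side (Lebesgue pushed forward by $p_{\mathbf{e}}$ in the case of $\tr_{\mathbf{e}}$, and the mass measure inherited from the line-breaking construction on the CRT). Because both objects are a.s.\ compact real trees whose leaf set carries full mass, this family of ``reduced trees'' characterizes the law on the space of isometry classes of compact rooted real trees, endowed with the Gromov--Hausdorff topology discussed in the previous section.

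First I would treat the excursion side: sample $U_1,\ldots,U_k$ i.i.d.\ uniform on $[0,1]$ and let $R_k(\mathbf{e})$ be the subtree of $\tr_{\mathbf{e}}$ spanned by $\rho$ and $p_{\mathbf{e}}(U_1),\ldots,p_{\mathbf{e}}(U_k)$. Its combinatorial shape is determined by the ranking of the pairwise minima $m_{\mathbf{e}}(U_{(i)},U_{(j)})$, and its $2k-1$ edge lengths are explicit affine functions of these minima and of the values $\mathbf{e}(U_{(i)})$. Using Bismut's description of $\mathbf{e}$ at a uniformly marked time (so $\mathbf{e}(U)$ is Rayleigh and $\mathbf{e}$ splits into two independent Brownian meanders glued at that height) and iterating this decomposition $k$ times, one computes that the joint law of (shape, edge lengths) is uniform over the $(2k-1)!!$ binary shapes, with common edge-length density
\[
s\,\exp(-s^2/2)\,\1_{\R_+^{2k-1}}(\ell_1,\ldots,\ell_{2k-1}),\qquad s=\sum_j\ell_j.
\]

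Next, on the line-breaking side, let $0<T_1<T_2<\cdots$ be the atoms of a Poisson point process on $\R_+$ with intensity $t\,dt$, and let $L_n=T_n-T_{n-1}$ with $T_0=0$. The CRT at stage $k$ is obtained from $[0,T_k]$ by cutting at the $T_j$, $1\le j<k$, and gluing the resulting segments inductively at uniform points. Using that $T_k$ has density $t^{2k-1}e^{-t^2/2}/\big(2^{k-1}(k-1)!\big)$ and that the gluing is uniform on the current total length, a direct Jacobian computation yields precisely the same joint density $s\,e^{-s^2/2}$ uniformly over the $(2k-1)!!$ shapes.

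The main obstacle will be the explicit density on the Brownian side, where the Bismut/Itô-excursion bookkeeping is delicate and requires care in identifying which edge corresponds to which subinterval after each decomposition. A cleaner alternative, following Aldous' original strategy, is to bypass this calculation entirely via a discrete approximation: by Donsker's theorem applied to the Łukasiewicz or contour random walk of the uniform plane tree with $n$ edges, its rescaled contour process converges weakly to $\mathbf{e}$ in Skorokhod space, and by the continuity proposition of the previous subsection this transfers to Gromov--Hausdorff convergence of the rescaled trees to $\tr_{\mathbf{e}}$; combined with Aldous' direct proof that the same discrete sequence converges to the line-breaking CRT, uniqueness of weak limits identifies the two random trees in law.
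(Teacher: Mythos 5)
The paper does not actually prove this theorem: it is cited to Aldous and immediately used, together with the Donsker-type convergence of Galton--Watson contour processes to $\mathbf{e}$ and the Skorokhod-to-Gromov--Hausdorff continuity proposition just above, to identify the CRT as a scaling limit. So there is no internal proof to compare against, and I review your sketch on its own terms.

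Your main route --- matching, for every $k$, the law of the reduced tree $R_k$ spanned by the root and $k$ mass-sampled leaves --- is the standard identification argument and is sound, with two caveats. One is a counting slip: $R_k$ is a rooted binary tree with $k$ labelled tips and a degree-$1$ root, hence $2k-1$ edges and $t_k=(2k-3)!!$ possible shapes (the cladogram count from Chapter~1), not $(2k-1)!!$; this is also forced by normalisation, since $\int_{\R_+^{2k-1}} s\,e^{-s^2/2}\,d\ell = 1/(2k-3)!!$, so only $(2k-3)!!$ equiprobable shapes give total mass one. The other is that equality of the laws of $R_k$ for all $k$ identifies the laws of the trees themselves only because both random trees are a.s.\ the closure of $\bigcup_k R_k$ (Aldous' ``leaf-tightness''): the mass measure is diffuse and supported on the leaves. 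This holds on both sides, but it is the step that upgrades finite-dimensional agreement to agreement in the Gromov--Hausdorff sense and should be stated explicitly. Your alternative route --- Donsker for the contour of uniform plane trees, the GH-continuity proposition of the preceding subsection, Aldous' identification of that same rescaled discrete sequence's limit with the line-breaking CRT, then uniqueness of weak limits --- is essentially Aldous' original argument and is exactly what the paper gestures at in the paragraph after the statement; it trades the delicate Bismut/It\^o bookkeeping for the need to reconcile the Gromov-weak (reduced-tree) mode of convergence in Aldous' work with the GH convergence you obtain from the contour, which again comes down to leaf-tightness. Either route is legitimate: the first yields the quotable explicit $k$-point marginal, the second avoids the excursion-theoretic computation you rightly flag as the hard part.
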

\noindent
It is standard that the contour process (in some appropriate meaning) of a Galton--Watson tree with finite offspring variance conditioned to have $n$ vertices, rescaled by a factor $\sqrt{n}$, converges weakly in the Skorokhod space to the normalized excursion. The results of the last section then imply that the CRT is the scaling limit in the Gromov--Hausdorff topology of conditioned Galton-Watson trees with finite offspring variance (see e.g., \citealt{Ald93, LeG93}). 

Also, since binary trees with $n$ tips have $2n-1$ vertices, binary Galton--Watson trees conditioned to have $n$ tips, which follow $\Pnpda$, converge to the tree coded by the Brownian excursion with length normalized to 2.\\
\\
This conditioning (on number of vertices or tips) can sometimes be awkward in some practical situations since it is evanescent in the limit. It can be more convenient to consider a forest of $n$ independent critical Galton--Watson trees, whose contour process (in the same appropriate meaning) is the concatenation of $n$ independent contour processes (\citealt{LeG05}). In the limit, we should get a contour process which is the concatenation of a fixed amount of excursions. This fixed amount is measured by the local time at 0 of this process.

Actually, the contour processes of Galton--Watson processes are not in general Markovian (\citealt{DLG02, LeG05}), so it will be more convenient to display the same kind of result starting with splitting trees, whose contour process is Markovian, thanks to Theorem \ref{thm:jccp}. For the sake of generality, we will also allow the trees to be subcritical ($\alpha>0$ in the following statement). 

Let $\zeta>0$.
Consider a forest of $[A\zeta]$ i.i.d. splitting trees, where $A>0$ is a scaling parameter, characterized by a finite birth rate $b_A$ and a lifetime distribution given by a random variable $V_A$. We make three assumptions.
\begin{itemize}
\item[(H1)] $\displaystyle b_A\,\E(V_A)= 1-\frac\alpha A +o\left(\frac 1A\right)$
\item[(H2)] $\displaystyle \lim_{A\to\infty} \frac{A\,\EE\big(V_A^2\big)}{2\,\EE(V_A)}=\beta>0$
\item[(H3)] $\displaystyle \lim_{A\to\infty} A\,b_A\,\EE\left(V_A^3\wedge 1\right)=0$
\end{itemize}
\begin{rem}
Note that if $V_A$ is exponentially distributed with parameter $d_A$, then the assumptions $(H1-H3)$ hold as soon as $d_A=A/\beta=b_A+\alpha/\beta$.
\end{rem}
\begin{thm}[\citealt{LSZ13}]
\label{thm:LSZ}
Let $X^A$ denote the jumping contour process of this forest and $Z^A_t$ denote the total number of individuals alive at time $t$, rescaled by $A$. Then the pair $(X^A, Z^A)$ converges weakly in Skorokhod space. 

First, the limit of the sequence $(X^A)$ is the process $X-\underline{X}$ killed when $\underline{X}$ hits $-\zeta$, where $\underline{X}_t=\inf_{s\in[0,t]}X_s$, and 
$$
X_t =-\alpha t +\sqrt{2\beta} B_t,
$$
where $B$ is the standard Brownian motion started at 0. Second, the limit of $(Z^A)$ is a diffusion process $Z$ started at $Z_0=\zeta$ and solution to an SDE of the form
$$
dZ_t=-\frac\alpha\beta\, Z_t\, dt +\sqrt{\frac{2\,Z_t}{\beta}}\, dW_t.
$$
\end{thm}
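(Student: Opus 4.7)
My plan hinges on Theorem~\ref{thm:jccp}, which identifies the contour of a single splitting tree with an explicit spectrally positive L\'evy process. For a forest of $\lfloor A\zeta\rfloor$ i.i.d.~trees, the concatenated contour $X^A$ is distributed, up to appropriate rescaling, as $S^A - \underline{S}^A$ killed when $\underline{S}^A$ first reaches $-\lfloor A\zeta\rfloor/A$, where $S^A_t = Y^A_t - t$ and $Y^A$ is a compound Poisson subordinator of rate $b_A$ with jumps distributed as $V_A$. This rests on the classical excursion theory for spectrally positive L\'evy processes: the excursions of $S^A - \underline{S}^A$ above zero are the individual tree contours, and their heights are the root lifetimes.

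The first substantive step is a functional CLT for $S^A$. Under the rescaling dictated by the near-critical Malthusian parameter $1 - b_A\EE(V_A)$ being of order $1/A$ and $\EE(V_A^2)$ of order $1/A$, hypothesis (H1) gives the limiting drift $-\alpha$, (H2) gives the limiting infinitesimal variance $2\beta$, and (H3) is a Lindeberg-type moment condition that kills the jump component in the limit. Thus $S^A \Rightarrow X$ in Skorokhod topology with $X_t = -\alpha t + \sqrt{2\beta}\,B_t$. Since the reflection map $S \mapsto S - \underline{S}$ and killing at $\inf\{t : \underline{S}_t \le -\zeta\}$ are a.s.~continuous at paths of $X$ (whose infimum is strictly decreasing and continuous), the continuous mapping theorem yields the claimed convergence of $X^A$.

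For $Z^A$, I would exploit the fact that the width at real time $t$ is a functional of the contour: an individual is alive at time $t$ iff its lifespan segment is traversed by the contour, so $A\,Z^A_t$ equals the number of excursions of $X^A$ straddling height $t$, i.e.\ a level-crossing count. In the diffusive limit this count becomes (a constant multiple of) the local time of $X - \underline{X}$ at level $t$, accumulated until killing, and the Ray--Knight theorem for Brownian motion with drift identifies the resulting process in the space variable $t$ as the Feller-type diffusion
\debeq
dZ_t = -\frac{\alpha}{\beta}\,Z_t\,dt + \sqrt{\frac{2\,Z_t}{\beta}}\,dW_t, \qquad Z_0 = \zeta,
\fineq
the factor $1/\beta$ arising from the contour-to-real-time clock change encoded in (H2).

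The main obstacle will be the joint convergence. The discrete level-counting functional that recovers $Z^A$ from $X^A$ is not continuous on Skorokhod space, so the convergence of $X^A$ does not transfer automatically to $Z^A$. One must either regularize this functional (say by averaging over an infinitesimal layer $[t-\delta, t]$) and control the error uniformly in $A$, or establish tightness of $Z^A$ directly via martingale/Doob estimates for the CMJ width process under the near-critical Malthusian regime supplied by (H1), then identify the limit as the unique solution of the above SDE through a martingale-problem characterization of the Feller diffusion.
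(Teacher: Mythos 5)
Note first that the lecture notes do not prove Theorem~\ref{thm:LSZ}; the statement is quoted from \citet{LSZ13} and followed only by remarks, so there is no in-text proof to compare against. Judged against those remarks and the theorem itself, your outline is correct in spirit for the contour, but your leading proposal for the width runs against what the text explicitly cautions.

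For $X^A$, your route is the right one: realize the pre-limit contour of the forest via Theorem~\ref{thm:jccp} as a reflected-and-killed spectrally positive L\'evy process, prove a functional CLT where (H1) furnishes the drift $-\alpha$, (H2) the variance $2\beta$, and (H3) is the Lindeberg-type condition killing the jump part, then apply the continuous mapping theorem to the reflection-at-infimum and killing maps. You should, however, make the (implicit) space/time rescaling explicit, and fold in the remark that the roots must carry forward-recurrence-time lifetimes $V_A^\star$ rather than $V_A$ --- this is what makes $Z^A$ continuous at $0$ and is not cosmetic.

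For $Z^A$, the difficulty you name (discontinuity of the level-crossing functional on Skorokhod space) is genuine, but your primary resolution --- identify $\lim Z^A$ with the local time of $X-\underline{X}$ and read off the Feller SDE via Ray--Knight --- is precisely what the notes tell you not to assume. The paragraph after the theorem states: ``the theorem does not specify that $Z$ is the local time process of $X-\underline{X}$.'' This is not an oversight: the theorem asserts convergence of the pair $(X^A,Z^A)$ and gives the law of the marginal $Z$, but deliberately stops short of asserting $Z=L(X-\underline{X})$, because the level-counting functional does not transfer convergence, and a ``regularize the functional and control errors uniformly in $A$'' plan does not repair this as easily as you suggest. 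Your second alternative --- establish tightness of $Z^A$ directly from the near-critical CMJ structure supplied by (H1) and identify any subsequential limit as the unique solution of the stated SDE via a martingale problem, proving joint convergence with $X^A$ along the way --- is the one to carry out, and is much closer to what \citet{LSZ13} actually do. The Ray--Knight theorem is useful here at most as a heuristic to guess the form of the limiting SDE and its constants (the $1/\beta$ clock change you mention), not as a step of the proof.
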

Note that $-\underline{X}$ is a local time at 0 for $X-\underline{X}$, so the limiting contour process $X-\underline{X}$ is indeed killed when its local time hits $\zeta$. 

The fact that the width processes $Z^A$ converge cannot be deduced from the convergences of the contours $X^A$, since the local time functional (mapping the tree to its width process) is not continuous. In this direction, notice that the theorem does not specify that $Z$ is the local time process of $X-\underline{X}$. 

Last, the convergence in Skorokhod space of the contours ensures that the splitting trees themselves converge in the Gromov--Hausdorff sense to what we could call Brownian tree, in a wider sense than the CRT (no normalization of the contour excursion interval, possible subcriticality).

\begin{rem}
Actually, the previous theorem only holds if the lifetimes of the $[A\zeta]$ progenitors are (i.i.d. and) distributed as the forward recurrence time $V_A^\star$ of $V_A$, otherwise the width process is not continuous at 0.
$$
\PP(V_A^\star\in dx) =\frac{\PP(V_A\ge x)}{\EE(V_A)}\, dx\qquad x>0.
$$
Note that $V_A^\star$ is distributed like $V_A$ if (and only if) $V_A$ is exponentially distributed. 
\end{rem}

\chapter{Reduced Trees}
\label{chap:reduced}

For a real tree $\tr$ and a fixed real number $T>0$, the so-called \emph{reduced tree} at height $T$ is the tree spanned by points at distance $T$ from the root
$$
\{y\in \tr: \exists x\in \tr, y\preceq x, d(\rho,x)=T\}.
$$
It is usually called \emph{reconstructed tree} in phylogenetics and \emph{coalescent tree} in population genetics. Its topology can be understood from the topology of the sphere of $\tr$ with center $\rho$ and radius $T>0$
$$
\trT:= \{x\in\tr : d(\rho,x)=T\},
$$
which will thus be the focus of the present chapter.

\section{The comb metric}

Most of this section is taken from \cite{LUB16}.

\subsection{Definition and examples}

Let $I$ be a compact interval and $f:I\to \R_+$ such that for any $\varepsilon >0$, $\{f\ge \varepsilon\}$ is finite. 

For any $s,t\in I$, define $\dtf$ by
$$
\dtf(s,t) = 2\sup_{(s\wedge t, s\vee t)} f.
$$
It is clear that $\dtf$ is a pseudo-distance on $\{f=0\}$ and that it is ultrametric, in the sense that
$$\dtf(r,t)\le \max(\dtf(r,s),\dtf(s,t)) \qquad r,s,t\in I.
$$
It is a distance on $\{f=0\}$ whenever $\{f\not=0\}$ is dense in $I$ for the usual distance. This may not be the case in general, so we need to consider $\dot{I}$ the quotient space $\{f=0\}|_\sim$ where $\sim$ is the equivalence relation 
 $$
 s\sim t \Leftrightarrow \dtf(s,t)=0 \Leftrightarrow f=0 \mbox{ on }[s\wedge t,s\vee t].
 $$

\begin{dfn}
\label{dfn:comb}
We call $f$ a \emph{comb-like function} or \emph{comb}, and $\dtf$ the \emph{comb metric} on $\dot{I}$.
\end{dfn}
\noindent
Let us give the canonical example of a comb. 
Let $\tr$ be an oriented $\R$-tree with finite length, total order $\le$ and jumping contour process $h$ associated (for example) with its length measure (i.e., the mass measure $\mu$ is taken equal to the length measure $\ell$). Let $T>0$ such that the sphere
$$
\trT= \{x\in\tr : d(\rho,x)=T\}
$$
has finite cardinality $N_T\ge 2$. Let $x_1\le \cdots \le x_{N_T}$ denote its elements labelled in the order $\le$. Then for any $1\le i <j\le N_T$, writing $s_i:=\inf p_h^{-1}(\{x_i\})$ (each set $p_h^{-1}(\{x\})$ is actually a singleton 
in the case when the sphere is finite),
$$
d(x_i,x_{j})= h(s_i) + h(s_{j})- 2 \inf_{[s_i,s_{j}]}h = 2(T- \inf_{[s_i,s_{j}]}h)= 2\max(h_i,\ldots, h_{j-1}),
$$
 where
$$
h_i:=T- \inf_{[s_i,s_{i+1}]}h.
$$
In conclusion, the metric on $\trT$ is isomorphic to the comb metric $\dtf$ on $\dot I$, where $I=[1,N_T]$ and $f:=\sum_{i=1}^{N_T-1}h_i\indicbis{i}$. We will extend this description to all locally compact $\R$-trees in Section \ref{sec:spheres}.
\\
\\
Fig \ref{fig:comb} shows a comb and how an ultrametric tree can be embedded into it. As in Fig \ref{fig:combleftright} the same metric space can be represented by identifying points of $\dot I$ to their left-neighbour in $I\setminus \dot I$ (or to their right-neighbour) and reporting the distances accordingly.\\
\begin{figure}[!ht]
\includegraphics[width=\textwidth]{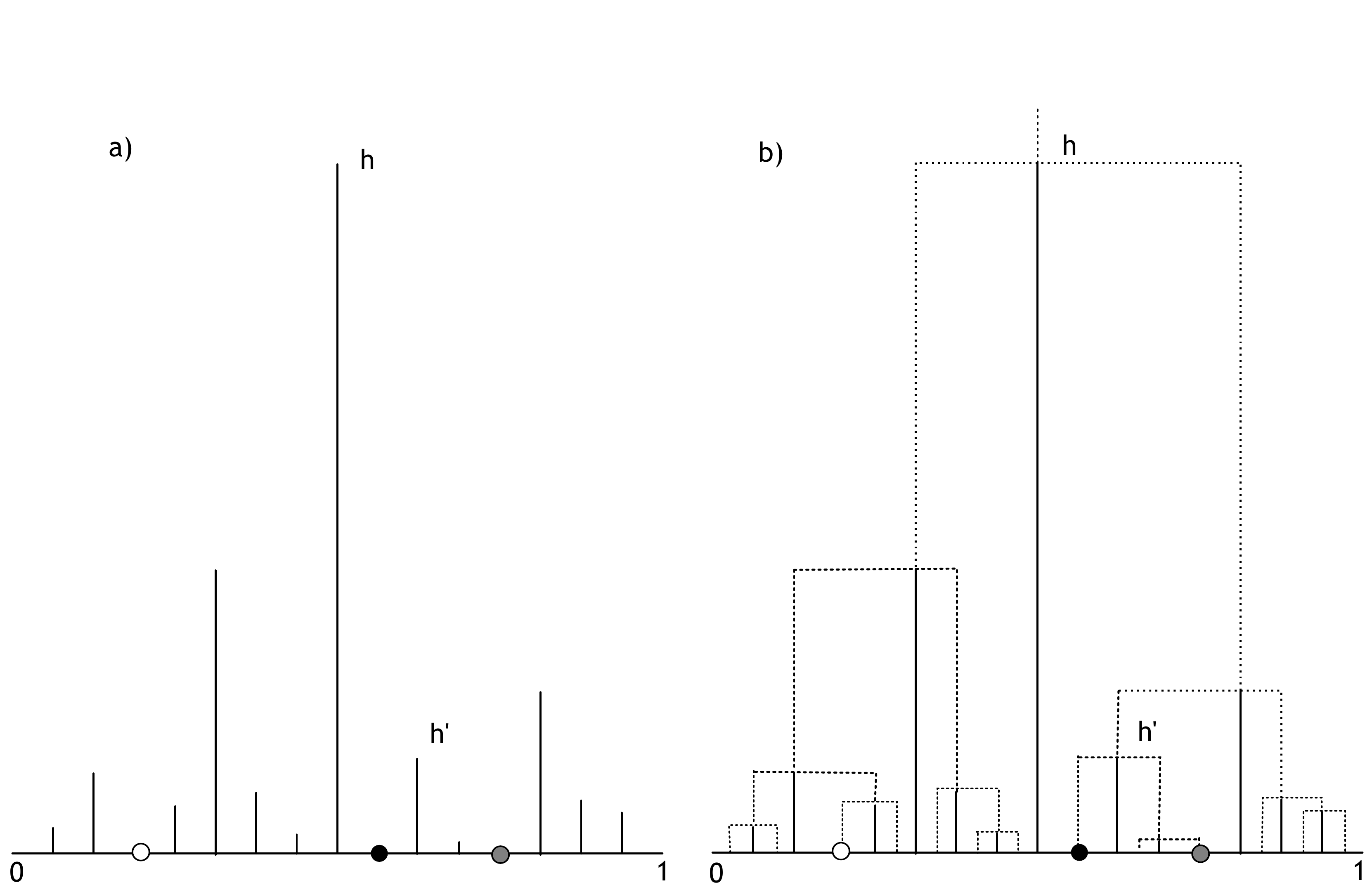}
\caption{a) A comb-like function with finite support on $[0,1]$. The distance between the black dot and the grey dot is $h'$, whereas $h$ is the distance from either of these dots to the white dot; b) In dashed lines, the ultrametric tree associated to the comb shown in a).}
\label{fig:comb}
\end{figure}
\begin{figure}[!ht]
\includegraphics[width=\textwidth]{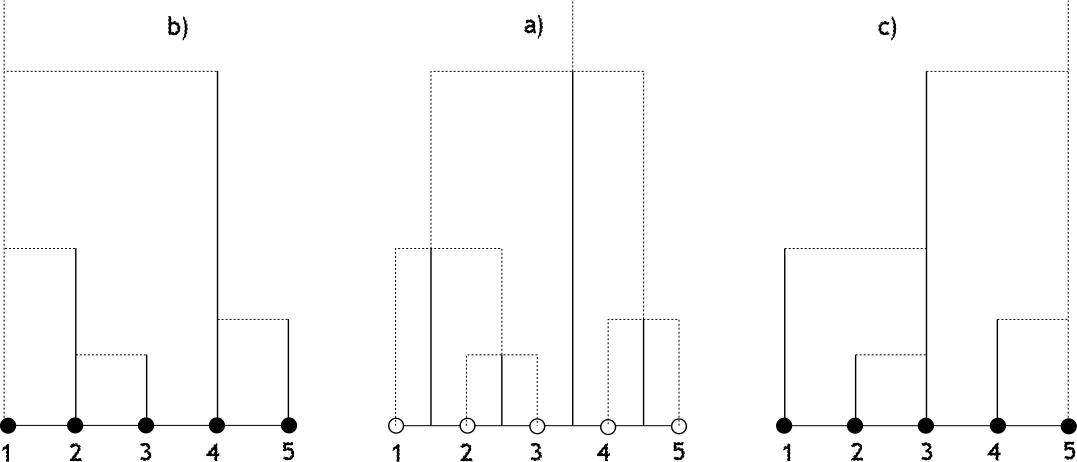}
\caption{a) A comb with finite support, and the associated ultrametric tree in dashed lines; equivalent representations of this ultrametric space can be obtained by reporting all tips of the comb to the left (b) or to the right (c).}
\label{fig:combleftright}
\end{figure}
\begin{rem}
The space $(\dot{I}, \dtf)$ is not complete in general. To make it complete, one has to distinguish for each point $t\in I$ between its \emph{left face} $(t,l)$ and its \emph{right face} $(t,r)$. The distance $\dtf$ is extended to the space $I\times\{l,r\}$ by the following definitions for $s< t\in I$
$$
\dtf((s,r),(t,l)) = 2\sup_{(s, t)} f, \qquad \dtf((s,l),(t,l)) = 2\sup_{[s, t)} f, 
$$
$$
\dtf((s,r),(t,r)) =2\sup_{(s, t]} f,  \qquad \dtf((s,l),(t,r)) = 2\sup_{[s, t]} f,
$$
and the symmetrized definitions for $s> t$. If $s=t$, the four last quantities are respectively defined as $f(t)$, 0, 0, $f(t)$. This extension of $\dtf$ is a pseudo-distance and it can be shown \citep{LUB16} that the associated quotient space $\bar{I}$ is a compact, ultrametric space called \emph{comb metric space}. Actually the converse is also true, as states the next theorem.
\end{rem}

\begin{thm}[\citealt{LUB16}]
\label{thm:second sense}
Any compact ultrametric space is isometric to a comb metric space.
\end{thm}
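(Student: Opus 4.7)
The strategy is to linearize the canonical hierarchy of closed balls of $(U,d)$, read off a comb-like function from the linearization, and build an isometry onto the associated completed comb space.

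First I would set up the ball tree. By ultrametricity and compactness, for every $r>0$ the partition of $U$ into closed $r$-balls is finite (balls are pairwise disjoint clopen subsets of $U$), and these partitions refine as $r\downarrow 0$. Collect all closed balls of $U$ into the hierarchy $\mathcal{B}$ rooted at $U$. Each internal ball $B\in\mathcal{B}$ (meaning $|B|>1$) has finitely many \emph{immediate children} --- balls $C\subsetneq B$ with no ball strictly between --- and at least two of them (the spectrum of distances from any fixed point in $B$ is countable with $0$ as only accumulation point, so there is always a largest sub-diameter strictly below $\mathrm{diam}(B)$). For every $\varepsilon>0$ only finitely many $B\in\mathcal{B}$ have $\mathrm{diam}(B)\ge\varepsilon$, so the set of internal balls is countable. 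A key observation: two points $x\ne y$ in distinct immediate children of $B$ satisfy $d(x,y)=\mathrm{diam}(B)$.

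Second, I would build the comb. Choose once and for all a linear order on the immediate children of each internal $B$, and define weights $\nu$ on balls by $\nu(U):=1$ and $\nu(C_i):=\nu(B)/k$ for the $k$ children $C_1,\ldots,C_k$ of $B$. Inductively assign each ball $B$ a closed subinterval $J_B\subseteq[0,1]$ of length $\nu(B)$: set $J_U=[0,1]$; when processing an internal $B$, split $J_B$ into $k$ consecutive closed pieces of length $\nu(B)/k$ --- in the chosen order of children --- separated by $k-1$ single \emph{gap points}. Each child $C_i$ inherits its piece as $J_{C_i}$. Define $f:[0,1]\to[0,\infty)$ by $f(p):=\tfrac12\mathrm{diam}(B)$ at every gap point $p$ created by the split of $B$, and $f=0$ elsewhere; since $\{B\in\mathcal{B}:\mathrm{diam}(B)\ge 2\varepsilon\}$ is finite, $\{f\ge\varepsilon\}$ is finite, and $f$ is comb-like.

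Third, I would define $\phi:U\to\bar I$ and check it is an isometry. For a leaf child $\{x\}$ of an internal $B$ (so $x$ is isolated), $\phi(x)$ is the equivalence class in $\dot I$ of the piece $J_{\{x\}}$; for non-isolated $x$, the chain of internal ancestors $(B_x^{(n)})$ is infinite and the $\nu$-weighting gives $\mathrm{length}(J_{B_x^{(n)}})=\nu(B_x^{(n)})\le 2^{-n}\to 0$, so $\bigcap_n J_{B_x^{(n)}}$ is a single point (or face) of $\bar I$, defined to be $\phi(x)$. For $x\ne y$ in $U$, let $B$ be the smallest ball containing both, so $\mathrm{diam}(B)=d(x,y)$; the gap point $p$ of $J_B$ separating the two children containing $x$ and $y$ lies strictly between $\phi(x)$ and $\phi(y)$ with $f(p)=\tfrac12 d(x,y)$, while every other gap point between them corresponds to a split strictly inside one of those two children, hence has strictly smaller $f$-value. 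Therefore $\sup_{(\phi(x),\phi(y))}f=\tfrac12 d(x,y)$ and $\bar{d}_f(\phi(x),\phi(y))=d(x,y)$.

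The main subtlety will be surjectivity of $\phi$ onto $\bar I$ together with the matching of left/right faces at gap points. Each equivalence class of $\dot I$ is either a leaf piece (realized by $\phi(x)$ for isolated $x$) or the limit of a nested sequence of internal pieces; such a sequence corresponds via the construction to a nested chain of balls of $U$ whose diameters strictly decrease (distinct nested balls in an ultrametric have strictly decreasing diameters, and only finitely many balls have diameter $\ge\varepsilon$, so the decrease tends to $0$), and compactness of $U$ then supplies a unique limit point $x\in U$ for which $\phi(x)$ equals the given class. At a gap point $p$ between children $C,C'$ of $B$, the faces $(p,l)$ and $(p,r)$ coincide in $\bar I$ with the adjacent equivalence classes, and those classes in turn are $\phi$ of specific ``extremal'' points of $C$ and $C'$ obtained by always descending along the rightmost, resp.\ leftmost, child at each subsequent split (these extremal points exist in $U$ by the same compactness argument). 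This face-matching, and the verification that no ``ghost'' class appears in $\dot I$, is the most delicate bookkeeping of the proof.
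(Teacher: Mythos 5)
These lecture notes state the theorem with a citation to LUB16 and do not reproduce a proof, so there is nothing in-text to compare against; I can only assess your outline on its own terms. The architecture is the natural one and the main ideas are right: in a compact ultrametric space the closed balls form a countable, finitely-branching, $\ge 2$-ary hierarchy (your distance-spectrum argument is correct); the uniform weights $\nu$ force nested intervals to shrink to points; placing a gap of height $\tfrac12\mathrm{diam}(B)$ between consecutive children of $B$ gives a comb; and your observation that the highest gap strictly between $\phi(x)$ and $\phi(y)$ is a gap created by the split of the smallest ball containing both is exactly what makes the isometry work when $\phi(x)$ and $\phi(y)$ sit at distinct positions of $I$. (Small inaccuracy: the intervening gap points of lower height are in arbitrary children of $B$ between $C_i$ and $C_j$, not only in the two children containing $x$ and $y$; the conclusion is unchanged.)

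The one case you should write out, because it is easy to get wrong and it is precisely where the ``delicate bookkeeping'' you flag bites, is the face-to-face case. If $x$ descends through the rightmost child of $C_i$ at every level and $y$ through the leftmost child of $C_{i+1}$ at every level (adjacent children of $B$), then $\phi(x)=(p,l)$ and $\phi(y)=(p,r)$ for the same gap point $p$; the open interval $(\phi(x),\phi(y))$ in $I$ is empty and your formula $\sup_{(\phi(x),\phi(y))}f$ does not apply. You then need $\bar{d}_f\bigl((p,l),(p,r)\bigr)=2f(p)=\mathrm{diam}(B)=d(x,y)$. This is the limit of $2\sup_{(s,t)}f$ as $s\uparrow p$, $t\downarrow p$ and is the only value making the completed pseudo-metric continuous and ultrametric; beware that the display in the notes writes $f(t)$ at coincident points, which appears to be off by a factor of $2$, so check the convention in LUB16 rather than copying it. Once the face convention is fixed, the compactness argument producing these extremal points, together with the surjectivity argument you sketch (every class of $\dot I$ is a leaf piece or a singleton intersection, every face of a gap is reached by an extremal descent), closes the bookkeeping and the outline fills in to a correct proof.
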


\subsection{Spheres of $\R$-trees}
\label{sec:spheres}
Here, we consider a locally compact $\R$-tree $\tr$ and we assume that the sphere
$$
\trT= \{x\in\tr : d(\rho,x)=T\}
$$
is not empty. Note that by the four-points condition, for any $x,y,z\in\trT$,
\begin{multline*}
T+d(x,z)=\\d(\rho,y)+d(x,z)\le \max\left[d(\rho,x)+d(y,z), d(\rho,z)+d(y,x)\right]\\ =\max \left[T+d(y,z), T+d(y,x)\right],
\end{multline*}
which yields $d(x,z)\le \max \left[d(y,z), d(y,x)\right]$, that is the metric induced by $d$ on $\trT$ is ultrametric. 

Since $\tr$ is locally compact, $\trT$ is a compact ultrametric space and Theorem \ref{thm:second sense} ensures that provided it has no isolated point, it is isometric to a comb metric space. There is actually an isometry between $\trT$ and a comb metric space preserving the order on $\trT$ inherited from the order $\le$ on $\tr$.
Let $h$ denote the jumping contour process of the tree truncated at height $T$, which is the closed ball with center $\rho$ and radius $T$.
\begin{exo}
Assume that $\trT$ has no isolated point. Prove that $\{h=T\}$ has no isolated point and empty interior. Also prove that $\trT$ has empty interior.
\end{exo}
Recall from the paragraph p.\pageref{subsec:local time} on local time, that since $\{h=T\}$ is perfect, we can construct a local time at level $T$ for $h$, that is a nondecreasing, continuous map $L^T:[0,\infty)\to [0,\infty)$ such that $L^T(0)=0$ and for any $s<t$
$$
L_t^T>L_s^T \Leftrightarrow (s,t)\cap\{h=T\}\not=\varnothing.
$$
Let $I=[0,L^T_{\sigma_h}]$, and set
$$
\dottrT:= \{x\in\trT : \exists (x_n^+)\uparrow, (x_n^-) \downarrow\in\trT, \lim_n\uparrow x_n^+=\lim_n\downarrow x_n^-=x\},
$$
where the sequences in the previous definition are requested to be strictly monotonic.
\begin{thm}[\citealt{LUB16}] 
\label{thm:comb-reducedtree}
Assume as previously that $\tr$ is a locally compact $\R$-tree such that $\trT$ is not empty and has no isolated point. Then there is a comb-like function $f$ on $I$ and two global isometries $\dot\theta:(\dot I, \dtf)\to(\dottrT,d)$ and $\bar\theta:(\bar I, \dtf)\to(\trT,d)$ preserving the order and mapping the Lebesgue measure to the push forward  $\mu^T$ of the measure $dL^T$ by $p_h$.
\end{thm}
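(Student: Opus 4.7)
The plan is to feed the machinery of Section \ref{subsec:local time} to the closed set $\{h=T\}\subset[0,\sigma_h]$, which by the exercise preceding the theorem is perfect with empty interior. This produces a continuous, non-decreasing local time $L^T:[0,\sigma_h]\to I$ with $I=[0,L^T_{\sigma_h}]$, whose points of increase are exactly $\{h=T\}$. Let $J(u):=\inf\{t:L^T(t)>u\}$ be its right-continuous inverse. Since $\{h=T\}$ is closed, $J$ takes values in $\{h=T\}$; and since $L^T$ is constant on each connected component (``gap'') of $\{h<T\}$, the jump times of $J$ are in bijection with these gaps, the gap at a jump point $u$ being the open interval $(J(u^-),J(u))$.

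Define the candidate comb function $f:I\to\R_+$ by $f(u):=T-\inf_{[J(u^-),J(u)]}h$, which vanishes when $J$ is continuous at $u$ and otherwise equals the depth of the gap indexed by $u$. First, $f$ is comb-like: for every $\varepsilon>0$, $\{f\ge\varepsilon\}$ is finite. Indeed, each such $u$ corresponds to a pair of $\le$-consecutive sphere points whose most recent common ancestor sits at height $\le T-\varepsilon$, hence to a transition between distinct $2\varepsilon$-balls of the ultrametric space $\trT$; and by local compactness of $\tr$ the closed $T$-ball is compact, so $\trT$ is compact and has only finitely many $2\varepsilon$-balls.

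I would then define $\bar\theta:\bar I\to\trT$ by $\bar\theta((u,l)):=p_h(J(u^-))$ and $\bar\theta((u,r)):=p_h(J(u))$, with restriction $\dot\theta:\dot I\to\dottrT$. The backbone of the isometry is the identity, for $u<v$ in $I$,
$$\inf_{[J(u),J(v)]}h\;=\;T-\sup_{w\in(u,v]}f(w),$$
together with its three variants over $[J(u^-),J(v)]$, $[J(u),J(v^-)]$ and $[J(u^-),J(v^-)]$; each follows from the observation that the gaps entirely contained in the relevant interval of times are precisely those indexed by the $w$ at which $J$ jumps inside the corresponding interval of indices. Plugging these into the universal formula $d(p_h(s),p_h(t))=2T-2\inf_{[s,t]}h$ reproduces the four defining formulas for $\dtf$ on the left/right-face combinations, so $\bar\theta$ is a global isometry; and $\bar\theta(\dot I)\subset\dottrT$ because every non-jump $u\in\dot I$ can be approached from either side by strictly monotone sequences of jump points, whose $\bar\theta$-images accumulate strictly monotonically to $\bar\theta(u)=p_h(J(u))$.

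Finally, order-preservation is automatic because both orders $\le$ on $\trT$ (inherited from $\tr$) and on $\bar I$ (inherited from $I$) coincide with the order in which the exploration process first visits each sphere point. For the measure: $L^T$ being continuous and non-decreasing with Stieltjes measure $dL^T$ carried by $\{h=T\}$ forces $J$ to push Lebesgue on $I$ onto $dL^T$, and composing with $p_h$ then yields the pushforward $(p_h)_*dL^T=\mu^T$ by the very definition of $\mu^T$. The main obstacle I anticipate is the comb-finiteness step: naively bounding excursion depths by jumps of the contour $h$ does not work, and one really needs the ultrametric-compactness argument above, which is the one essential place where the hypothesis of local compactness of $\tr$ enters.
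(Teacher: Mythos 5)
The paper imports this theorem from \citet{LUB16} without proof, so there is no in-paper argument to compare against; I evaluate your proposal on its own merits. The construction is the expected one given the preparatory material (devil's-staircase local time $L^T$, right-continuous inverse $J$, gap-depth comb $f$), and I believe it is correct. The key identity $\inf_{[J(u),J(v)]}h=T-\sup_{(u,v]}f$, together with its three face variants, does reproduce the four formulas for $\dtf$ on $\bar I$: the gaps wholly contained in $[J(u),J(v)]$ are exactly those indexed by jump times $w\in(u,v]$ of $J$, using that $J$ is strictly increasing (by continuity of $L^T$) and that $J(u),J(v)$, lying in the closed set $\{h=T\}$, can never sit in the interior of a gap, so no gap partially overlaps $[J(u),J(v)]$.

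Two small points deserve polish. For the finiteness of $\{f\ge\varepsilon\}$, which you correctly flag as the one place where local compactness enters, the cleanest argument is direct: if $u_1<u_2<\cdots$ all satisfied $f(u_k)\ge\varepsilon$, then for $m<k$ the interval $[J(u_m^-),J(u_k^-)]$ would contain the gap at $u_m$, giving $d\bigl(p_h(J(u_m^-)),p_h(J(u_k^-))\bigr)\ge2\varepsilon$; infinitely many pairwise $2\varepsilon$-separated points contradict total boundedness of the compact sphere $\trT$. Your phrasing via ``transitions between $2\varepsilon$-balls'' is sound but tacitly uses the fact that in a comb-coded ultrametric, balls of fixed radius are order-intervals --- worth a sentence. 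You also leave surjectivity of $\bar\theta$ implicit; note that every $x\in\trT$ has a unique $p_h$-preimage $t$ (two would force $h\equiv T$ on a nondegenerate interval, impossible since $\{h=T\}$ has empty interior), and that $t$ is either a gap endpoint or a two-sided accumulation point of $\{h=T\}$, hence the image under $\bar\theta$ of one of the faces at $L^T_t$.
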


\section{Coalescent Point Processes}

\subsection{The reduced tree of splitting trees, of the Brownian tree}

Consider a splitting tree $\tr$ with lifespan measure $\Lambda$ satisfying $\int_{(0,\infty]} (r\wedge 1)\,\Lambda(dr)<\infty$. We have seen in the last chapter that $\tr$ is an oriented $\R$-tree naturally endowed with the associated total order $\le$, and that it has locally finite length. Taking the mass measure equal to the length measure $\ell$, the jumping contour process $X$ of the tree $\tr$ truncated below $T$  is well-defined and thanks to Theorem \ref{thm:jccp}, it has the law of the process $(Y_t-t;t\ge0)$, where $Y$ is the subordinator with Lévy measure $\Lambda$, reflected below $T$ and killed upon hitting 0.

So $\tr$ falls into the category of canonical examples given after Definition \ref{dfn:comb} and conditional on $N_T\ge 1$ (where $N_T=\#\trT$), it is isometric to the comb metric space $(\dot I, \dtf)$, where $I=[1,N_T]$ and $f:=\sum_{i=1}^{N_T-1}H_i\indicbis{i}$, with
$$
H_i:=T- \inf_{[\sigma_i,\sigma_{i+1}]}X,
$$
and $\sigma_i$ is the $i$-th visit of $T$ by $X$. 

Note that $\PP_x(N_T\not=0)=\PP_x(\tau_T^+<\tau_0)$, where the subscript $x$ records the lifetime $x$ of the progenitor, which is also the starting point of the contour process, and $\tau_z^+$  (resp. $\tau_y$) denotes the first hitting time by $X$ of $[z,+\infty)$ (resp. of $\{y\}$). 

Also note, thanks to the strong Markov property of $X$, that conditional  on $N_T\ge 1$, $N_T$ is geometric with failure probability $\PP_T(\tau_T^+>\tau_0)$. Furthermore, conditional on $N_T=n$, the $H_i$'s are $n$ i.i.d. random variables, all distributed as the depth of the excursion of $X$ away from $T$ conditioned to be smaller than $T$. It is then elementary to get the following result.

\begin{prop}
The sphere $\tr^T$ of the splitting tree $\tr$ is non empty with probability $\PP_x(\tau_T^+<\tau_0)$. Conditional on being non-empty, it is isometric to the random comb $\sum_{i\ge 1}H_i\indicbis{i}$, where the $H_i$'s form a sequence of i.i.d. random variables killed at its first value larger than $T$, and whose common distribution is given by 
\begin{equation}
\label{eqn:formula_H}
\PP(H_1>t)=\PP_T(\tau_{T-t}<\tau_T^+).
\end{equation}
We say that $\trT$ is (isometric to) a \emph{coalescent point process} (CPP).
\end{prop}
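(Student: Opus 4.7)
By Theorem~\ref{thm:jccp}, conditional on the root's lifetime $x$, the jumping contour process of $\tr$ truncated below $T$ is the process $X_s=Y_s-s$ started at $x$, reflected below $T$ and killed at $0$. The canonical construction preceding the proposition identifies the sphere $\trT$ (in an order-preserving way) with the successive visits $\sigma_1<\cdots<\sigma_{N_T}$ of $X$ at level $T$, and its metric with that of the comb $\sum_{i=1}^{N_T-1}h_i\indicbis{i}$ with $h_i=T-\inf_{[\sigma_i,\sigma_{i+1}]}X$. It is therefore enough to find the joint law of $N_T$ and $(h_i)$, which will fall out of the strong Markov property of $X$.

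Since $X$ has only positive jumps, negative drift, and is killed on reaching $0$, the sphere $\trT$ is non-empty iff $X$ reaches $T$ before hitting $0$, yielding $\PP_x(\trT\ne\varnothing)=\PP_x(\tau_T^+<\tau_0)$. Conditional on this event, I apply the strong Markov property at $\sigma_1=\tau_T^+$ and then iteratively at $\sigma_2,\sigma_3,\ldots$: the post-$\sigma_1$ trajectory decomposes into a sequence of i.i.d.\ excursions of $X$ away from $T$ under $\PP_T$, each returning to $T$ with probability $p:=\PP_T(\tau_T^+<\tau_0)$ and being killed at $0$ with probability $1-p$, independently across excursions. Hence $N_T$, the number of such excursions (equivalently, of visits to $T$), has $\PP(N_T=k)=p^{k-1}(1-p)$ on $\{1,2,\ldots\}$, matching the geometric law recalled just before the proposition.

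The teeth $h_i$ are by construction the depths of the first $N_T-1$ (successful) excursions, so by the same decomposition they appear as the first $N_T-1$ terms of an i.i.d.\ sequence whose generic term $H$ is the depth of a single excursion. To fit the ``killed at first value $>T$'' description, it is cleanest to read these depths off the \emph{unkilled} process $(Y_s-s)$ under $\PP_T$: then killing of the reflected contour corresponds exactly to the excursion infimum going below $0$, i.e., to an unkilled depth $>T$, so the observed $h_i$'s are precisely the entries preceding the first $>T$ value of an i.i.d.\ sequence. Since $(Y_s-s)$ has no negative jumps, its excursion infimum falls below $T-t$ iff it hits level $T-t$ before returning to $T$, giving
$$
\PP(H>t)=\PP_T(\tau_{T-t}<\tau_T^+)\qquad (t>0).
$$
The only point requiring a bit of care is this passage between the killed/reflected contour (whose excursion depths are bounded by $T$) and the unkilled i.i.d.\ sequence (whose entries may exceed $T$); once it is made, the comb construction and the above laws assemble into the CPP structure stated.
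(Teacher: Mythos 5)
Your argument matches the paper's: identify the contour via Theorem~\ref{thm:jccp}, read the sphere off as the comb of excursion depths at level $T$, then invoke the strong Markov property at successive visits of $T$ to get the geometric count and the i.i.d.\ depths. The one place where you add content is the passage between the reflected/killed contour (whose excursion depths are by construction $\le T$) and the unkilled $Y_s-s$, which is what makes the ``i.i.d.\ sequence killed at its first value $>T$'' phrasing — and the validity of the formula $\PP(H>t)=\PP_T(\tau_{T-t}<\tau_T^+)$ for $t>T$, i.e.\ $T-t<0$ — literal rather than a rephrasing of a conditional law; the paper waves this step through (``It is then elementary\ldots''), and your observation that absence of negative jumps turns ``infimum below $T-t$'' into ``hits $T-t$'' is precisely the right justification.
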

\noindent
Fig \ref{fig:treecoal} shows a coalescent point process and how it codes for an ultrametric tree.
\begin{figure}[ht]
\unitlength 2mm 
\linethickness{0.2pt}
\centering
\begin{picture}(69,37)(-4,3)
\put(6,5){\vector(0,1){35}}
\put(4.961,5.961){\line(1,0){.9836}}
\put(6.928,5.961){\line(1,0){.9836}}
\put(8.895,5.961){\line(1,0){.9836}}
\put(10.863,5.961){\line(1,0){.9836}}
\put(12.83,5.961){\line(1,0){.9836}}
\put(14.797,5.961){\line(1,0){.9836}}
\put(16.764,5.961){\line(1,0){.9836}}
\put(18.731,5.961){\line(1,0){.9836}}
\put(20.699,5.961){\line(1,0){.9836}}
\put(22.666,5.961){\line(1,0){.9836}}
\put(24.633,5.961){\line(1,0){.9836}}
\put(26.6,5.961){\line(1,0){.9836}}
\put(28.568,5.961){\line(1,0){.9836}}
\put(30.535,5.961){\line(1,0){.9836}}
\put(32.502,5.961){\line(1,0){.9836}}
\put(34.469,5.961){\line(1,0){.9836}}
\put(36.436,5.961){\line(1,0){.9836}}
\put(38.404,5.961){\line(1,0){.9836}}
\put(40.371,5.961){\line(1,0){.9836}}
\put(42.338,5.961){\line(1,0){.9836}}
\put(44.305,5.961){\line(1,0){.9836}}
\put(46.272,5.961){\line(1,0){.9836}}
\put(48.24,5.961){\line(1,0){.9836}}
\put(50.207,5.961){\line(1,0){.9836}}
\put(52.174,5.961){\line(1,0){.9836}}
\put(54.141,5.961){\line(1,0){.9836}}
\put(56.108,5.961){\line(1,0){.9836}}
\put(58.076,5.961){\line(1,0){.9836}}
\put(60.043,5.961){\line(1,0){.9836}}
\put(62.01,5.961){\line(1,0){.9836}}
\put(63.977,5.961){\line(1,0){.9836}}
\multiput(4.961,32.961)(.983333,0){61}{{\rule{.4pt}{.4pt}}}
\put(4,33){\makebox(0,0)[cc]{\scriptsize $T$}}
\put(5.961,22.961){\line(1,0){1}}
\put(7.961,22.961){\line(1,0){1}}
\put(9.961,22.961){\line(1,0){1}}
\put(11.961,22.961){\line(1,0){1}}
\put(21.961,20.961){\line(1,0){1}}
\put(23.961,20.961){\line(1,0){1}}
\put(25.961,20.961){\line(1,0){1}}
\put(27.961,20.961){\line(1,0){1}}
\put(29.961,27.961){\line(1,0){1}}
\put(31.961,28.023){\line(1,0){1}}
\put(33.961,28.086){\line(1,0){1}}
\put(35.961,28.148){\line(1,0){1}}
\put(45.961,25.961){\line(1,0){1}}
\put(47.961,25.961){\line(1,0){1}}
\put(49.961,25.961){\line(1,0){1}}
\put(51.961,25.961){\line(1,0){1}}
\put(45.961,11.961){\line(-1,0){1}}
\put(43.961,11.961){\line(-1,0){1}}
\put(41.961,11.961){\line(-1,0){1}}
\put(39.961,11.961){\line(-1,0){1}}
\put(37.961,11.961){\line(-1,0){1}}
\put(35.961,11.961){\line(-1,0){1}}
\put(33.961,11.961){\line(-1,0){1}}
\put(31.961,11.961){\line(-1,0){1}}
\put(29.961,11.961){\line(-1,0){1}}
\put(27.961,11.961){\line(-1,0){1}}
\put(25.961,11.961){\line(-1,0){1}}
\put(23.961,11.961){\line(-1,0){1}}
\put(21.961,11.961){\line(-1,0){1}}
\put(19.961,11.961){\line(-1,0){1}}
\put(17.961,11.961){\line(-1,0){1}}
\put(15.961,11.961){\line(-1,0){1}}
\put(13.961,11.961){\line(-1,0){1}}
\put(11.961,11.961){\line(-1,0){1}}
\put(9.961,11.961){\line(-1,0){1}}
\put(7.961,11.961){\line(-1,0){1}}
\put(21.961,16.961){\line(-1,0){1}}
\put(19.961,16.961){\line(-1,0){1}}
\put(17.961,16.961){\line(-1,0){1}}
\put(15.961,16.961){\line(-1,0){1}}
\put(13.961,16.961){\line(-1,0){1}}
\put(11.961,16.961){\line(-1,0){1}}
\put(9.961,16.961){\line(-1,0){1}}
\put(7.961,16.961){\line(-1,0){1}}
\put(14,23){\line(0,1){10}}
\put(22,17){\line(0,1){16}}
\put(30,21){\line(0,1){12}}
\put(38,28){\line(0,1){5}}
\put(46,12){\line(0,1){21}}
\put(54,26){\line(0,1){7}}
\put(62,33){\line(0,-1){29}}
\put(14,34.5){\makebox(0,0)[cc]{\scriptsize 1}}
\put(22,34.5){\makebox(0,0)[cc]{\scriptsize 2}}
\put(30,34.5){\makebox(0,0)[cc]{\scriptsize 3}}
\put(38,34.5){\makebox(0,0)[cc]{\scriptsize 4}}
\put(46,34.5){\makebox(0,0)[cc]{\scriptsize 5}}
\put(54,34.5){\makebox(0,0)[cc]{\scriptsize 6}}
\thicklines
\put(9,33){\vector(0,-1){10}}
\put(17,33){\vector(0,-1){16}}
\put(25,33){\vector(0,-1){12}}
\put(33,33){\vector(0,-1){5}}
\put(41,33){\vector(0,-1){21}}
\put(49,33){\vector(0,-1){7}}
\put(11,28.75){\makebox(0,0)[cc]{\scriptsize $H_1$}}
\put(19,25.875){\makebox(0,0)[cc]{\scriptsize $H_2$}}
\put(27,28.125){\makebox(0,0)[cc]{\scriptsize $H_3$}}
\put(35,30.625){\makebox(0,0)[cc]{\scriptsize $H_4$}}
\put(43,20.625){\makebox(0,0)[cc]{\scriptsize $H_5$}}
\put(51,29){\makebox(0,0)[cc]{\scriptsize $H_6$}}
\end{picture}
\caption{A coalescent point process (upside down compared to previous pictures of combs) with 6 nonzero values (the 7th one is the first one larger than $T$). To recover an oriented ultrametric tree with 7 tips, draw horizontal lines from each tip left to the rightmost vertical line.}
\label{fig:treecoal}
\end{figure}
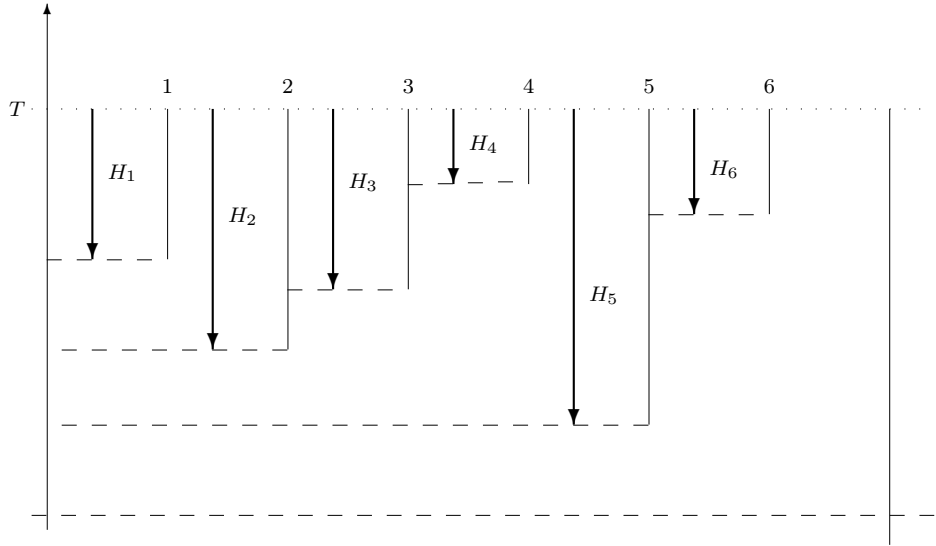
\noindent
By extension, we make the following definition.
\begin{dfn}
Let $\nu$ be a $\sigma$-finite measure on $(0,\infty)$ such that $\nu([\varepsilon,\infty))<\infty$ for all $\varepsilon>0$. Let $\mc$ be a Poisson point process on $(0,\infty)^2$ with intensity $\mbox{Leb}\,\otimes\,\nu$ and denote by $(S_i, H_i)_i$ its atoms. Finally, let $(D,H)$ denote the first (i.e., smallest in the first dimension) atom  such that $H>T$.
We will say that the random comb metric space associated with the comb $\sum_{i:S_i<D}H_i\indicbis{S_i}$ is a \emph{coalescent point process} with height $T$ and intensity measure $\nu$.
\end{dfn}
\noindent
The term `coalescent point process' has first been coined in the setting of the Brownian tree by \citet{Pop04}.
Recall that we called Brownian tree the tree coded by a positive Brownian excursion (CRT when the excursion length is normalized). 

The Brownian excursion has a local time at level $T$, which allows one to construct as in Theorem \ref{thm:comb-reducedtree} the comb giving the metric of the reduced tree at level $T$.  This comb is a `list', in the plane order, of the depths of excursions of the contour away from $T$. 
\begin{thm}[\citealt{Pop04, AP05}]
Conditional on being non-empty,  the sphere $\trT$ of the Brownian tree $\tr$ is a coalescent point process with height $T$ and intensity measure $\nu$, where $\nu$ is the push forward of the Brownian excursion measure by the depth mapping, i.e.
\begin{equation}
\label{eqn:formula_0}
\nu_0(dh) = \frac{dx}{2 x^2} 
\end{equation}
\end{thm}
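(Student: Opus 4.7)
The plan is to identify $\trT$ with the comb metric space built from the sub-excursions of the contour $\mathbf{e}$ below level $T$, and then recognize that comb as a CPP via It\^o's excursion theory applied at level $T$.

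First, I would verify the hypotheses of Theorem~\ref{thm:comb-reducedtree}. On the event $\{\trT\neq\varnothing\}$, the Brownian tree $\tr=\tr_\mathbf{e}$ is locally compact and $\trT$ has no isolated point, since the positive Brownian excursion $\mathbf{e}$ has no interval of constancy at any positive level and, on $\{\max\mathbf{e}>T\}$, the set $\{\mathbf{e}=T\}$ is a.s.\ a perfect subset of $[0,\sigma_\mathbf{e}]$. The theorem then yields a comb-like function $f$ on $I=[0,L^T_{\sigma_\mathbf{e}}]$ and an order-preserving global isometry from $(\bar I,\dtf)$ onto $\trT$. From the canonical example following Definition~\ref{dfn:comb} and from Figure~\ref{fig:treecoal}, the function $f$ records, at each local-time increment, twice the depth below $T$ of the corresponding excursion of $\mathbf{e}$ away from $T$.

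Second, I would invoke It\^o's excursion theory for $\mathbf{e}$ at the constant level $T$. On $\{\max\mathbf{e}>T\}$, by the strong Markov property at the successive hitting times of $T$, the excursions of $\mathbf{e}$ away from $T$ until $\mathbf{e}$ first descends from $T$ to $0$ form a Poisson point process indexed by $L^T$ with intensity $dL\otimes n$, where $n$ denotes It\^o's excursion measure for Brownian motion; moreover, $\mathbf{e}$ is terminated at the first downward sub-excursion of depth $T$ (the only way the pinned excursion can return to $0$). Splitting $n$ into upward and downward parts (independent by the symmetry of $n$), only downward sub-excursions contribute to the comb. The classical identity $n(\sup>a)=1/(2a)$ applied to downward sub-excursions gives $n(\text{depth}\in dh)=dh/(2h^2)=\nu_0(dh)$. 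By the mapping theorem for Poisson processes, the marked pairs $(S_i,H_i)$ — where $S_i$ is the local time at the $i$-th downward sub-excursion and $H_i$ its depth — form a Poisson point process on $(0,\infty)^2$ with intensity $\mathrm{Leb}\otimes\nu_0$, killed at the first atom with $H_i>T$. This is precisely the CPP with height $T$ and intensity $\nu_0$.

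The main obstacle is justifying It\^o's excursion theory for the \emph{pinned} process $\mathbf{e}$ rather than a free Brownian motion. The cleanest route is a Williams-type decomposition: on $\{\max\mathbf{e}>T\}$, if $g:=\inf\{s:\mathbf{e}_s=T\}$ and $d$ is the first hitting time of $0$ after $g$, then by the strong Markov property the process $(\mathbf{e}_{g+s})_{0\le s\le d-g}$ is distributed as a Brownian motion started from $T$ and killed at $0$, a Markov process to which It\^o's theory applies directly. The initial ascent of $\mathbf{e}$ on $[0,g]$ stays above $0$ and below $T$, hence generates no excursion of $\mathbf{e}$ away from $T$ on the downward side and contributes nothing to the comb $f$; similarly, the post-$d$ segment of $\mathbf{e}$ (which is empty when $\mathbf{e}$ is defined up to hitting $0$) is irrelevant. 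Once this decomposition is in place, the identification of $\trT$ with the claimed CPP is a direct consequence of the Poisson structure described above.
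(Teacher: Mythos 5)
Your overall strategy --- identify $\trT$ with the comb built from the excursions of the contour away from level $T$ via Theorem~\ref{thm:comb-reducedtree}, then read off the Poisson structure from It\^o's excursion theory at level $T$ --- is the right one, and it matches the sketch the paper gives just before stating the theorem. Your derivation of $\nu_0$ from the classical identity $n(\sup>a)=1/(2a)$ is also equivalent to the paper's own route via the scale function ($W(x)=2x$ together with $N(-\inf>x)=1/W(x)$), so no disagreement there.

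There is, however, a genuine gap in how you handle the pinned nature of the contour. You take $\mathbf{e}$ to be the \emph{normalized} Brownian excursion (which is what $\mathbf{e}$ denotes in the paper, in the CRT theorem of Aldous), and you claim that on $\{\max\mathbf{e}>T\}$ the post-$g$ segment $(\mathbf{e}_{g+s})_{0\le s\le d-g}$ is a free Brownian motion started at $T$ and killed at $0$. That Williams-type decomposition is \emph{false} for the normalized excursion: the lifetime constraint $\sigma_{\mathbf{e}}=1$ couples the post-$g$ path to the time already elapsed, so the excursions of $\mathbf{e}$ away from level $T$ do not form a Poisson point process, the inter-arrival local times are not i.i.d.\ exponential, and the depths $H_i$ are not i.i.d.\ --- that is, $\trT$ for the CRT is \emph{not} a coalescent point process in the sense of the paper's definition. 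The theorem in question concerns the Brownian tree coded by an \emph{unnormalized} positive Brownian excursion (equivalently, the excursion of the limiting contour in Theorem~\ref{thm:LSZ}), conditioned on $\{\max>T\}$. Under It\^o's excursion measure, which is Markov with Brownian transitions, the strong Markov property at the first passage of $T$ does give exactly the decomposition you want, and the rest of your argument then goes through verbatim. So the fix is merely to drop the normalization: replace $\mathbf{e}$ by an It\^o excursion conditioned on $\{\max>T\}$ (the parenthetical ``(CRT when the excursion length is normalized)'' in the paper signals that the CRT is a \emph{special case} of the Brownian tree, not that the CPP statement applies to it). As written, your invocation of the strong Markov property at the successive hitting times of $T$ is the step that fails for the normalized $\mathbf{e}$.
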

\noindent
More generally, as a follow up to Theorem \ref{thm:LSZ}, it can be shown that under suitable scaling, the coalescent point processes of (sub)critical splitting trees conditioned on reaching height $T$ converge to the Poisson point process of excursion depths of a Brownian motion (with negative drift). 

More specifically, under the assumptions (H1--H3) of Theorem  \ref{thm:LSZ}, if $H_1^A, H_2^A,\ldots$ denote the coalescence times of the splitting tree conditioned on reaching height $T$, then the point processes $\sum_{i\ge 1} \delta_{(\frac{i}{A}, H_i^A)}$ converge as $A\to\infty$ to a CPP with height $T$ and intensity measure $\nu$, where 
\begin{equation}
\label{eqn:formula_alpha}
\nu_\alpha ((x,\infty)) = \frac{\alpha}{1-e^{-\alpha x/\beta}}.
\end{equation}
\noindent
Formulae \eqref{eqn:formula_H}, \eqref{eqn:formula_0} and \eqref{eqn:formula_alpha} can actually all be rephrased in terms of the scale function $W$ of the relevant contour process, as we will now see. We refer the reader to \citet{BerBook1, KypBook} for more information about what follows.

Recall that a Lévy process $X$ with no negative jumps is characterized by the Laplace transform of its one-dimensional marginals
$$
\EE(\exp (-\lambda X_t)) =\exp(t\psi(\lambda)) \qquad t,\lambda \ge 0,
$$
where $\psi$ is called the \emph{Laplace exponent} of $X$. If $X_t=Y_t-t$ (where $Y$ is the subordinator with Lévy measure $\Lambda$), the Lévy--Khinchin formula gives
$$
\psi(\lambda) = \lambda-\int_{(0,\infty]} \left(1-e^{-\lambda r}\right)\,\Lambda(dr)\qquad \lambda\ge 0.
$$
If $X_t=-\alpha t +\sqrt{2\beta} B_t$, then $\psi(\lambda) = \alpha\lambda+\beta\lambda^2$.

Notice that $\psi(0)=0$, that $\psi$ is convex and has at most one positive root, here denoted $\eta$. 
Then there is a unique non-negative, increasing function $W$ called the \emph{scale function} such that
\begin{equation}
\label{eqn:LTW}
\int_0^\infty W(x)\,e^{-\lambda x}\,dx= \frac{1}{\psi(\lambda)}  \qquad \lambda\ge \eta.
\end{equation}
\noindent
In the case when $X_t=-\alpha t +\sqrt{2\beta} B_t$, it is not difficult to compute $W(x) = x/\beta$ when $\alpha=0$ and when $\alpha\not=0$,
\begin{equation}
\label{eqn:WMB}
W(x) = \frac{1-e^{-\alpha x/\beta}}{\alpha}\qquad x\ge 0.
\end{equation}
In general, one can prove (see \citealt{BerBook1}) that
$$
W(x) = \exp\left\{ \int_0^x \underline{N}(\sup>s)\, ds\right\} \qquad x\ge 0,
$$
where $\underline{N}$ is the excursion measure of $X-\underline{X}$ away from 0. When $X$ has finite variation, like when $X_t=Y_t-t$, $\underline{N}$ is merely the birth rate $b=\Lambda((0,\infty))$ times the law of $X$ started from a random jump with law $b^{-1}\Lambda$.

 In all cases, it is a consequence of the exponential formula for the Poisson point process of excursions of $X-\underline{X}$ away from 0 that
\begin{equation}
\label{eqn:proba-scale}
\PP_x(\tau_0<\tau_a^+)=\frac{W(a-x)}{W(a)}\qquad 0\le x\le a.
\end{equation}
In the cases with infinite variation, such as Brownian motion (with or without drift), $W(0)=0$ and
\begin{equation}
\label{eqn:depthmeas}
N(-\inf >x)=\frac{1}{W(x)}\qquad x\ge 0,
\end{equation}
where $N$ is the excursion measure of $X$ away from 0, so that Eqs \eqref{eqn:formula_0} and \eqref{eqn:formula_alpha} are a consequence of \eqref{eqn:WMB} and \eqref{eqn:depthmeas}.
In the case when $X_t= Y_t -t$, $W(0)=1$, and we can be more accurate in Eq \eqref{eqn:formula_H} using Eq \eqref{eqn:proba-scale}
$$
\PP(H_1>t)=\PP_T(\tau_{T-t}<\tau_T^+) = \frac{1}{W(t)},
$$
where $W$ can be identified inverting the Laplace transform \eqref{eqn:LTW}.
\begin{rem}
In \citet{LP13}, we have expressed the distribution of the coalescent point process of non binary branching trees, including Galton--Watson processes and continuous-state branching processes. One of the main difficulties is to cope with the existence of points with arbitrarily large degree in the tree.
\end{rem}

\subsection{A more general class of models}

In this section, we seek to investigate population processes which generate trees whose spheres are (isometric to) CPPs.\\

Consider a population where all individuals live and reproduce independently, and each individual is endowed with a trait (some random character living in $\R$ for simplicity) that evolves through time according to independent copies of the same, possibly time-inhomogeneous, Markov process $K$ with  generator $L_t=L(t,\cdot)$. Further assume what follows.
\begin{itemize}
\item This trait is non-heritable, in the sense that any individual born at time $t$ draws the value of her trait at birth from the same distribution $\nu_t$, independently of her mother's history;
\item All individuals give birth at the same, possibly time-inhomogeneous rate $b(t)$;
\item An individual holding trait $x$ at time $t$ dies at rate $d(t,x)$. 
\end{itemize}
\begin{thm}[\citealt{LS13}]
\label{thm:generalmodel}
Under the previously defined population model, starting with one individual at time 0 and conditional on having at least one alive individual at time $T$, the reduced genealogical tree at level $T$ is given by a coalescent point process with typical depth $H$ whose inverse tail distribution is given by
$$
W(t):=\frac{1}{\PP(H>t)} =\exp\left( \int_{T-t}^ T b(s)\, (1-q(s))\,ds\right)\qquad t\in[0,T],
$$
where $q(t)$ denotes the probability that an individual born at time $t$ has no descendants alive by time $T$.
\end{thm}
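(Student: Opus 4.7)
The proof hinges on the following observation: because the birth rate $b(\cdot)$ is trait-independent and the trait at birth is non-heritable, along any ancestral lineage of an individual alive at time $T$, birth epochs form an inhomogeneous Poisson process of rate $b(s)$, and each such birth is independently ``useful'' (meaning its daughter has at least one descendant alive at $T$) with probability $1-q(s)$, where $s$ is the birth time. Consequently, along any such lineage, the useful births whose daughter subtree lies strictly to the right of the lineage form an inhomogeneous Poisson process of rate $b(s)(1-q(s))$.

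Using the orientation of the tree, let $X_1$ denote the leftmost (i.e.\ $\le$-smallest) surviving individual at time $T$ and $\mathcal{L}_1$ her ancestral lineage from time $0$ to $T$. The coalescence depth $H_1$ between $X_1$ and her right neighbour in $\trT$ equals $T-\tau_1$, where $\tau_1$ is the most recent useful side-birth on $\mathcal{L}_1$, since the subtree born at $\tau_1$ contains the first tip to the right of $X_1$. The Poisson observation above yields
\[
\PP(H_1>t) \;=\; \PP(\text{no useful side-birth on } \mathcal{L}_1 \text{ in }[T-t,T]) \;=\; \exp\!\left(-\int_{T-t}^T b(s)(1-q(s))\,ds\right) \;=\; \frac{1}{W(t)}.
\]
For the full i.i.d.\ structure, the branching property combined with non-heritability guarantees that the subtree born at $\tau_1$ is independent of everything else and is distributed as a fresh copy of the population process started at $\tau_1$, conditioned on reaching $T$. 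Applying the theorem inductively to this subtree (of strictly smaller height $H_1<T$) yields, within it, a further i.i.d.\ sequence of coalescence depths with tail $1/W$, stopped at the first value exceeding $H_1$. When this within-subtree sequence exits its subtree, the next overall coalescence $T-\tau_2$ corresponds to the second-most-recent useful side-birth on $\mathcal{L}_1$; by the memoryless property of the Poisson process of useful side-births on $\mathcal{L}_1$, it is, conditionally on $H_1$, distributed exactly as a fresh copy with tail $1/W$ conditioned on exceeding $H_1$. Iterating this patching argument delivers $(H_1,H_2,\dots,H_{N_T-1})$ as an i.i.d.\ sequence with tail $1/W$, stopped at the first value exceeding $T$ --- i.e.\ a CPP with height $T$ and intensity $\nu((t,\infty))=1/W(t)$.

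The main obstacle will be to justify rigorously the ``concatenation'' of the intra- and inter-subtree coalescence times, since the planar order interleaves them and the set of useful side-births on $\mathcal{L}_1$ is itself random. A convenient route is an induction on the number $K$ of useful side-births on $\mathcal{L}_1$: conditionally on $K=k$ and on the side-birth times $\tau_1>\dots>\tau_k$, the $k$ surviving subtrees are independent copies of the process started at the $\tau_j$'s conditioned on survival, and the induction hypothesis applied to each of them, combined with the inhomogeneous Poisson distribution of $(\tau_j)$ along $\mathcal{L}_1$, yields the stated i.i.d.\ CPP structure. Alternatively, in analogy with the splitting-tree contour-process approach of Theorem~\ref{thm:jccp}, one can attempt to construct a Markovian contour enrichment of the present model and identify the coalescence depths with the depths of its excursions above level $T$, from which the formula for $W$ follows via It\^o's excursion measure and the exponential formula.
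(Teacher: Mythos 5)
Your core observation---that along the ancestral lineage of a tip alive at $T$, births occur at rate $b(s)$ (trait-independence of the birth rate) and each birth is ``useful'' with probability $1-q(s)$ independently of all else (non-heritability of the trait), so the useful side-births form an inhomogeneous Poisson process at rate $b(s)(1-q(s))$---is exactly the engine of the paper's proof, and your computation of $\PP(H_1>t)=W(t)^{-1}$ via this Poisson process along $\mathcal{L}_1$ is the $n=1$ case of the paper's argument. To make that step fully rigorous one should condition on the ancestral birth times $(x_j)$ and check that the Poisson structure on the intervals $I_j=[x_j,x_{j+1})$ is unaffected by that conditioning; the paper does this explicitly.

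The genuine gap lies in the joint i.i.d.\ structure, and you flag it yourself. You propose to decompose the tree into the subtrees hanging off the useful side-births of $\mathcal{L}_1$, apply the theorem recursively inside each subtree, and patch via the memorylessness of the Poisson process. But this forward recursion has no well-founded induction variable: neither the continuous height $H_1$ nor the number $K$ of side-births on $\mathcal{L}_1$ controls the nested recursion into sub-subtrees, and interleaving the intra- and inter-subtree coalescence times in planar order is precisely what needs to be proved, not assumed. The paper sidesteps this entirely by running a single backward induction on $n$: condition on $N_T\ge n$ and on $H_1=h_1,\dots,H_{n-1}=h_{n-1}$, observe that this conditioning determines the ancestral birth times $x_j=T-h_{i_j}$ of the \emph{current rightmost} tip $n-1$ (they are the right-running records of $(h_i)$), and check that the useful side-births to the right of that chain, during the intervals $I_j$, are still a Poisson process at rate $b(s)(1-q(s))$ conditionally on the $(x_j)$. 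This gives $\PP(H_n>t\mid H_1,\dots,H_{n-1},\,N_T\ge n)=W(t)^{-1}$ regardless of the $h_i$'s, and the i.i.d.\ CPP structure follows by induction on $n$ with no recursion into subtrees. You already have every ingredient (the Poisson thinning, the branching and non-heritability arguments); reorganizing so as to condition on the past rather than recurse into the future is what closes the gap, and is what the paper's proof actually does.
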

We will see later why the function $W$ defined in the previous statement becomes the scale function of the last section when there is no time-inhomogeneity and the inheritable trait is the age.

In addition, $W$ can be computed from the knowledge of $g$, where $g(t,\cdot)$ denotes the density of the death time of an individual born at time $t$.  
\begin{prop}[\citealt{LS13}]
\label{prop:NEW}
The function $W$ is solution to the following integro-differential equation
\begin{equation}
\label{eqn:hidden convolution}
W'(t) = b(T-t)\,\left( W(t) - \int_0^t \ W(s)\,g(T-t,T-s) ds \right)\qquad t\ge 0,
\end{equation}
with initial condition $W(0)=1$.
\end{prop}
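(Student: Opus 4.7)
The plan is to combine two computations: a direct differentiation of the explicit formula for $W$ provided in Theorem~\ref{thm:generalmodel}, and a one-step branching decomposition of the extinction probability $q$ obtained by conditioning on the founder's death time.

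First I would differentiate
$$W(t)=\exp\!\left(\int_{T-t}^T b(s)(1-q(s))\,ds\right)$$
directly. The fundamental theorem of calculus gives
$$W'(t)=W(t)\,b(T-t)\bigl(1-q(T-t)\bigr),\qquad W(0)=1.$$
Consequently, the announced equation~\eqref{eqn:hidden convolution} is equivalent to
$$W(t)\,q(T-t)=\int_0^t W(s)\,g(T-t,T-s)\,ds,$$
which, after the changes of variables $\sigma=T-t$ and $u=T-s$, rewrites as
$$q(\sigma)=\int_\sigma^T\frac{W(T-u)}{W(T-\sigma)}\,g(\sigma,u)\,du.$$

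Second I would prove this identity by a branching-at-the-root argument. Consider the founder born at time $\sigma$ and condition on her death time $D$, whose density is $g(\sigma,\cdot)$. If $D>T$, she is alive at $T$ and her lineage is certainly not extinct, contributing $0$ to $q(\sigma)$. If $D=u\le T$, the times at which she gives birth form a Poisson point process on $[\sigma,u]$ with intensity $b(s)\,ds$; since $b$ depends only on calendar time (not on the mother's trait), this Poisson property persists after conditioning on $D=u$. Each child born at time $s$ draws a fresh trait from $\nu_s$ and then evolves independently, with her lineage extinct before $T$ with probability $q(s)$. By Poisson thinning, the number of surviving sub-lineages is Poisson with mean $\int_\sigma^u b(s)(1-q(s))\,ds$, so the probability that all are extinct by $T$ equals
$$\exp\!\left(-\int_\sigma^u b(s)(1-q(s))\,ds\right)=\frac{W(T-u)}{W(T-\sigma)},$$
the last equality coming from splitting the integral defining $W$ at $u$. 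Integrating over $u$ against the density $g(\sigma,u)$ yields the desired formula for $q(\sigma)$, and substituting it back into $W'(t)=W(t)\,b(T-t)(1-q(T-t))$ delivers~\eqref{eqn:hidden convolution}.

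The main obstacle is making Step~2 rigorous in the time-inhomogeneous, trait-dependent setting: one must verify that on the event $\{D=u\}$ the birth process stays Poisson with intensity $b(s)\,ds$ on $[\sigma,u]$, and that the extinction probabilities $q(s)$ of the various sub-lineages remain mutually independent. The key ingredients are that $b$ does not depend on the trait, and that traits of newborns are drawn independently from $\nu_s$ and independently of the mother's history, so that the conditional law of the birth process given the trait trajectory and given $D=u$ does not involve the trait and is genuinely Poisson.
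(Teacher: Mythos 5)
Your proof is correct and follows essentially the same route as the paper's: both hinge on the renewal identity $q(t)=\int_t^T g(t,s)\,e^{-\int_t^s b(u)(1-q(u))\,du}\,ds$, rewriting of the exponential as $W(T-s)/W(T-t)$, and the differentiation $W'(t)=b(T-t)(1-q(T-t))\,W(t)$. The only difference is that the paper simply asserts the renewal identity, whereas you supply the Poisson-thinning/branching-at-root justification for it -- a worthwhile elaboration of a step the paper leaves implicit.
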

\begin{proof}
First observe that 
$$
q(t) = \int_t^T \, g(t,s) \,e^{-\int_t^s  b(u)\,(1-q(u))du}ds.
$$
Recalling that 
$$
W(t) = \frac{1}{\PP(H>t)} = \exp\left( \int_{T-t}^T b(s)\, (1-q(s))\,ds\right),
$$
we get
$$
q(t)= \int_t^T \, g(t,s) \, \frac{W(T-s)}{W(T-t)} ds,
$$
or equivalently
$$
q(T-t) = \int_0^t \, g(T-t,T-s) \, \frac{W(s)}{W(t)} ds.
$$
Now check that 
$$
W'(t) = b(T-t)\, (1-q(T-t))\, W(t).
$$
Equation \eqref{eqn:hidden convolution} is a consequence of the last two equations.
\end{proof}
\begin{rem}
In general, $g$ is not given directly in terms of the model ingredients $b$, $d$ and the generator $L_t$ of the trait process $K$. To compute $g$ and then $W$, one can proceed as follows. 
Recall that $g(t,\cdot)$ is the density of the death time of an individual born at time $t$, so that 
$$
g(t,s) = \int_\RR \nu_t (dx)\ u_s(t,x) \qquad s\ge t,
$$
where $u_s(t,x)$ is that density conditional on the value $x$ of the trait at birth ($K_t=x$), that is,
\begin{equation}
\label{eqn:ustx}
u_s(t,x):=\EE_{t,x} \left(d(s, K_{s})\ e^{-\int_t^s \, d(r,K_{r})dr}\right)\qquad s\ge t,
\end{equation}
where $\EE_{t,x}$ denotes the expectation associated to the distribution of $K$ started at time $t$ in state $x$.  
Now the Feynman-Kac formula ensures that $u_s$ is solution to 
\begin{equation}
\label{eqn:Feynman-Kac}
\frac{\partial u_s}{\partial t} (t,x) + L_t u_s (t,x) =   d(t,x)\, u_s(t,x),
\end{equation}
with terminal condition $u_s(s,x)= d(s,x)$. Specifically, when $K$ is the age, the initial trait value is $x=0$ and the age at $s$ of a species born at $t$ is $K_s= s-t$ so that 
\begin{equation}
\label{eqn:FK age}
g(t,s) = d(s, s-t)\ e^{-\int_t^s \, d(r,r-t)dr} \qquad s\ge t.
\end{equation}
\end{rem}

\begin{proof}[Proof of Theorem \ref{thm:generalmodel}]
Let $n\ge 1$ be an integer, and  let $h_1,\ldots, h_{n-1}$ be elements of $(0,T)$. Assume $N_T\ge n$, and condition on $H_i = h_i$ for $i=1,\ldots, n-1$. We are going to prove that the conditional law of $H_n$ is given by 
\begin{equation}
\label{to prove}
\PP(H_n>t) = \exp\left(- \int_{T-t}^ T b(s)\, (1-q(s))\,ds\right)\qquad t\in[0,T],
\end{equation}
which will show that $H_n$ is independent of $H_1, \ldots, H_{n-1}$ and has $W$ as inverse tail distribution. This result yields the theorem by induction. Note that conditonal on $N_T\ge n$, $N_T$ exactly equals $n$ iff $H_n >T$.

Label by $0,1,\ldots, n-1$ the individuals alive  at time $T$ in the order induced by the plane orientation of the tree, where daughters sprout to the right of their mother. In particular, $h_i$ is the coalescence time between individuals $i-1$ and $i$ ($1\le i\le n-1$).

We denote by $k$ the number of generations separating individuals $n-1$ from the progenitor. We let $x_k$ denote the time when individual $n-1$ was born, $x_{k-1}<x_k$ the time when her mother was born, and so on, until 
$x_0=0$ the birth time of the progenitor. By the orientation of the tree, there are (conditionally) deterministic indices $0=i_0  <\cdots <i_k=n-1$, such that $x_j= T-h_{i_j}$ (and $h_v< h_{i_j}$ for all $v\in\{i_{j-1}+1,\ldots, i_j -1\}$), so that conditional on $H_i = h_i$ for $i=1,\ldots, n-1$, the times $x_0,\ldots, x_k$ are deterministic. 

By the orientation of the tree again, apart from the individuals already labelled, individuals alive at $T$ descend from births occurring during one of the time intervals $I_j:=[x_j, x_{j+1})$, where $x_{k+1}:=T$ for convenience. On each of these time intervals, births occur at rate $b(t)$, and so by thinning, successful births, i.e., births with alive descendance at time $T$, occur at rate $b(t)\, (1-q(t))$. But conditional on the $(x_j)$, all the ancestors of individual $n-1$ (including her) independently give birth on their corresponding interval $I_j$. Then if $A$ denotes any subset of $[0,T)$, the number $N(A)$ of successful births occurring during $A$ is the sum 
$$
N(A) = \sum_{j=0}^k N(A\cap I_j),
$$
where the random numbers $N(A\cap I_j)$ are independent. Now $N(A\cap I_j)$ is a Poisson random variable with parameter $\int_{A\cap I_j} b(t)\, (1-q(t))\, dt$. As a consequence, $N(A)$ is a Poisson random variable with parameter
$$
\sum_{j=0}^k \int_{A\cap I_j} b(t)\, (1-q(t))\, dt = \int_{A} b(t)\, (1-q(t))\, dt.
$$
The proof finishes noticing that $H_n>t$ iff $N([T-t, T)) =0$, which occurs with the probability stated in \eqref{to prove}.
\end{proof}
\begin{exo}
When there is no trait dependence of the death rate, the process is merely a time-inhomogeneous birth--death process. Prove that in this case $W$ is given by
$$
W(t) =1+\int_{T-t}^T b(s)\,e^{\int_s^T  \,r(u)du}\,ds,
$$
where $r(t):=b(t)-d(t)$.
In particular, when rates do not depend on time, the process is a linear birth--death process with birth rate $b$ and death rate $d$. The last formula then boils down to
\begin{equation}
\label{eqn:Markovian scale}
W(t)=\begin{cases}
1 + \frac{b}{r}\big(e^{rt}- 1\big) & \text{if } r\not=0 \\
1+b t & \text{if }r=0.
\end{cases}
\end{equation}
\end{exo}
\noindent
Fig \ref{fig:coalescentdensity} shows the density $W'/W^2$ (recall $W(t)=1/\PP(H\ge t)$) of coalescence times in the pure-birth case  ($d=0$) and in the critical case ($b=d$).
\begin{figure}[!ht]
\centering
\includegraphics[width=9cm]{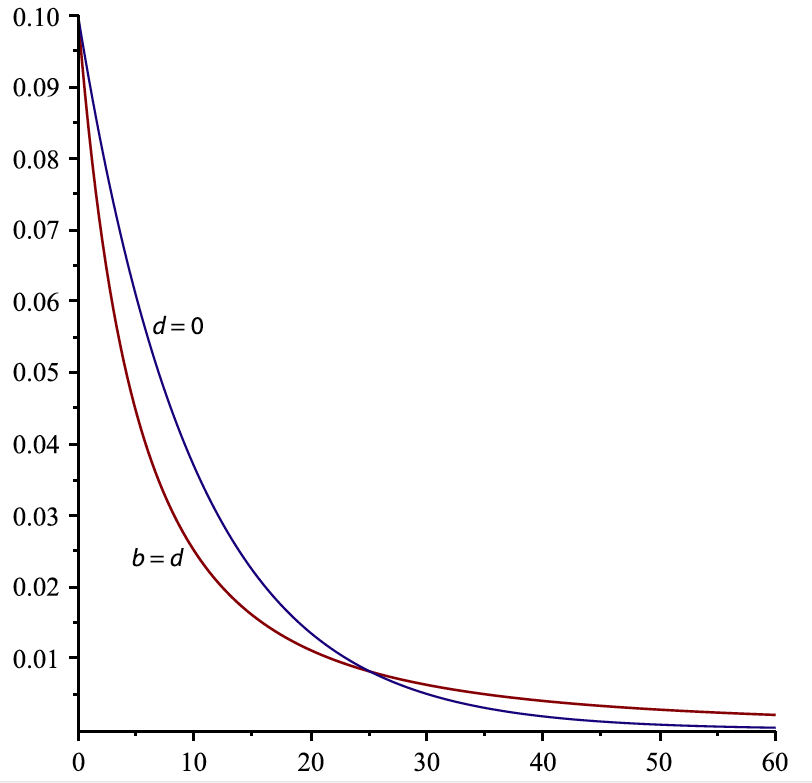}
\caption{The common density function $f=W'/W^2$ of the node depths of the reduced tree for a birth--death process with  constant rates $b$ and $d$. In blue, the pure birth case $f(t)=b\,e^{-bt}$ ($b=0.1$ in the figure); in red, the critical case $f(t) = bt/(1+bt)^2$ ($b=d=0.1$ in the figure). The critical process gives a density with a faster decay initially, but has a heavier tail than for the pure-birth model. This figure is taken from  \citet{LSt13}.}
\label{fig:coalescentdensity}
\end{figure}
\begin{exo}
When there is no time-dependence of the birth and death rates, and the trait is chosen to be the age, we are back to the splitting tree model. Here, $g(t,s)\equiv g(s-t)$ and $\Lambda (dr) = b\,g(r)\, dr$. First prove thanks to \eqref{eqn:hidden convolution} that 
\begin{equation}
\label{eqn:convolution}
W' = b\,\left( W - W\star g\right),
\end{equation}
and then recover that the Laplace transform of $W$ indeed is $1/\psi$ (as in Eq \ref{eqn:LTW}), where here $\psi$ can be written as $\psi(\lambda)=\lambda-b\int_0^\infty\left(1-e^{-\lambda r}\right)\,g(r)\,(dr)$. 

In passing, Eq \eqref{eqn:convolution} offers to compute the pair $(W,W')$ by solving numerically a 2D integro-differential equation (Eq \ref{eqn:convolution} along with $W(t)=1+\int_0^t W'(s)\, ds$) instead of inverting the Laplace transform of $W$, which can be computationally tricky.
\end{exo}
\begin{exo}
Prove that the shape of an ultrametric tree associated with a coalescent point process conditioned to have $n$ tips is always $\Pnerm$.
\end{exo}

\section{Applications}

In this section, we wish to give a taste of some recent applications of contour processes and coalescent point processes in evolutionary biology and in epidemiology. They rely in particular on the property that the density (or likelihood) of a given ultrametric tree $\tr$ with age $T$ and node depths $(h_i)$, seen as the reduced tree of a tree generated under one of the models displayed in the last section, is in product form
$$
\Lc(\tr) = p(T)\,\prod_i f(h_i),
$$
where $p$ and $f$ have to be computed in terms of the model ingredients, in particular $f=W'/W^2$ is the density of a random node depth.

\subsection{Bottlenecks and missing tips}
\label{subsec:bottlenecks}
Let $\tr$ be a rooted $\R$-tree interpreted as the genealogy of some population. We want to model the fact that at some fixed time point $t$, a macroscopic proportion say $p$ of the population, is killed (and its entire descendance as well of course). In population genetics, such events are called \emph{bottlenecks}, whereas in phylogenetics they model \emph{mass extinctions}. When $t=T$ is present time, this procedure is meant to model \emph{incomplete sampling} (or contemporary extinctions, see next section).

There are two reduced trees to consider, the reduced tree \emph{ex ante}, spanned by individuals alive at time $T$ in the absence of bottleneck, and the reduced tree \emph{ex post}, spanned by individuals alive even in the presence of the bottleneck. The second one is of course included in the first one.

Note that the reduced tree \emph{ex post} is not affected by lineages that do not even make it to the present in the absence of bottleneck, so we can equally assume that the bottleneck is only applied to the reduced tree \emph{ex ante}. See Fig \ref{fig:bottlenecks}.

As soon as the reduced tree is compact it is a comb metric space, and if $t<T$ we can model the bottleneck by simply disconnecting each lineage of the reduced tree \emph{ex ante} at distance $t$ from the root, independently with probability $p$. The next statement ensures that this operation preserves the CPP property.

\begin{prop}[\citealt{LS13}]
\label{prop:bottlenecks}
Start with a CPP with inverse tail distribution $W$. Add bottlenecks with survival probabilities $\varepsilon_1,\ldots,\varepsilon_k$ at times $T-s_1>\ldots>T-s_k$ (where $s_1>0$ and $s_k<T$). Then conditional on survival, the reduced tree \emph{ex post} is again a coalescent point process, with inverse tail distribution $W_\varepsilon$ given by
\begin{equation}
\label{eqn:bottlenecks}
W_\varepsilon (t)=
\varepsilon_1\cdots\varepsilon_m\,W(t)+\sum_{j=1}^m (1-\varepsilon_j)\,\varepsilon_1\cdots\varepsilon_{j-1}\,W(s_j)\qquad t\in[s_m, s_{m+1}], 0\le m\le  k,
\end{equation}
where $s_0:=0$ and $s_{k+1}=T$ (empty sum is zero, empty product is 1).
\end{prop}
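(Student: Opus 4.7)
The plan is to reduce the proof to the case of a single bottleneck and then iterate. Since the Bernoulli survival indicators attached to different bottlenecks are independent, the $k$ bottlenecks can be applied one at a time, in any order; I would apply them from deepest to shallowest, i.e.\ at $T-s_k$ first and at $T-s_1$ last. Denote by $\Phi_{s,\varepsilon}$ the map on inverse-tail functions associated with a single bottleneck at depth $s$ with survival $\varepsilon$. Step 2 below will establish that $\Phi_{s,\varepsilon}(W)(t)=W(t)$ for $t\le s$ and $\Phi_{s,\varepsilon}(W)(t)=\varepsilon W(t)+(1-\varepsilon)W(s)$ for $t>s$, and in particular that the ex-post tree is again a CPP (so the next bottleneck can be applied). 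A direct induction on $k$, using that $s_{l-1}<s_l$ implies $\Phi_{s_l,\varepsilon_l}$ fixes values at $t\le s_{l-1}$, would then show that $\Phi_{s_1,\varepsilon_1}\circ\cdots\circ\Phi_{s_k,\varepsilon_k}(W)$ coincides with the right-hand side of \eqref{eqn:bottlenecks} on each slab $[s_m,s_{m+1}]$.

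For the single-bottleneck case at depth $s=s_1$ with survival $\varepsilon=\varepsilon_1$, I would exploit a class decomposition of the original CPP: two original lineages sit on the same branch at time $T-s$ iff all pairwise depths between them are $\le s$, so the lineages partition into consecutive \emph{classes}. By the i.i.d.\ structure of $(H_i)$, each class has size $K$ with $K-1\sim\mathrm{Geom}(1/W(s))$, the within-class depths are i.i.d.\ copies of $H$ conditioned on $\{H\le s\}$, the between-class gaps are i.i.d.\ copies of $H$ conditioned on $\{H>s\}$, and each class carries an independent Bernoulli $\eta\sim\mathrm{Bern}(\varepsilon)$ that wipes out all its lineages when $\eta=0$; the CPP ends at the first between-class gap exceeding $T$. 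The leftmost surviving lineage is automatically at position $1$ of a surviving class, so I would compute the law of the next depth $\tilde H$ by splitting on $\{K\ge 2\}$ (where $\tilde H$ is a within-class depth, hence $\le s$) versus $\{K=1\}$ (where we must cross a $\mathrm{Geom}(\varepsilon)$ number of non-surviving classes before finding a surviving one, and $\tilde H$ is the maximum of the traversed gaps). Summing two geometric series then gives $\PP(\tilde H>t)=1/W(t)$ for $t\le s$ and $\PP(\tilde H>t)=1/[\varepsilon W(t)+(1-\varepsilon)W(s)]$ for $t>s$, which is the $k=1$ instance of \eqref{eqn:bottlenecks}.

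The main obstacle is to upgrade this marginal computation to full independence of the successive depths $\tilde H_r$, which is required for the ex-post tree to be a genuine CPP. The subtlety is that two consecutive surviving lineages can either sit at consecutive positions inside the same class or be separated by one or more non-surviving classes, so the ``state'' of a left-to-right exploration (its position inside the current class, the remaining class structure to the right) evolves in a non-trivial way. The memoryless property of the geometric saves the day: conditional on sitting at position $j$ of a class of size $K\ge j$, the residual number $K-j$ is again $\mathrm{Geom}(1/W(s))$ with the same parameter, so the forward distribution of the ex-post CPP does not depend on $j$. Combined with the independence of the between-class gaps and of the Bernoullis $\eta$, this yields that each $\tilde H_{r+1}$ is independent of $(\tilde H_1,\ldots,\tilde H_r)$ with the law already computed. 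Together with the iteration of the first paragraph, this establishes \eqref{eqn:bottlenecks}.
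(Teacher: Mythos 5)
Your proof is correct and follows essentially the same route as the paper's: both reduce to a single bottleneck and iterate, and both derive the single-bottleneck law by splitting on whether the next surviving tip lies in the same class (giving a copy of $H$ conditioned on $\{H\le s\}$) or is reached after crossing a geometric number of between-class gaps (giving a max of i.i.d.\ copies of $H$ conditioned on $\{H>s\}$), yielding the same two-case formula for $W_\varepsilon$. You are a bit more explicit than the paper on two points it treats tersely — that one should iterate deepest bottleneck first so later maps do not perturb earlier slabs, and that the independence of the successive ex-post depths follows from the memoryless property of the geometric class sizes — but this is elaboration of the same argument, not a different one.
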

\noindent
In the case of a finite number of tips, this formula can also include sampling with probability $p$ by adding a bottleneck with $s_0 =0$ and $\varepsilon_0=p$, resulting in 
$$
W_\varepsilon (t)=
\varepsilon_0\cdots\varepsilon_m\,W(t)+\sum_{j=0}^m (1-\varepsilon_j)\,\varepsilon_0\cdots\varepsilon_{j-1}\,W(s_j)\qquad t\in[s_m, s_{m+1}], 0\le m\le  k,
$$
which boils down to $W_\varepsilon=1-p+ pW$ when $k=0$ (since $W(0)=1$).

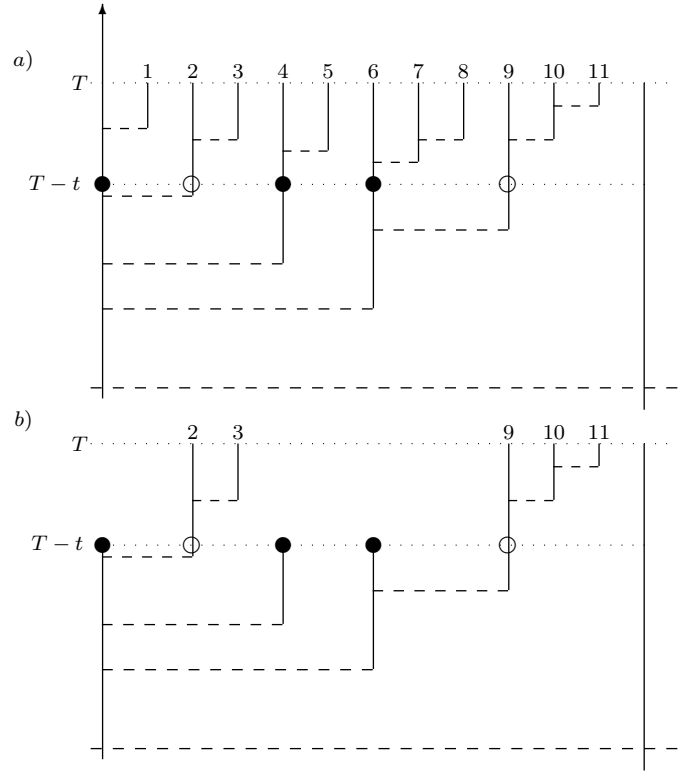
\begin{figure}[ht]
\centering
\unitlength 1.5mm 
\linethickness{0.5pt}
\ifx\plotpoint\undefined\newsavebox{\plotpoint}\fi 
\begin{picture}(58,78)(0,5)
\put(7,38){\vector(0,1){35}}
\put(5,34){\makebox(0,0)[cc]{\scriptsize $T$}}
\put(5,66){\makebox(0,0)[cc]{\scriptsize $T$}}
\put(0,36){\makebox(0,0)[cc]{\scriptsize $b)$}}
\put(0,68){\makebox(0,0)[cc]{\scriptsize $a)$}}

\put(22.941,17.941){\line(-1,0){.9412}}
\put(21.059,17.941){\line(-1,0){.9412}}
\put(19.177,17.941){\line(-1,0){.9412}}
\put(17.294,17.941){\line(-1,0){.9412}}
\put(15.412,17.941){\line(-1,0){.9412}}
\put(13.53,17.941){\line(-1,0){.9412}}
\put(11.647,17.941){\line(-1,0){.9412}}
\put(9.765,17.941){\line(-1,0){.9412}}
\put(7.883,17.941){\line(-1,0){.9412}}
\put(22.941,49.941){\line(-1,0){.9412}}
\put(21.059,49.941){\line(-1,0){.9412}}
\put(19.177,49.941){\line(-1,0){.9412}}
\put(17.294,49.941){\line(-1,0){.9412}}
\put(15.412,49.941){\line(-1,0){.9412}}
\put(13.53,49.941){\line(-1,0){.9412}}
\put(11.647,49.941){\line(-1,0){.9412}}
\put(9.765,49.941){\line(-1,0){.9412}}
\put(7.883,49.941){\line(-1,0){.9412}}
\put(15,24){\line(0,1){10}}
\put(15,56){\line(0,1){10}}
\put(23,50){\line(0,1){16}}
\put(39,61){\line(0,1){5}}
\put(55,34){\line(0,-1){29}}
\put(55,66){\line(0,-1){29}}
\put(11,66){\line(0,-1){4}}
\put(19,34){\line(0,-1){5}}
\put(19,66){\line(0,-1){5}}
\put(27,66){\line(0,-1){6}}
\put(31,66){\line(0,-1){20}}
\put(35,66){\line(0,-1){7}}
\put(43,34){\line(0,-1){13}}
\put(43,66){\line(0,-1){13}}
\put(47,34){\line(0,-1){5}}
\put(47,66){\line(0,-1){5}}
\put(51,34){\line(0,-1){2}}
\put(51,66){\line(0,-1){2}}
\put(10.941,61.941){\line(-1,0){.8}}
\put(9.341,61.941){\line(-1,0){.8}}
\put(7.741,61.941){\line(-1,0){.8}}
\put(14.941,23.941){\line(-1,0){.8889}}
\put(13.164,23.941){\line(-1,0){.8889}}
\put(11.386,23.941){\line(-1,0){.8889}}
\put(9.608,23.941){\line(-1,0){.8889}}
\put(7.83,23.941){\line(-1,0){.8889}}
\put(14.941,55.941){\line(-1,0){.8889}}
\put(13.164,55.941){\line(-1,0){.8889}}
\put(11.386,55.941){\line(-1,0){.8889}}
\put(9.608,55.941){\line(-1,0){.8889}}
\put(7.83,55.941){\line(-1,0){.8889}}
\put(18.941,28.941){\line(-1,0){.8}}
\put(17.341,28.941){\line(-1,0){.8}}
\put(15.741,28.941){\line(-1,0){.8}}
\put(18.941,60.941){\line(-1,0){.8}}
\put(17.341,60.941){\line(-1,0){.8}}
\put(15.741,60.941){\line(-1,0){.8}}
\put(26.941,59.941){\line(-1,0){.8}}
\put(25.341,59.941){\line(-1,0){.8}}
\put(23.741,59.941){\line(-1,0){.8}}
\put(30.941,13.941){\line(-1,0){.96}}
\put(29.021,13.941){\line(-1,0){.96}}
\put(27.101,13.941){\line(-1,0){.96}}
\put(25.181,13.941){\line(-1,0){.96}}
\put(23.261,13.941){\line(-1,0){.96}}
\put(21.341,13.941){\line(-1,0){.96}}
\put(19.421,13.941){\line(-1,0){.96}}
\put(17.501,13.941){\line(-1,0){.96}}
\put(15.581,13.941){\line(-1,0){.96}}
\put(13.661,13.941){\line(-1,0){.96}}
\put(11.741,13.941){\line(-1,0){.96}}
\put(9.821,13.941){\line(-1,0){.96}}
\put(7.901,13.941){\line(-1,0){.96}}
\put(30.941,45.941){\line(-1,0){.96}}
\put(29.021,45.941){\line(-1,0){.96}}
\put(27.101,45.941){\line(-1,0){.96}}
\put(25.181,45.941){\line(-1,0){.96}}
\put(23.261,45.941){\line(-1,0){.96}}
\put(21.341,45.941){\line(-1,0){.96}}
\put(19.421,45.941){\line(-1,0){.96}}
\put(17.501,45.941){\line(-1,0){.96}}
\put(15.581,45.941){\line(-1,0){.96}}
\put(13.661,45.941){\line(-1,0){.96}}
\put(11.741,45.941){\line(-1,0){.96}}
\put(9.821,45.941){\line(-1,0){.96}}
\put(7.901,45.941){\line(-1,0){.96}}
\put(34.941,58.941){\line(-1,0){.8}}
\put(33.341,58.941){\line(-1,0){.8}}
\put(31.741,58.941){\line(-1,0){.8}}
\put(38.941,60.941){\line(-1,0){.8}}
\put(37.341,60.941){\line(-1,0){.8}}
\put(35.741,60.941){\line(-1,0){.8}}
\put(42.941,20.941){\line(-1,0){.9231}}
\put(41.095,20.941){\line(-1,0){.9231}}
\put(39.249,20.941){\line(-1,0){.9231}}
\put(37.403,20.941){\line(-1,0){.9231}}
\put(35.557,20.941){\line(-1,0){.9231}}
\put(33.711,20.941){\line(-1,0){.9231}}
\put(31.865,20.941){\line(-1,0){.9231}}
\put(42.941,52.941){\line(-1,0){.9231}}
\put(41.095,52.941){\line(-1,0){.9231}}
\put(39.249,52.941){\line(-1,0){.9231}}
\put(37.403,52.941){\line(-1,0){.9231}}
\put(35.557,52.941){\line(-1,0){.9231}}
\put(33.711,52.941){\line(-1,0){.9231}}
\put(31.865,52.941){\line(-1,0){.9231}}
\put(46.941,28.941){\line(-1,0){.8}}
\put(45.341,28.941){\line(-1,0){.8}}
\put(43.741,28.941){\line(-1,0){.8}}
\put(46.941,60.941){\line(-1,0){.8}}
\put(45.341,60.941){\line(-1,0){.8}}
\put(43.741,60.941){\line(-1,0){.8}}
\put(50.941,31.941){\line(-1,0){.8}}
\put(49.341,31.941){\line(-1,0){.8}}
\put(47.741,31.941){\line(-1,0){.8}}
\put(50.941,63.941){\line(-1,0){.8}}
\put(49.341,63.941){\line(-1,0){.8}}
\put(47.741,63.941){\line(-1,0){.8}}
\put(7,25){\circle*{1.414}}
\put(7,57){\circle*{1.414}}
\put(23,25){\circle*{1.414}}
\put(23,57){\circle*{1.414}}
\put(31,25){\circle*{1.414}}
\put(31,57){\circle*{1.414}}
\put(14.875,25){\circle{1.414}}
\put(14.875,57){\circle{1.414}}
\put(42.875,25){\circle{1.414}}
\put(42.875,57){\circle{1.414}}
\put(11,67){\makebox(0,0)[cc]{\scriptsize $1$}}
\put(15,35){\makebox(0,0)[cc]{\scriptsize $2$}}
\put(15,67){\makebox(0,0)[cc]{\scriptsize $2$}}
\put(19,35){\makebox(0,0)[cc]{\scriptsize $3$}}
\put(19,67){\makebox(0,0)[cc]{\scriptsize $3$}}
\put(23,67){\makebox(0,0)[cc]{\scriptsize $4$}}
\put(27,67){\makebox(0,0)[cc]{\scriptsize $5$}}
\put(31,67){\makebox(0,0)[cc]{\scriptsize $6$}}
\put(35,67){\makebox(0,0)[cc]{\scriptsize $7$}}
\put(39,67){\makebox(0,0)[cc]{\scriptsize $8$}}
\put(43,35){\makebox(0,0)[cc]{\scriptsize $9$}}
\put(43,67){\makebox(0,0)[cc]{\scriptsize $9$}}
\put(47,35){\makebox(0,0)[cc]{\scriptsize $10$}}
\put(47,67){\makebox(0,0)[cc]{\scriptsize $10$}}
\put(51,35){\makebox(0,0)[cc]{\scriptsize $11$}}
\put(51,67){\makebox(0,0)[cc]{\scriptsize $11$}}
\put(5.941,6.941){\line(1,0){.9811}}
\put(7.904,6.941){\line(1,0){.9811}}
\put(9.866,6.941){\line(1,0){.9811}}
\put(11.828,6.941){\line(1,0){.9811}}
\put(13.79,6.941){\line(1,0){.9811}}
\put(15.753,6.941){\line(1,0){.9811}}
\put(17.715,6.941){\line(1,0){.9811}}
\put(19.677,6.941){\line(1,0){.9811}}
\put(21.64,6.941){\line(1,0){.9811}}
\put(23.602,6.941){\line(1,0){.9811}}
\put(25.564,6.941){\line(1,0){.9811}}
\put(27.526,6.941){\line(1,0){.9811}}
\put(29.489,6.941){\line(1,0){.9811}}
\put(31.451,6.941){\line(1,0){.9811}}
\put(33.413,6.941){\line(1,0){.9811}}
\put(35.375,6.941){\line(1,0){.9811}}
\put(37.338,6.941){\line(1,0){.9811}}
\put(39.3,6.941){\line(1,0){.9811}}
\put(41.262,6.941){\line(1,0){.9811}}
\put(43.224,6.941){\line(1,0){.9811}}
\put(45.187,6.941){\line(1,0){.9811}}
\put(47.149,6.941){\line(1,0){.9811}}
\put(49.111,6.941){\line(1,0){.9811}}
\put(51.073,6.941){\line(1,0){.9811}}
\put(53.036,6.941){\line(1,0){.9811}}
\put(54.998,6.941){\line(1,0){.9811}}
\put(56.96,6.941){\line(1,0){.9811}}
\put(5.941,38.941){\line(1,0){.9811}}
\put(7.904,38.941){\line(1,0){.9811}}
\put(9.866,38.941){\line(1,0){.9811}}
\put(11.828,38.941){\line(1,0){.9811}}
\put(13.79,38.941){\line(1,0){.9811}}
\put(15.753,38.941){\line(1,0){.9811}}
\put(17.715,38.941){\line(1,0){.9811}}
\put(19.677,38.941){\line(1,0){.9811}}
\put(21.64,38.941){\line(1,0){.9811}}
\put(23.602,38.941){\line(1,0){.9811}}
\put(25.564,38.941){\line(1,0){.9811}}
\put(27.526,38.941){\line(1,0){.9811}}
\put(29.489,38.941){\line(1,0){.9811}}
\put(31.451,38.941){\line(1,0){.9811}}
\put(33.413,38.941){\line(1,0){.9811}}
\put(35.375,38.941){\line(1,0){.9811}}
\put(37.338,38.941){\line(1,0){.9811}}
\put(39.3,38.941){\line(1,0){.9811}}
\put(41.262,38.941){\line(1,0){.9811}}
\put(43.224,38.941){\line(1,0){.9811}}
\put(45.187,38.941){\line(1,0){.9811}}
\put(47.149,38.941){\line(1,0){.9811}}
\put(49.111,38.941){\line(1,0){.9811}}
\put(51.073,38.941){\line(1,0){.9811}}
\put(53.036,38.941){\line(1,0){.9811}}
\put(54.998,38.941){\line(1,0){.9811}}
\put(56.96,38.941){\line(1,0){.9811}}
\multiput(5.941,33.941)(.980769,0){53}{{\rule{.4pt}{.4pt}}}
\multiput(5.941,65.941)(.980769,0){53}{{\rule{.4pt}{.4pt}}}
\put(7,6){\line(0,1){19}}
\put(23,18){\line(0,1){7}}
\put(31,14){\line(0,1){11}}
\put(2.75,57){\makebox(0,0)[cc]{\scriptsize $T-t$}}
\put(3,25.125){\makebox(0,0)[cc]{\scriptsize $T-t$}}
\multiput(6.941,56.941)(.979592,0){50}{{\rule{.4pt}{.4pt}}}
\multiput(6.941,24.941)(.979592,0){50}{{\rule{.4pt}{.4pt}}}
\end{picture}
\caption{Bottleneck at time point $T-t$, black dots disconnect lineages. a) Coalescent point process \emph{ex ante}, in the absence of bottleneck; b) Reduced tree \emph{ex post}, after passage of the bottleneck.}
\label{fig:bottlenecks}
\end{figure}

\begin{proof}
We characterize the effect of one bottleneck on a CPP with finitely many individuals at height $T$. 

Assume $k=1$ and $s_1\in(0,T)$. Recall that a CPP is defined thanks to a sequence of independent, identically distributed random variables $(H_i)$. We will see that the tree obtained after thinning is still a coalescent point process, defined from independent random variables, say $(B_i)$. Let $(e_i)$ be the i.i.d. Bernoulli random variables defined by $e_i=1$ if lineage $i$ survives the bottleneck (this has a meaning only if $H_i \ge s_1$; it happens with probability $\varepsilon_1$). By the orientation of the tree, a tip terminating a pendant edge with depth smaller than $s_1$ is kept alive iff the rightmost pending edge on its left with depth larger than $s_1$ survives. As a consequence, if $H_1<s_1$, then the first lineage is trivially still alive and its coalescence time with the left-hand ancestral lineage is $B:=H_1$. Otherwise, define $1=J_1< J_2<\cdots$ the indices of consecutive edges with depths larger than $s_1$. Then the first lineage kept alive after thinning is the least $J_m$ such that $e_{J_m}=1$, and its coalescence time with the ancestral lineage is $B:=\max(H_{J_1},\ldots, H_{J_m})$. By the independence property of coalescent point processes and by the independence of the Bernoulli random variables $(e_i)$, the new genealogy is obtained by a sequence of independent random variables $(B_i)$ all distributed as $B$. 

Let us specify the law of $B$. First, with probability $P(H< s_1)$, $P(B\in \cdot) =P(H\in \cdot \mid H< s_1)$. Second, with probability $P(H\ge s_1)$
$$
B\stackrel{(d)}{=}\max\{A_1,\ldots, A_M\},
$$
where the $A_i$'s are i.i.d. distributed as $H$ conditional on $H\ge s_1$ and $M$ is an independent (modified) geometric random variable, that is, $P(M=j)=\varepsilon_1(1-\varepsilon_1)^{j-1}$. Then for any $s\ge s_1$
$$
\frac{1}{P(B\ge s)}=\frac{1-\varepsilon_1}{P(H \ge s_1)} + \frac{\varepsilon_1}{P(H\ge s)}\qquad s\ge s_1. 
$$
Then if $W_\varepsilon$ denotes the inverse tail distribution of $B$, i.e., $W_\varepsilon(s):= 1/P(B\ge s)$, we have
$$
W_\varepsilon (s) = 
\begin{cases}
W(s) &\text{if } 0\le s\le s_1\\
(1-\varepsilon_1)W(s_1)+\varepsilon_1 W(s)&\text{if }s_1\le s \le t, 
\end{cases}
$$
where $W$ is the inverse tail distribution of $H$.
Iterating this procedure yields the result in Proposition \ref{prop:bottlenecks}.\end{proof}

\subsection{Loss of phylogenetic diversity}
In the context of the contemporary crisis of biodiversity, conservation biologists have proposed to quantify the loss of evolutionary heritage by the sum of branch lengths that disappear from the Tree of Life as new extinctions occur, that is, evolutionary heritage of a clade is quantified by the sum of its branch lengths, called \emph{phylogenetic diversity} (PD).   
Then a natural question to ask is the following. If a random, say 10\% of species from some given clade were to disappear in the next 100 years due to current high rates of extinction, how much evolutionary heritage would be lost?

`Not so much', asserted \citet{NM97} in a very much debated paper, where the tree of life was modeled by Kingman coalescent. `A lot more', replied \citet{MGS12}, in a paper where the tree of life was modeled by a Yule tree.

In \citet{LSt13}, we generalized their calculations to the case of a splitting tree with age $T$ and typical node depth $H$, where tips (contemporary species) are eliminated independently with probability $1-p$ (`field of bullets' model).
Let $G$ denote a geometric random variable with success probability $p$, let $(A_i)$ be a sequence of independent copies of $H$ conditioned on $H\le T$ and set
$$
B:=\max_{i=1,\ldots, G}A_i,
$$
that is $B$ is the typical node depth of the tree after passage of the field of bullets (see previous paragraph on bottlenecks). 

Conditional on the number of tips $n$ of the tree \emph{ex ante} and on the number $k_n$ of the tree \emph{ex post}, as $n\to\infty$ and $k_n/n\to p$, elementary SLLN-type arguments show that the ratio of the remaining PD to the old PD converges a.s. to
$$
\pi_T(p) = p\, \frac{\EE(B)}{\EE(A)}
$$
The ratio of remaining to old PD is always above the identity, corresponding to the case when the tree is star-like. This obviously holds also for $\pi_T$. In addition, it is not difficult to see that $\pi_T$ is always a concave, increasing function such that $\pi_T(0)=0$ and $\pi_T(1)=1$. 

\begin{figure}[!ht]
\centering
\includegraphics[width=10cm]{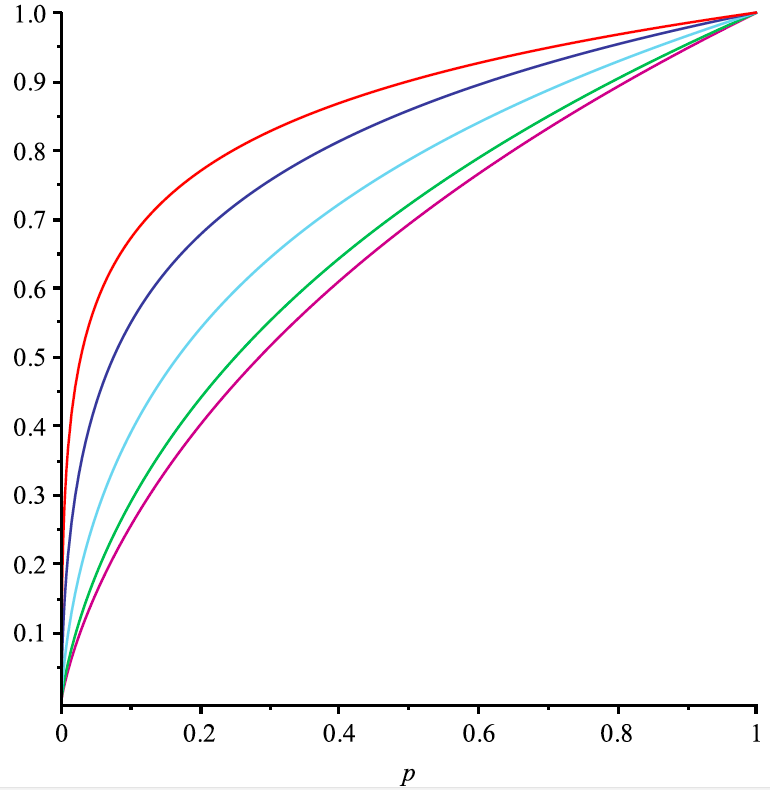}
\caption{Remaining fraction $\pi_\infty$ of phylogenetic diversity (PD) as a function of the probability $p$ of species survival to the mass extinction, for a constant-rate birth--death process. Observe the slow progression towards the unit step function (from pure birth to critical): $d/b=0$ (the lowest curve) and then $d/b=0.5,
0.9, 0.99, 0.999$. This figure is taken from  \citet{LSt13}.}
\label{fig:pd}
\end{figure}
\noindent
If we take $T=\infty$, we get 
$$
\pi_\infty(p) = p\, \frac{\intgen\frac{dt}{1-p+pW(t)}}{\intgen\frac{dt}{W(t)}}
$$
In the case of a birth--death tree with rates $b$ and $d$, with $r:=b-d>0$, we get
$$
\pi_\infty(p)=
\begin{cases}
\frac{dp}{bp-r}\frac{\ln(bp/r)}{\ln (b/r)}, & \mbox{ if }  b>r\not= bp;\\
 -\frac{p\ln (p)}{1-p}, &  \mbox{ if }  b=r> bp;\\
-\frac{1-p}{\ln (p)}, & \mbox{ if }  b>r= bp.  
\end{cases}
$$
\noindent
Fig \ref{fig:pd} shows the graph of $\pi_\infty$ for a range of birth and death rates $b>d$. Note that the more concave the better for evolutionary heritage. Also the larger $d/b<1$, the larger the remaining fraction of phylogenetic diversity, converging, but very slowly, to 1 as $d/b\to1$. 

\subsection{Do species age?}

In \citet{LAS14}, we have developed a framework to test the assumption that the extinction rate of a species remains constant all the way through its lifetime. Specifically, we have applied a maximal likelihood procedure to the recently published bird phylogeny (\citealt{JTJ12}), to infer the lifetime distribution of bird species, assuming they are Gamma distributed. We tested the assumption that the shape parameter $k$ of the Gamma r.v. equals 1 (exponential lifetimes, i.e., age-independent extinction rate) vs $k\not=1$ (age-dependence). This study generalizes previous works on the inference of diversification processes from reconstructed phylogenies (e.g., \citealt{NMH94, Nee06, Sta11}).

Our estimate of $k$ is much larger than 1, indicating that the extinction rate is not constant but increases with age. For the record, our estimate of the speciation rate is $0.11\ My^{-1}$ and our estimate of the mean species lifetime is $15\ My$.

\subsection{How long does speciation take?}
In most models of diversification, like the previous one, species are seen as particles that split instantaneously into two daughter species upon speciation. It is obvious that on the contrary speciation takes time, but the last assumption would still be relevant if the time speciation takes was negligible compared to a species lifetime. There is evidence that this is not the case, and some authors have recently proposed an alternative model of diversification, called \emph{protracted speciation}, meant to take this effect into account.

In this model, a species is described as an ensemble of populations, and as time passes, these populations diverge (genetically) gradually from each other. To not have to record all (phylo)genetic distances between all  populations composing each species, \cite{ER12} have proposed a model where each population passes through $k$ different stages of maturation, after which it becomes a so-called good species. This model produces quite realistic patterns in terms of phylogenetic balance and branching tempo, but an inference framework was missing. 

Since speciation stage is a non-heritable trait, Theorem \ref{thm:generalmodel} ensures that if speciation rate does not depend on speciation stage, the phylogeny produced by this model is a CPP. In \cite{LME14}, we have characterized the common distribution of node depths in this CPP.

In \citet{EML14} we have developed an inference framework that we tested against 46 bird clades. Individual parameters are difficult to infer, but the method is relatively good at inferring a composite parameter of interest, the duration of speciation. The duration of speciation is defined as the time it takes for a novel population to get a good species in its descendance.
The results are shown in Fig \ref{fig:protractedinf}.
 
\begin{figure}[!ht]
\centering
\includegraphics[width=8cm]{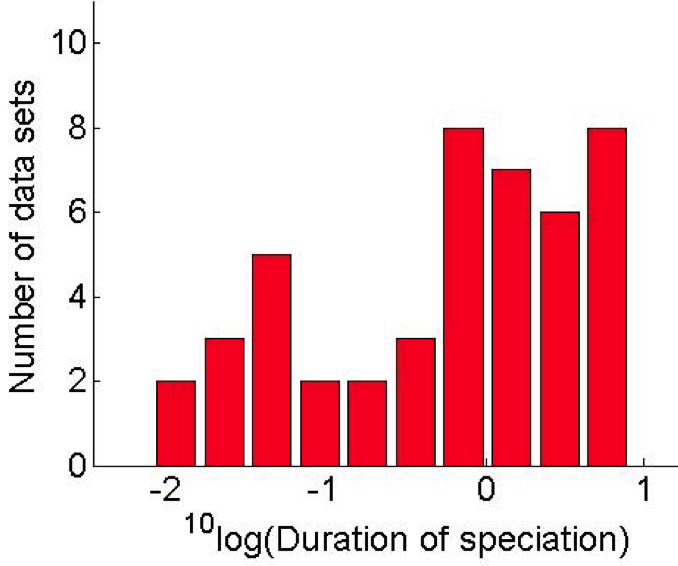}
\caption{Duration of speciation (in My). Distribution inferred from 46 bird clades, ranging from 10,000 years to 10 My. This figure is taken from \citet{EML14}.}
\label{fig:protractedinf}
\end{figure}

\subsection{Trees with random marks}

In this section, trees are endowed with marks to model sampling, detection or mutation. 

\subsubsection{The phylogeny of pathogens}

In this paragraph, we focus on a population of patients carrying (or not) an infectious disease. We stick to the framework of splitting trees, with the interpretation that a birth is a transmission event and a death is either a real death or the end of the infectious period (exit from the infective population). The branching property assumption is justified in the case of a well-mixed population where susceptibles are always in excess.

Our setting where lifetime distributions are not exponential is most attractive for diseases like flu or HIV, where the infectious period is known to be deterministic (flu) or heavy-tailed (HIV). 

In addition, we assume that patients are detected to be infective after some random time $D$, at which they are tagged by a mark (see figures). Upon detection (if $D$ is smaller than the infectious period/lifetime), the patients are assumed to exit the infective population, for example because they change their behaviour to avoid transmission or because they are treated and become non-infectious.

In \citet{LAS14}, we have considered the case where the transmission tree spanned by detected patients has been reconstructed, similarly as \citet{Sta10}. Actually, the data is not directly the transmission tree but the phylogenetic tree of the pathogens carried by patients, reconstructed thanks to biological samples taken from the patients upon detection. Fig \ref{fig:markedjcp} shows the oriented tree of the epidemic, with black dots showing detection events, along with its reduced tree and contour process. Patients can be labelled in the plane order, so we can define $S_i$ the detection time of patient $i$ and $R_i$ the coalescence time between patients $i-1$ and $i$.

By considering the jumping contour process of the epidemics, we have shown that the sequence $(S_i, R_i)$ is a killed Markov chain. The likelihood of a tree $\tr$ with coalescence times $(x_i)_{2\le i\le n}$ and detection times $(y_i)_{1\le i\le n}$ can be written in the form
$$
\Lc(\tr) =  g(y_1)\,k(y_n)\,\prod_{i=2}^n f(y_{i-1}; x_i, y_i),
$$
where $g$, $k$ and $f$ can be semi-explicitly expressed in terms of the model ingredients (law of infectious lifetimes, transmission rate, detection rate).
\begin{figure*}[ht]
\unitlength 2mm 
\linethickness{0.2pt}
\ifx\plotpoint\undefined\newsavebox{\plotpoint}\fi 
\begin{picture}(73,28)(0,0)
\put(9,15){\line(0,1){6}}
\put(13,19){\line(0,1){8}}
\put(17,12){\line(0,1){5}}
\put(21,15){\line(0,1){5}}
\put(25,17){\line(0,1){11}}
\put(29,10){\line(0,1){8}}
\put(33,16){\line(0,1){9}}
\put(41,14){\line(0,1){5}}
\put(45,16){\line(0,1){4}}
\put(17,17){\circle*{1.061}}
\put(9,21){\circle*{1.061}}
\put(54,21){\circle*{1.061}}
\put(62,17){\circle*{1.061}}
\put(70,20){\circle*{1.061}}
\put(37,20){\circle*{1.061}}
\put(8.982,14.982){\line(-1,0){.8}}
\put(7.382,14.982){\line(-1,0){.8}}
\put(5.782,14.982){\line(-1,0){.8}}
\put(12.982,18.982){\line(-1,0){.8}}
\put(11.382,18.982){\line(-1,0){.8}}
\put(9.782,18.982){\line(-1,0){.8}}
\put(16.982,11.982){\line(-1,0){.9231}}
\put(15.136,11.982){\line(-1,0){.9231}}
\put(13.29,11.982){\line(-1,0){.9231}}
\put(11.444,11.982){\line(-1,0){.9231}}
\put(9.598,11.982){\line(-1,0){.9231}}
\put(7.752,11.982){\line(-1,0){.9231}}
\put(5.906,11.982){\line(-1,0){.9231}}
\put(20.982,14.982){\line(-1,0){.8}}
\put(19.382,14.982){\line(-1,0){.8}}
\put(17.782,14.982){\line(-1,0){.8}}
\put(24.982,16.982){\line(-1,0){.8}}
\put(23.382,16.982){\line(-1,0){.8}}
\put(21.782,16.982){\line(-1,0){.8}}
\put(28.982,9.982){\line(-1,0){.96}}
\put(27.062,9.982){\line(-1,0){.96}}
\put(25.142,9.982){\line(-1,0){.96}}
\put(23.222,9.982){\line(-1,0){.96}}
\put(21.302,9.982){\line(-1,0){.96}}
\put(19.382,9.982){\line(-1,0){.96}}
\put(17.462,9.982){\line(-1,0){.96}}
\put(15.542,9.982){\line(-1,0){.96}}
\put(13.622,9.982){\line(-1,0){.96}}
\put(11.702,9.982){\line(-1,0){.96}}
\put(9.782,9.982){\line(-1,0){.96}}
\put(7.862,9.982){\line(-1,0){.96}}
\put(5.942,9.982){\line(-1,0){.96}}
\put(32.982,15.982){\line(-1,0){.8}}
\put(31.382,15.982){\line(-1,0){.8}}
\put(29.782,15.982){\line(-1,0){.8}}
\put(36.982,11.982){\line(-1,0){.8889}}
\put(35.205,11.982){\line(-1,0){.8889}}
\put(33.427,11.982){\line(-1,0){.8889}}
\put(31.649,11.982){\line(-1,0){.8889}}
\put(29.871,11.982){\line(-1,0){.8889}}
\put(40.982,13.982){\line(-1,0){.8}}
\put(39.382,13.982){\line(-1,0){.8}}
\put(37.782,13.982){\line(-1,0){.8}}
\put(44.982,15.982){\line(-1,0){.8}}
\put(43.382,15.982){\line(-1,0){.8}}
\put(41.782,15.982){\line(-1,0){.8}}
\put(3,23.375){\makebox(0,0)[cc]{\scriptsize $t$}}
\put(2.875,3.375){\makebox(0,0)[cc]{\scriptsize $0$}}
\put(51.875,3.375){\makebox(0,0)[cc]{\scriptsize $0$}}
\put(7,21){\makebox(0,0)[cc]{\scriptsize $1$}}
\put(52.5,21){\makebox(0,0)[cc]{\scriptsize $1$}}
\put(15,17){\makebox(0,0)[cc]{\scriptsize $2$}}
\put(60.5,17){\makebox(0,0)[cc]{\scriptsize $2$}}
\put(36,21){\makebox(0,0)[cc]{\scriptsize $3$}}
\put(68.625,21.125){\makebox(0,0)[cc]{\scriptsize $3$}}
\put(37,12){\line(0,1){8}}
\put(5,18){\line(0,-1){15}}
\put(10,3){\vector(0,-1){.035}}\put(10,21){\vector(0,1){.035}}\put(10,21){\line(0,-1){18}}
\put(55.5,3){\vector(0,-1){.035}}\put(55.5,21){\vector(0,1){.035}}\put(55.5,21){\line(0,-1){18}}
\put(18,3){\vector(0,-1){.035}}\put(18,17){\vector(0,1){.035}}\put(18,17){\line(0,-1){14}}
\put(63.5,3){\vector(0,-1){.035}}\put(63.5,17){\vector(0,1){.035}}\put(63.5,17){\line(0,-1){14}}
\put(38,3){\vector(0,-1){.07}}
\put(71.5,3){\vector(0,-1){.07}}
\put(38,20){\vector(0,1){.07}}
\put(71.5,20){\vector(0,1){.07}}
\put(38,20){\line(0,-1){17}}
\put(71.5,20){\line(0,-1){17}}
\put(11,15){\makebox(0,0)[cc]{\scriptsize $S_1$}}
\put(56.5,15){\makebox(0,0)[cc]{\scriptsize $S_1$}}
\put(19.125,12){\makebox(0,0)[cc]{\scriptsize $S_2$}}
\put(64.625,12){\makebox(0,0)[cc]{\scriptsize $S_2$}}
\put(39,11){\makebox(0,0)[cc]{\scriptsize $S_3$}}
\put(72.5,11){\makebox(0,0)[cc]{\scriptsize $S_3$}}
\put(12,3){\vector(0,-1){.035}}\put(12,12){\vector(0,1){.035}}\put(12,12){\line(0,-1){9}}
\put(59,3){\vector(0,-1){.035}}\put(59,12){\vector(0,1){.035}}\put(59,12){\line(0,-1){9}}
\put(20,3){\vector(0,-1){.035}}\put(20,10){\vector(0,1){.035}}\put(20,10){\line(0,-1){7}}
\put(67,3){\vector(0,-1){.035}}\put(67,10){\vector(0,1){.035}}\put(67,10){\line(0,-1){7}}
\put(13.375,7.375){\makebox(0,0)[cc]{\scriptsize $R_2$}}
\put(60.375,7.375){\makebox(0,0)[cc]{\scriptsize $R_2$}}
\put(21.125,6.75){\makebox(0,0)[cc]{\scriptsize $R_3$}}
\put(68.125,6.75){\makebox(0,0)[cc]{\scriptsize $R_3$}}
\put(4,3){\line(1,0){42}}
\multiput(3.982,22.982)(.975,0){41}{{\rule{.2pt}{.2pt}}}
\put(54,3){\line(0,1){18}}
\put(62,12){\line(0,1){5}}
\put(70,10){\line(0,1){10}}
\put(19.125,26){\makebox(0,0)[cc]{\scriptsize a)}}
\put(62,24){\makebox(0,0)[cc]{\scriptsize b)}}
\put(61.982,11.982){\line(-1,0){.8889}}
\put(60.205,11.982){\line(-1,0){.8889}}
\put(58.427,11.982){\line(-1,0){.8889}}
\put(56.649,11.982){\line(-1,0){.8889}}
\put(54.871,11.982){\line(-1,0){.8889}}
\put(69.982,9.982){\line(-1,0){.9412}}
\put(68.1,9.982){\line(-1,0){.9412}}
\put(66.218,9.982){\line(-1,0){.9412}}
\put(64.335,9.982){\line(-1,0){.9412}}
\put(62.453,9.982){\line(-1,0){.9412}}
\put(60.571,9.982){\line(-1,0){.9412}}
\put(58.688,9.982){\line(-1,0){.9412}}
\put(56.806,9.982){\line(-1,0){.9412}}
\put(54.924,9.982){\line(-1,0){.9412}}
\put(53,3){\line(1,0){20}}
\end{picture}
\end{figure*}
\begin{figure}[ht]
\unitlength 2mm
\linethickness{0.2pt}
\ifx\plotpoint\undefined\newsavebox{\plotpoint}\fi 
\begin{picture}(79,27.625)(0,0)
\put(6,21){\circle*{1.061}}
\put(19,17){\circle*{1.061}}
\put(50,20){\circle*{1.061}}
\multiput(3,18)(.03370787,-.03370787){89}{\line(0,-1){.03370787}}
\put(6,15){\line(0,1){6}}
\multiput(6,21)(.03333333,-.03333333){60}{\line(0,-1){.03333333}}
\put(8,19){\line(0,1){4}}
\put(8,23){\line(1,-1){11}}
\put(19,12){\line(0,1){5}}
\multiput(19,17)(.03333333,-.03333333){60}{\line(0,-1){.03333333}}
\put(21,15){\line(0,1){5}}
\multiput(21,20)(.03370787,-.03370787){89}{\line(0,-1){.03370787}}
\put(24,17){\line(0,1){6}}
\put(24,23){\line(1,-1){13}}
\put(37,10){\line(0,1){8}}
\multiput(37,18)(.03333333,-.03333333){60}{\line(0,-1){.03333333}}
\put(39,16){\line(0,1){7}}
\put(39,23){\line(1,-1){11}}
\put(50,12){\line(0,1){8}}
\put(50,20){\line(1,-1){6}}
\put(56,14){\line(0,1){5}}
\multiput(56,19)(.03370787,-.03370787){89}{\line(0,-1){.03370787}}
\put(59,16){\line(0,1){4}}
\put(59,20){\line(1,-1){17}}
\put(3,3){\vector(0,1){23}}
\multiput(2.93,22.93)(.984127,0){64}{{\rule{.4pt}{.4pt}}}
\put(1.625,23.25){\makebox(0,0)[cc]{\scriptsize $t$}}
\put(2,3){\vector(1,0){77}}
\put(6,2.5){\line(0,1){1}}
\put(8,2.5){\line(0,1){1}}
\put(19,2.5){\line(0,1){1}}
\put(24,2.5){\line(0,1){1}}
\put(39,2.5){\line(0,1){1}}
\put(50,2.5){\line(0,1){1}}
\put(76,2.5){\line(0,1){1}}
\put(3,1){\makebox(0,0)[cc]{\scriptsize $0$}}
\put(7.625,27.625){\makebox(0,0)[cc]{\scriptsize c)}}
\end{picture}
\caption{The transmission tree. a) Binary tree with marks (detection events); b) its reduced tree and c) its contour process.}
\label{fig:markedjcp}
\end{figure}
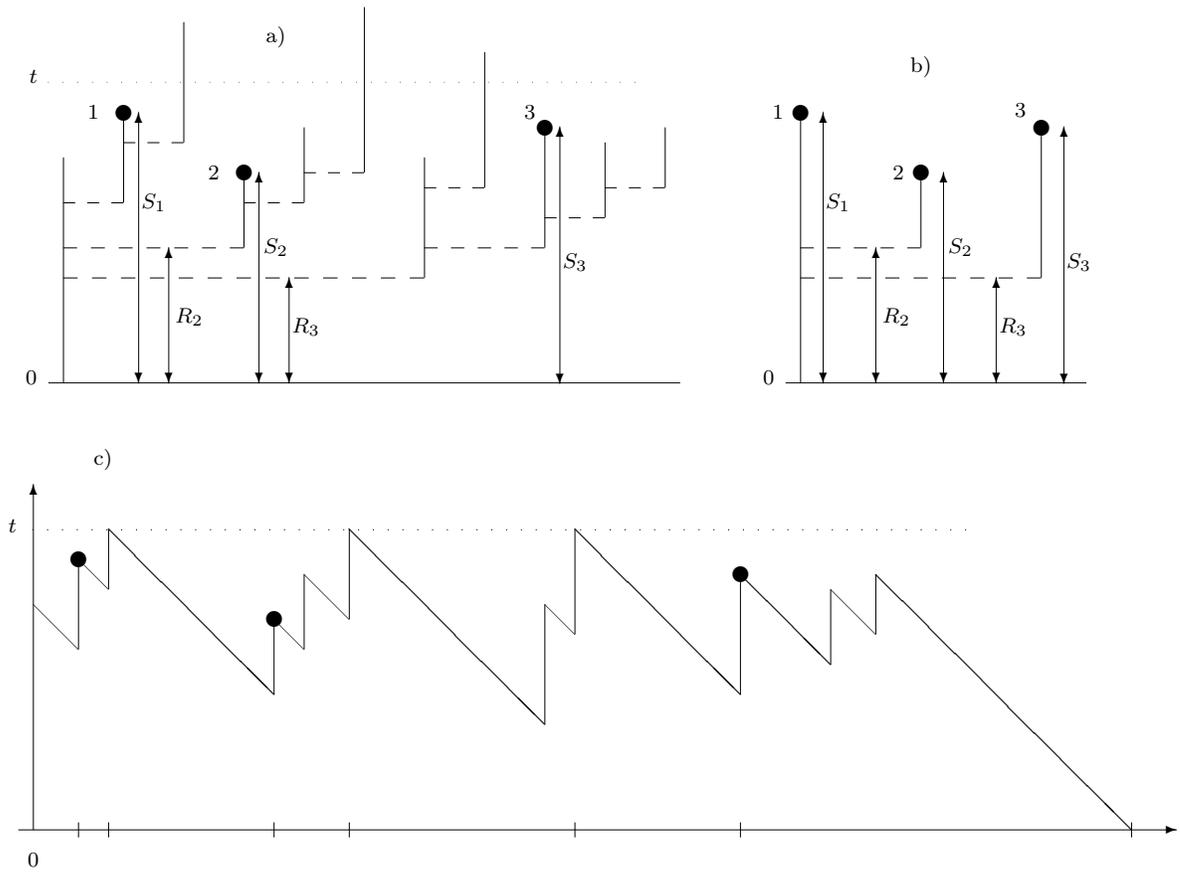
\subsubsection{The state of the epidemics at first detection}

We have considered  the same setting in \citet{LT13}, but for a different question, namely the structure of the epidemic at the first detection time. This question arises in the case of hospital-borne diseases due for example to bacterial antibiotic resistance. In this situation, everything can be known about all carriers of the disease, but only after the first detection of a case. At this random time, denoted $T$, everybody in the hospital is scanned and infected individuals are identified. Here the phylogeny of pathogens is not assumed to be known but patients' data like entrance dates or durations of stays are precisely known.

We assume that patients have i.i.d lengths of stay in the hospital, all distributed as some r.v. $K$. Conditional on infection, the length of stay of a patient is supposed to be a size-biased version of $K$. Finally, the transmission rate is $b$ and the detection rate per patient is denoted $\delta$. 

For individual $i$, set 
\begin{itemize}
\item $U_i:=$ time elapsed from entrance of the hospital up to infection
\item $A_i:=$ time elapsed from infection up to $T$
\item $R_i:=$ residual lifetime in the hospital after $T$.
\end{itemize}
See Fig \ref{fig:uar} for an example.
Set $m:=\EE(K)$ and let $\phi$ denote the inverse of the convex function 
$$
x\mapsto x - \frac{b}{m}\int_{(0,\infty]}(1-e^{-xy})\,\PP(K>y)\, dy.
$$
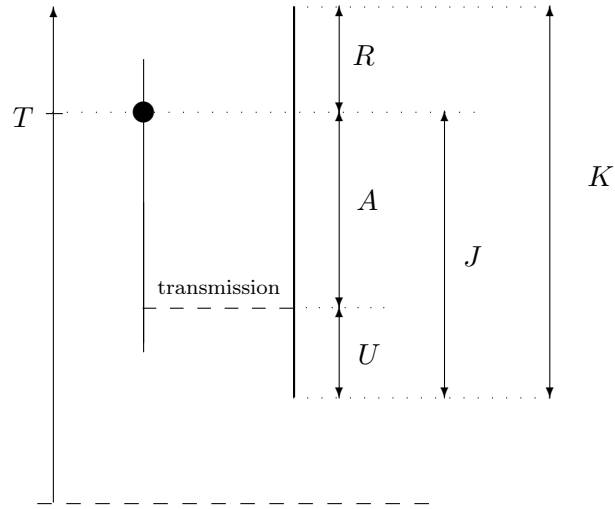
\begin{figure}[ht]
\centering
\unitlength 2mm 
\linethickness{0.4pt}
\ifx\plotpoint\undefined\newsavebox{\plotpoint}\fi 
\begin{picture}(38.5,35)(0,2)
\put(1,27.625){\makebox(0,0)[cc]{$T$}}
\put(2.5,27.875){\line(1,0){1.125}}
\put(3,2){\vector(0,1){33}}
\put(1.93,1.93){\line(1,0){.9643}}
\put(3.858,1.93){\line(1,0){.9643}}
\put(5.787,1.93){\line(1,0){.9643}}
\put(7.715,1.93){\line(1,0){.9643}}
\put(9.644,1.93){\line(1,0){.9643}}
\put(11.573,1.93){\line(1,0){.9643}}
\put(13.501,1.93){\line(1,0){.9643}}
\put(15.43,1.93){\line(1,0){.9643}}
\put(17.358,1.93){\line(1,0){.9643}}
\put(19.287,1.93){\line(1,0){.9643}}
\put(21.215,1.93){\line(1,0){.9643}}
\put(23.144,1.93){\line(1,0){.9643}}
\put(25.073,1.93){\line(1,0){.9643}}
\put(27.001,1.93){\line(1,0){.9643}}
\put(8.93,14.93){\line(1,0){.9091}}
\put(10.748,14.93){\line(1,0){.9091}}
\put(12.566,14.93){\line(1,0){.9091}}
\put(14.384,14.93){\line(1,0){.9091}}
\put(16.202,14.93){\line(1,0){.9091}}
\put(18.021,14.93){\line(1,0){.9091}}
\put(19,9){\line(0,1){26}}
\put(9,22){\line(0,-1){9.375}}
\put(22,28){\vector(0,-1){.07}}\put(22,35){\vector(0,1){.07}}\put(22,35){\line(0,-1){7}}
\put(22,15){\vector(0,-1){.07}}\put(22,28){\vector(0,1){.07}}\put(22,28){\line(0,-1){13}}
\put(22,9){\vector(0,-1){.07}}\put(22,15){\vector(0,1){.07}}\put(22,15){\line(0,-1){6}}
\put(14,16.375){\makebox(0,0)[cc]{\scriptsize transmission}}
\put(24,11.875){\makebox(0,0)[cc]{$U$}}
\put(23.875,22.125){\makebox(0,0)[cc]{$A$}}
\put(23.625,31.75){\makebox(0,0)[cc]{$R$}}
\put(9,12){\line(0,1){19.5}}
\put(9,28){\circle*{1.414}}
\thicklines
\put(19,35){\line(0,-1){26}}
\thinlines
\multiput(18.93,34.93)(.94444,0){19}{{\rule{.4pt}{.4pt}}}
\multiput(18.93,8.93)(.94444,0){19}{{\rule{.4pt}{.4pt}}}
\put(29,9){\vector(0,-1){.07}}\put(29,28){\vector(0,1){.07}}\put(29,28){\line(0,-1){19}}
\put(36,9){\vector(0,-1){.07}}\put(36,35){\vector(0,1){.07}}\put(36,35){\line(0,-1){26}}
\multiput(2.93,27.93)(.96552,0){30}{{\rule{.4pt}{.4pt}}}
\multiput(18.93,14.93)(.85714,0){8}{{\rule{.4pt}{.4pt}}}
\put(30.25,18.375){\makebox(0,0)[lc]{$J$}}
\put(38.5,23.75){\makebox(0,0)[lc]{$K$}}
\end{picture}
\caption{The structure of the stay in the hospital of an infected patient. Some (other) patient is detected at time $T$ (random). The focal patient has total duration of stay $K=U+A+R$ (see text). }
\label{fig:uar}
\end{figure}
\noindent
Using Vervaat's transform applied to the path of the contour process, we were able to show that conditional on $N_T=n$, the triples $(U_i, A_i, R_i)$ of the $n$ infectives at time $T$ are i.i.d. distributed as 
\begin{multline*}
\EE(f(U,A,R)) =\\
\frac{b}{m}\,\frac{\phi(\delta)}{\phi(\delta)-\delta} \ \int_{u=0}^\infty du \int_{a=0}^\infty da \int_{z=u+a}^\infty \PP(K\in dz) \,e^{-\phi(\delta)a}\, f(u,a,z-u-a),
\end{multline*}
In particular, the times $J_i=U_i+A_i$ spent in the hospital up to time $T$ are i.i.d. distributed as the r.v.\ $J$
$$
\PP(J\in dy) = \frac{b/m}{\phi(\delta)-\delta}\  \PP(K>y) \,\big(1-e^{-\phi(\delta)y}\big)\, dy.
$$
The last formulae will allow us to infer the dynamical characteristics of hospital-borne disease epidemics from hospital data. This contrasts with the fact that inference is impossible from the sole numbers of cases found upon detection  (\citealt{TB09}), due to the geometric distribution of $N_T$ (recall however that here $T$ is random).

\subsubsection{Mutations}

Marks on a tree can also be used to model mutations. In population genetics, it is standard to assume that each new mutation occurs at a new site of the DNA sequence, the infinitely-many-sites assumption. The list of mutated sites of a sequence is called allele. One of the fundamental questions in population genetics is to interpret genetic data such as the number of individuals in a population carrying a specific mutation or a specific allele. Reciprocally, the number of mutations, or of alleles, carried by $k$ individuals in the population is called frequency spectrum by sites, or by alleles.

The frequency spectrum of neutral mutations (that is, mutations with no influence on the population dynamics) has been extensively studied for random genealogies arising from models with constant population size, culminating in so-called Ewens' sampling formula (\citealt{Ewe72}).
In a series of recent papers relying heavily on contour techniques, we have studied the frequency spectrum in branching genealogies (\citealt{Lam09, Lam11, CL12, CL13, R14, DAL16}).

\section{Perspectives}

For who has understood how to identify which forward-in-time processes generate trees whose reduced tree is a coalescent point process, and has learnt the procedure of characterizing $W$ and the coalescent density from the model ingredients, coalescent point processes are a very convenient tool:
\begin{itemize}
\item
They arise in a wide class of models;
\item
They generate robust patterns, that in particular are invariant under incomplete sampling and under the action of bottlenecks;
\item
The reconstructed tree has a particularly simple distribution, which is extremely fast to simulate, in contrast with the entire forward-in-time process, that may even not be Markovian;
\item
The inference of model parameters from the knowledge of the reconstructed tree can be done using low-tech statistical methods.
\end{itemize}
On the other hand, CPPs also have a number of shortcomings:
\begin{itemize}
\item The models in which CPP arise exclude some interesting features from the modeling point of view, in particular the trait/age-dependence of birth rates;
\item Among the robust patterns they generate, the shape of the reduced tree is always ERM, which is certainly not the rule in empirical genealogies/phylogenies;
\end{itemize} 
Currently, one of our main lines of research (mine but more generally that of the SMILE group \emph{-- Stochastic models for the inference of life evolution}, UPMC \& Collège de France) is to produce and study models that 
\begin{enumerate}
\item are grounded on the microscopic description of individuals, either at the ecological scale or at the genetic scale;
\item feature a small number of parameters, which can nevertheless be tuned so as to generate a wide range of different patterns, when the corresponding empirical patterns vary across datasets (e.g., species abundance distributions); 
\item generate robust patterns when the corresponding empirical patterns are well conserved across datasets (e.g., the MLE of $\beta$ in empirical phylogenies, that revolves around $-1$);
\item produce observable statistics (e.g., reconstructed trees) with computable likelihoods.  
\end{enumerate}
\noindent

Criterion 3 is in general difficult to satisfy, especially simultaneously with 2.


\bibliographystyle{apalike}
\bibliography{FromZotero}

\begin{thebibliography}{}

\bibitem[Aldous, 1991]{Ald91}
Aldous, D. (1991).
\newblock The {Continuum} {Random} {Tree}. {I}.
\newblock {\em The Annals of Probability}, 19(1):1--28.

\bibitem[Aldous, 1993]{Ald93}
Aldous, D. (1993).
\newblock The {Continuum} {Random} {Tree} {III}.
\newblock {\em The Annals of Probability}, 21(1):248--289.

\bibitem[Aldous, 1996]{Ald96}
Aldous, D. (1996).
\newblock Probability {Distributions} on {Cladograms}.
\newblock In Friedman, A., Miller, W., Aldous, D., and Pemantle, R., editors,
  {\em Random {Discrete} {Structures}}, volume~76, pages 1--18. Springer New
  York, New York, NY.

\bibitem[Aldous and Popovic, 2005]{AP05}
Aldous, D. and Popovic, L. (2005).
\newblock A critical branching process model for biodiversity.
\newblock {\em Advances in Applied Probability}, 37(4):1094--1115.

\bibitem[Aldous, 2001]{Ald01}
Aldous, D.~J. (2001).
\newblock Stochastic {models} and {descriptive} {statistics} for {phylogenetic}
  {trees}, from {Yule} to {today}.
\newblock {\em Statistical Science}, 16(1):23--34.

\bibitem[Barth{\'e}l{\'e}my and Gu{\'e}noche, 1991]{BGBook}
Barth{\'e}l{\'e}my, J.-P. and Gu{\'e}noche, A. (1991).
\newblock {\em Trees and Proximity Representations}.
\newblock John Wiley \& Sons.

\bibitem[Bertoin, 1996]{BerBook1}
Bertoin, J. (1996).
\newblock {\em Lévy processes}, volume 121 of {\em Cambridge {Tracts} in
  {Mathematics}}.
\newblock Cambridge University Press, Cambridge.

\bibitem[Bertoin, 2006]{BerBook2}
Bertoin, J. (2006).
\newblock {\em Random fragmentation and coagulation processes}, volume 102 of
  {\em Cambridge {Studies} in {Advanced} {Mathematics}}.
\newblock Cambridge University Press, Cambridge.

\bibitem[Blum and Fran{\c{c}}ois, 2006]{BF06}
Blum, M.~G. and Fran{\c{c}}ois, O. (2006).
\newblock Which random processes describe the tree of life? {A} large-scale
  study of phylogenetic tree imbalance.
\newblock {\em Systematic Biology}, 55(4):685--691.

\bibitem[Brown, 1994]{Bro94}
Brown, J. K.~M. (1994).
\newblock Probabilities of {evolutionary} {trees}.
\newblock {\em Systematic Biology}, 43(1):78--91.

\bibitem[Burago et~al., 2001]{BBIBook}
Burago, D., Burago, Y., and Ivanov, S. (2001).
\newblock {\em A Course in Metric Geometry}, volume~33.
\newblock American Mathematical Society Providence.

\bibitem[Champagnat and Lambert, 2012]{CL12}
Champagnat, N. and Lambert, A. (2012).
\newblock Splitting trees with neutral {Poissonian} mutations {I}: {Small}
  families.
\newblock {\em Stochastic Processes and their Applications}, 122(3):1003--1033.

\bibitem[Champagnat and Lambert, 2013]{CL13}
Champagnat, N. and Lambert, A. (2013).
\newblock Splitting trees with neutral {Poissonian} mutations {II}: {Largest}
  and oldest families.
\newblock {\em Stochastic Processes and their Applications}, 123(4):1368--1414.

\bibitem[Delaporte et~al., 2016]{DAL16}
Delaporte, C., Achaz, G., and Lambert, A. (2016).
\newblock Mutational pattern of a sample from a critical branching population.
\newblock {\em Journal of Mathematical Biology}, pages 1--38.

\bibitem[Dress et~al., 1996]{DMT96}
Dress, A., Moulton, V., and Terhalle, W. (1996).
\newblock T-theory: {An} {overview}.
\newblock {\em European Journal of Combinatorics}, 17(2--3):161--175.

\bibitem[Duquesne, 2006]{Duq06}
Duquesne, T. (2006).
\newblock The coding of compact real trees by real valued functions.
\newblock {\em arXiv:math/0604106}.
\newblock arXiv: math/0604106.

\bibitem[Duquesne and Le~Gall, 2002]{DLG02}
Duquesne, T. and Le~Gall, J.-F. (2002).
\newblock {\em Random trees, L{\'e}vy processes and spatial branching
  processes}, volume 281.
\newblock Soci{\'e}t{\'e} math{\'e}matique de France.

\bibitem[Etienne et~al., 2014]{EML14}
Etienne, R.~S., Morlon, H., and Lambert, A. (2014).
\newblock Estimating the duration of speciation from phylogenies.
\newblock {\em Evolution}, 68(8):2430--2440.

\bibitem[Etienne and Rosindell, 2012]{ER12}
Etienne, R.~S. and Rosindell, J. (2012).
\newblock Prolonging the {past} {counteracts} the {pull} of the {present}:
  {protracted} {speciation} {can} {explain} {observed} {slowdowns} in
  {diversification}.
\newblock {\em Systematic Biology}, 61(2):204--213.

\bibitem[Evans, 2008]{EvaBook}
Evans, S.~N. (2008).
\newblock {\em Probability and Real Trees: {\'E}cole d'{é}t{\'e} de
  Probabilit{\'e}s de Saint-Flour XXXV-2005}.
\newblock Springer.

\bibitem[Evans et~al., 2005]{EPW05}
Evans, S.~N., Pitman, J., and Winter, A. (2005).
\newblock Rayleigh processes, real trees, and root growth with re-grafting.
\newblock {\em Probability Theory and Related Fields}, 134(1):81--126.

\bibitem[Ewens, 1972]{Ewe72}
Ewens, W.~J. (1972).
\newblock The sampling theory of selectively neutral alleles.
\newblock {\em Theoretical Population Biology}, 3:87--112; erratum, ibid. 3
  (1972), 240; erratum, ibid. 3 (1972), 376.

\bibitem[Geiger, 1996]{Gei96}
Geiger, J. (1996).
\newblock Size-biased and conditioned random splitting trees.
\newblock {\em Stochastic Processes and their Applications}, 65(2):187--207.

\bibitem[Geiger and Kersting, 1997]{GK97}
Geiger, J. and Kersting, G. (1997).
\newblock Depth-first search of random trees, and {Poisson} point processes.
\newblock In {\em Classical and modern branching processes ({Minneapolis},
  {MN}, 1994)}, volume~84 of {\em {IMA} {Vol}. {Math}. {Appl}.}, pages
  111--126. Springer, New York.

\bibitem[Haas, 2016]{H16}
Haas, B. (2016).
\newblock Scaling limits of {M}arkov-{B}ranching trees and applications.
\newblock {\em Eprint arXiv:1605.07873}.

\bibitem[Haas et~al., 2008]{HMPW}
Haas, B., Miermont, G., Pitman, J., and Winkel, M. (2008).
\newblock Continuum tree asymptotics of discrete fragmentations and
  applications to phylogenetic models.
\newblock {\em The Annals of Probability}, 36(5):1790--1837.

\bibitem[Hagen et~al., 2015]{HHS15}
Hagen, O., Hartmann, K., Steel, M., and Stadler, T. (2015).
\newblock Age-{dependent} {speciation} {can} {explain} the {shape} of
  {empirical} {phylogenies}.
\newblock {\em Systematic Biology}, 64(3):432--440.

\bibitem[Harding, 1971]{Har71}
Harding, E.~F. (1971).
\newblock The {probabilities} of {rooted} {tree}-{shapes} {generated} by
  {random} {bifurcation}.
\newblock {\em Advances in Applied Probability}, 3(1):44--77.

\bibitem[Jetz et~al., 2012]{JTJ12}
Jetz, W., Thomas, G.~H., Joy, J.~B., Hartmann, K., and Mooers, A.~O. (2012).
\newblock The global diversity of birds in space and time.
\newblock {\em Nature}, 491(7424):444--448.

\bibitem[Kingman, 1982]{Kin82}
Kingman, J. (1982).
\newblock The coalescent.
\newblock {\em Stochastic processes and their applications}, 13(3):235--248.

\bibitem[Knuth, 1997]{KnuBook}
Knuth, D.~E. (1997).
\newblock {\em The Art of Computer Programming}.
\newblock Addison-Wesley.

\bibitem[Kyprianou, 2006]{KypBook}
Kyprianou, A.~E. (2006).
\newblock {\em Introductory lectures on fluctuations of {Lévy} processes with
  applications}.
\newblock Universitext. Springer-Verlag, Berlin.

\bibitem[Lambert, 2008]{Lam08}
Lambert, A. (2008).
\newblock Population {Dynamics} and {Random} {Genealogies}.
\newblock {\em Stochastic Models}, 24(suppl. 1):45--163.

\bibitem[Lambert, 2009]{Lam09}
Lambert, A. (2009).
\newblock The allelic partition for coalescent point processes.
\newblock {\em Markov Processes and Related Fields}, 15(3):359--386.

\bibitem[Lambert, 2010]{Lam10}
Lambert, A. (2010).
\newblock The contour of splitting trees is a {Lévy} process.
\newblock {\em The Annals of Probability}, 38(1):348--395.

\bibitem[Lambert, 2011]{Lam11}
Lambert, A. (2011).
\newblock Species abundance distributions in neutral models with immigration or
  mutation and general lifetimes.
\newblock {\em Journal of Mathematical Biology}, 63(1):57--72.

\bibitem[Lambert et~al., 2014a]{LAS14}
Lambert, A., Alexander, H.~K., and Stadler, T. (2014a).
\newblock Phylogenetic analysis accounting for age-dependent death and sampling
  with applications to epidemics.
\newblock {\em Journal of Theoretical Biology}, 352:60--70.

\bibitem[Lambert et~al., 2014b]{LME14}
Lambert, A., Morlon, H., and Etienne, R.~S. (2014b).
\newblock The reconstructed tree in the lineage-based model of protracted
  speciation.
\newblock {\em Journal of Mathematical Biology}, 70(1-2):367--397.

\bibitem[Lambert and Popovic, 2013]{LP13}
Lambert, A. and Popovic, L. (2013).
\newblock The coalescent point process of branching trees.
\newblock {\em Annals of Applied Probability}, 23(1):99--144.

\bibitem[Lambert et~al., 2013]{LSZ13}
Lambert, A., Simatos, F., and Zwart, B. (2013).
\newblock Scaling limits via excursion theory: {Interplay} between
  {Crump}-{Mode}-{Jagers} branching processes and {Processor}-{Sharing} queues.
\newblock {\em The Annals of Applied Probability}, 23(6):2357--2381.

\bibitem[Lambert and Stadler, 2013]{LS13}
Lambert, A. and Stadler, T. (2013).
\newblock Birth--death models and coalescent point processes: {The} shape and
  probability of reconstructed phylogenies.
\newblock {\em Theoretical Population Biology}, 90:113--128.

\bibitem[Lambert and Steel, 2013]{LSt13}
Lambert, A. and Steel, M. (2013).
\newblock Predicting the loss of phylogenetic diversity under non-stationary
  diversification models.
\newblock {\em Journal of Theoretical Biology}, 337:111--124.

\bibitem[Lambert and Trapman, 2013]{LT13}
Lambert, A. and Trapman, P. (2013).
\newblock Splitting trees stopped when the first clock rings and {Vervaat}'s
  transformation.
\newblock {\em Journal of Applied Probability}, 50(1):208--227.

\bibitem[Lambert and Uribe~Bravo, 2016a]{LUB16}
Lambert, A. and Uribe~Bravo, G. (2016a).
\newblock The comb representation of compact ultrametric spaces.
\newblock {\em Eprint arXiv:1602.08246}.

\bibitem[Lambert and Uribe~Bravo, 2016b]{LUB15}
Lambert, A. and Uribe~Bravo, G. (2016b).
\newblock Totally ordered, measured trees and splitting trees with infinite
  variation.
\newblock {\em Eprint arXiv:1607.02114}.

\bibitem[Le~Gall, 1993]{LeG93}
Le~Gall, J.-F. (1993).
\newblock The uniform random tree in a {Brownian} excursion.
\newblock {\em Probability Theory and Related Fields}, 96(3):369--383.

\bibitem[Le~Gall, 2005]{LeG05}
Le~Gall, J.-F. (2005).
\newblock Random trees and applications.
\newblock {\em Probability Surveys, Vol. 2, 2005}.

\bibitem[Le~Gall and Miermont, 2012]{LGM12}
Le~Gall, J.-F. and Miermont, G. (2012).
\newblock Scaling limits of random trees and planar maps.
\newblock In Ellwood, D., editor, {\em Probability and {Statistical} {Physics}
  in {Two} and {More} {Dimensions}: {Proceedings} of the {Clay} {Mathematics}
  {Institute} {Summer} {School} and {XIV} {Brazilian} {School} of
  {Probability}, 2010}. American Mathematical Society.

\bibitem[Manceau et~al., 2015]{MLM15}
Manceau, M., Lambert, A., and Morlon, H. (2015).
\newblock Phylogenies support out-of-equilibrium models of biodiversity.
\newblock {\em Ecology Letters}, 18(4):347--356.

\bibitem[Mooers et~al., 2012]{MGS12}
Mooers, A., Gascuel, O., Stadler, T., Li, H., and Steel, M. (2012).
\newblock Branch {lengths} on {birth}--{death} {trees} and the {expected}
  {loss} of {phylogenetic} {diversity}.
\newblock {\em Systematic Biology}, 61(2):195--203.

\bibitem[Murtagh, 1984]{Mur84}
Murtagh, F. (1984).
\newblock Counting dendrograms: A survey.
\newblock {\em Discrete Applied Mathematics}, 7(2):191--199.

\bibitem[Nee, 2006]{Nee06}
Nee, S. (2006).
\newblock Birth-death models in macroevolution.
\newblock {\em Annual Review of Ecology, Evolution and Systematics}, 37:1--17.

\bibitem[Nee et~al., 1994]{NMH94}
Nee, S., May, R., and Harvey, P. (1994).
\newblock The reconstructed evolutionary process.
\newblock {\em Philosophical Transactions of the Royal Society of London.
  Series B: Biological Sciences}, 344(1309):305--311.

\bibitem[Nee and May, 1997]{NM97}
Nee, S. and May, R.~M. (1997).
\newblock Extinction and the {loss} of {evolutionary} {history}.
\newblock {\em Science}, 278(5338):692--694.

\bibitem[Paulin, 1989]{Pau89}
Paulin, F. (1989).
\newblock The {Gromov} topology on {$R$}-trees.
\newblock {\em Topology and its Applications}, 32(3):197--221.

\bibitem[Pitman, 2006]{PitBook}
Pitman, J. (2006).
\newblock {\em Combinatorial {Stochastic} {Processes}: {\'Ecole} d'{été} de
  {Probabilités} de {Saint}-{Flour} {XXXII} - 2002}.
\newblock Springer.

\bibitem[Popovic, 2004]{Pop04}
Popovic, L. (2004).
\newblock Asymptotic genealogy of a critical branching process.
\newblock {\em Annals of Applied Probability}, pages 2120--2148.

\bibitem[Richard, 2014]{R14}
Richard, M. (2014).
\newblock Splitting trees with neutral mutations at birth.
\newblock {\em Stochastic Processes and their Applications}, 124(10):3206 --
  3230.

\bibitem[Semple and Steel, 2003]{SSBook}
Semple, C. and Steel, M.~A. (2003).
\newblock {\em Phylogenetics}, volume~24.
\newblock Oxford University Press.

\bibitem[Slowinski, 1990]{Slo90}
Slowinski, J.~B. (1990).
\newblock Probabilities of $n$-trees under two models: a demonstration that
  asymmetrical interior nodes are not improbable.
\newblock {\em Systematic Biology}, 39(1):89--94.

\bibitem[Stadler, 2010]{Sta10}
Stadler, T. (2010).
\newblock Sampling-through-time in birth--death trees.
\newblock {\em Journal of Theoretical Biology}, 267(3):396--404.

\bibitem[Stadler, 2011]{Sta11}
Stadler, T. (2011).
\newblock Mammalian phylogeny reveals recent diversification rate shifts.
\newblock {\em Proceedings of the National Academy of Sciences},
  108(15):6187--6192.

\bibitem[Stanley, 1999]{StaBook}
Stanley, R.~P. (1999).
\newblock {\em Enumerative Combinatorics. Vol. 2, volume 62 of Cambridge
  Studies in Advanced Mathematics}.
\newblock Cambridge University Press, Cambridge.

\bibitem[Trapman and Bootsma, 2009]{TB09}
Trapman, P. and Bootsma, M. C.~J. (2009).
\newblock A useful relationship between epidemiology and queueing theory: {The}
  distribution of the number of infectives at the moment of the first
  detection.
\newblock {\em Mathematical Biosciences}, 219(1):15--22.

\end{thebibliography}

\end{document}